\documentclass[cupthm,crop,info]{CUP-JNL-ETS}%

\usepackage{graphicx}
\usepackage{multicol,multirow}
\usepackage{amsmath,amssymb,amsfonts}
\usepackage{mathrsfs}
\usepackage{rotating}
\usepackage{appendix}
\usepackage[numbers]{natbib}
\usepackage{amsthm}
\usepackage{lipsum}
\usepackage{mathtools}

\usepackage{tikz}
\usetikzlibrary[calc, math, decorations.pathreplacing]

\theoremstyle{cupplain}
\newtheorem{theorem}{Theorem}[section]
\newtheorem{lemma}[theorem]{Lemma}
\newtheorem{corollary}[theorem]{Corollary}
\theoremstyle{cupdefinition}
\newtheorem{definition}{Definition}[section]
\theoremstyle{cupremark}
\newtheorem{remark}[theorem]{Remark}

\theoremstyle{cupproof}
\newtheorem{proof}{Proof}

\numberwithin{equation}{section}

\usepackage{xcolor}
\usepackage[utf8]{inputenc}
\usepackage{hyperref}
\usepackage{enumitem}
\usepackage{soul}
\usepackage{cancel}

\long\def\MSC#1\EndMSC{\def\arg{#1}\ifx\arg\empty\relax\else {\narrower\noindent%
{2010 Mathematics Subject Classification}: #1\\} \fi}
\long\def\PACS#1\EndPACS{\def\arg{#1}\ifx\arg\empty\relax\else
     {\narrower\noindent%
{PACS numbers}: #1}\fi}
\long\def\KEY#1\EndKEY{\def\arg{#1}\ifx\arg\empty\relax\else
	{\narrower\noindent%
Keywords: #1\\}\fi}

\newcommand{\Z}{\mathbb{Z}}
\newcommand{\R}{\mathbb{R}}
\newcommand{\C}{\mathbb{C}}

\newcommand{\bj}{\mathbf{j}}
\newcommand{\bk}{\mathbf{k}}

\newcommand{\W}{\mathscr{P}}

\renewcommand{\e}{\mathrm{e}}

\def\bb1{{\rm{1}\hspace{-3pt}\mathbf{l}}}

\renewcommand{\d}{\;\mathrm{d}} 

\renewcommand{\epsilon}{\varepsilon}
\renewcommand{\phi}{\varphi}

\artid{}
\jyear{}
\jvol{}
\jissue{}
\jdoi{}
\begin{document}

\begin{Frontmatter}

\title[Beta-expansions and sharp iteration asymptotics]{Beta-expansions induced by general Parry polynomials and sharp iteration asymptotics of their transfer operators}







\author{\gname{Horia D.} \sname{Cornean*}}
\author{\gname{Ira W.} \sname{Herbst**}}
\author{\gname{Kasper S.} \sname{Sørensen*}}

\address{\orgdiv{*Department of Mathematical Sciences}, \orgname{Aalborg University}, \orgaddress{\city{Aalborg},  \country{Denmark}} 
\orgdiv{**Department of Mathematics}, \orgname{University of Virginia}, \orgaddress{\city{Charlottesville},  \country{USA}} \\
(\email{cornean@math.aau.dk},\email{iwh@virginia.edu},\email{kasper@math.aau.dk})}

\Received{\textup{XX} MONTH \textup{XX}}
\Accepted[and accepted in revised form]{\textup{XX} MONTH \textup{XXX}}

\maketitle
\thispagestyle{empty}
\tableofcontents

\begin{abstract}
This paper generalizes the results in \cite{CHM,CS, HMS, HMS2} to any Parry polynomials $p_n(z)$ \cite{Pa} of finite degree $n\geq 2$. Let $\beta>1$ be the unique positive root of $p_n$ and consider the transfer operator $\mathscr{P}$ induced by the map $[0,1]\ni x\mapsto \beta x-\lfloor \beta x\rfloor \in [0,1)$. Suppose all other roots of $p_n$ have magnitude less than one. Then for any smooth function $F$ on $[0,1]$ we prove two term asymptotics in  $L^\infty$ of the form 
\begin{equation*}
    \big (\mathscr{P}^r(F)\big )(\cdot)= u(\cdot) \int_0^1 F(x)\d x +\beta^{-r}v(\cdot) \big (F(1)-F(0)\big )+o(\beta^{-r}),\quad r\geq 1,
\end{equation*}
where $u$ is piecewise constant and $\mathscr{P}u=u$, while $v$ is piecewise equal to a polynomial of degree one and $\mathscr{P}v= \beta^{-1}v$. 

When $\W$ is restricted to functions of bounded variation $BV([0,1])$, it is known  \cite{HK', HK, K, LY} that $\vert z \vert\leq \beta^{-1}$ contains its essential spectrum. Thus the contribution coming from $v$ in the above estimate is not guaranteed if $F$ is not smooth enough. 

Furthermore, we show that when $\W$ is restricted to $BV([0,1])$ then any $\vert z \vert<\beta^{-1}$ is an eigenvalue with infinite multiplicity. Moreover, we construct a meromorphic extension of $(\W-z)^{-1}$ to the domain $\vert z \vert>\beta^{-1}$, where the finite rank poles are numbers $\zeta$ with $\beta^{-1}<\vert \zeta \vert\leq 1$ such that $\beta\zeta$ is a root of $p_n(z)$.  If such poles $\zeta$ other than $1$ exist, then we construct eigenvectors of $\W^*$ in $BV^*$ obeying $\W^*\hat{U}=\zeta \hat{U}$, { where $\hat{U}(F)$} is given by series involving the values of $F$ taken in some specially chosen, finer and finer partitions of $[0,1]$. When $F$ is either a polynomial or a characteristic function like $\chi_{[0,x]}$,  { then $\hat{U}(F)$} can be explicitly computed. The case where $F$ is a characteristic function has been treated in \cite{Su} with a different approach,  but the general construction of $\hat{U}$ is first given here. 
\end{abstract}

\keywords{Transfer Operator, Greedy $\beta$-expansion, Iteration Asymptotics, Bounded Variation}

\keywords[2020 Mathematics Subject Classification]{\codes[Primary]{37}\codes[Secondary]{39,40,47}}
\end{Frontmatter}

\section{Introduction}
\subsection{The setting}

 Let $n \in\{2,3,\ldots\}$, $a_i \in \mathbb{N}_0 = \{0,1,2,\ldots\}$ for $i \in \{0,1,\ldots,n-1\}$ and $a_0 a_{n-1}\geq 1$. 
 Consider the function 
\begin{align}\label{dec2}
    f(x) = 1 - \frac{a_0}{x} - \frac{a_1}{x^2} - \cdots - \frac{a_{n-1}}{x^{n}}.  
\end{align}
We have $f'(x) > 0$ for all $x > 0$, $f(a_0) < 0$ and $\lim_{x \to \infty} f(x) = 1$. This implies that the equation  
\begin{align}\label{eq:fundeqbeta}
    1 = \frac{a_0}{\beta} + \frac{a_1}{\beta^2} + \cdots + \frac{a_{n-1}}{\beta^n},\quad \beta>0,
\end{align}
has a unique solution.  In this context, we see that $\beta>1$ is the unique positive root of the degree $n$ polynomial with integer coefficients 
\begin{equation}\label{dec2'}
p(z)=z^n-a_0z^{n-1}-\cdots -a_{n-1}.
\end{equation}

Let $T_\beta(x) = x \beta - \lfloor x \beta \rfloor$, and assume that $\beta$ and the corresponding coefficients $\{a_j\}_{j=0}^{n-1}$ also satisfy the following constraints: 
\begin{align}\label{eq:a0a1akassump}
    a_0 = \lfloor \beta \rfloor, \quad a_1 = \lfloor \beta T_\beta(1) \rfloor, \quad \ldots, \quad a_{n-1} = \lfloor \beta T^{n-1}_\beta(1) \rfloor,\quad T^n_\beta(1)=0 .
\end{align}

We call such $a_j$'s a finite Parry sequence; Parry (see \cite{Pa}) found the necessary and sufficient conditions which the coefficients must satisfy in order to also obey \eqref{eq:fundeqbeta} and \eqref{eq:a0a1akassump}: denoting by $\succ$ the lexicographic ``greater than" order between two sequences, then these conditions are: 
\begin{align}\label{Parrycon}
    [a_0,a_1,\ldots,a_{n-1},0,\ldots ]\succ [a_{j},a_{j+1},\ldots,a_{n-1},0,\ldots ]\quad \text{for all}\quad {1\leq j\leq n-1}.
\end{align}

\begin{definition}\label{def1}
We say that a polynomial $p(z)$ as in \eqref{dec2'} is a Parry polynomial if the coefficients $\{a_j\}_{j=0}^{n-1}$ form a finite Parry sequence. We say that $p(z)$ is a Parry-Pisot polynomial if it is a Parry polynomial and the other $n-1$ roots of $p(z)$ besides $\beta$ belong to the open unit disk. 
\end{definition}
\begin{definition}
    We say that $\beta>1$ is a Pisot number if there exists a polynomial $p(z)=z^n+\sum_{k=1}^{n} c_k z^{n-k}$ with all $c_k\in \Z$,  such that $p(\beta)=0$ and all other roots of $p$ lie in the open unit disk. 
\end{definition}

\begin{remark}Note that a Parry polynomial may fail to be a Parry-Pisot polynomial although its corresponding $\beta$ is a Pisot number. This happens for example for the (reducible over integers) polynomial 
\begin{equation}\label{reduc}
z^5-z^4-1=(z^2-z+1)(z^3-z-1)
\end{equation}
where $n=5$, $a_0=a_4=1$ and $a_1=a_2=a_3=0$; this is a finite Parry sequence, see \eqref{dec6}. Here $\beta$ is the positive root of $z^3-z-1$, the so-called ``plastic constant", the smallest possible Pisot number. But $p(z)$ in \eqref{reduc} is not Parry-Pisot because two of its roots are $e^{\pm i\pi/3}$ and have length one and the other two roots of $z^3-z-1$ have length less than $1$. 
\end{remark}

For any $\beta>1$ coming from a finite Parry sequence we introduce the transfer (or Perron-Frobenius) operator $\mathscr{P}:L^p([0,1])\to L^p([0,1])$ with $1\leq p\leq \infty$ given by 
\begin{align}\label{eq:mathcalP}
    (\mathscr{P} f)(x) = \beta^{-1}\sum_{k=0}^{a_0} f\bigg( \frac{x+k}{\beta} \bigg) ,\quad 0\leq x\leq 1,
\end{align}
 Here we always assume $f(x) = 0$ if $x\notin [0,1]$.

Let us also introduce the composition (Koopman) operator $K:L^p([0,1])\to L^p([0,1])$ acting as
\begin{align}\label{koop}
    (Kf)(x) = f\big(T_\beta(x)\big)=f\big(\beta x - \lfloor \beta x \rfloor\big).
\end{align}
There is an important duality relation between these two operators. For all $g\in L^1([0,1])$ and $f\in L^\infty([0,1])$:
\begin{align}\label{dec12}
    \int_{[0,1]} g(x)\, (Kf)(x)\d x = \int_{[0,1]} (\mathscr{P}g)(x)\, f(x)\d x.
\end{align}

\subsection{More on the necessary and sufficient conditions for having a finite Parry sequence} 
Following \cite{Pa} we will give here a few more details on the necessary and sufficient  conditions which the coefficients must obey when they satisfy \eqref{eq:a0a1akassump} and \eqref{Parrycon}. 

The first equality in \eqref{eq:a0a1akassump} demands $a_0 < \beta < a_0+1$ (where $\beta > a_0$ holds due to \eqref{eq:fundeqbeta}). Furthermore, by $T_\beta : [0,1] \rightarrow [0,1)$ together with  \eqref{eq:fundeqbeta} and \eqref{eq:a0a1akassump}, we have for all $1\leq k\leq n-1$:
\begin{equation}\label{dec1}
\begin{aligned}
b_0&\coloneqq 1,\\
    b_1 &\coloneqq \frac{a_1}{\beta} + \cdots + \frac{a_{n-1}}{\beta^{n-1}} =T_\beta(1)= \beta - a_0  \in (0,1), \\
    b_2 &\coloneqq \frac{a_2}{\beta} + \cdots + \frac{a_{n-1}}{\beta^{n-2}} = T^2_\beta(1)=\beta^2 - \beta a_0 - a_1 \in (0,1), \\
    & \:\:\: \vdots \\
    b_{k} &\coloneqq \frac{a_{k}}{\beta} + \cdots + \frac{a_{n-1}}{\beta^{n-k}} = T^k_\beta(1)=\beta^{k} - \beta^{k-1} a_0 - \beta^{k-2} a_1 - \cdots - a_{k-1} \in (0,1).
\end{aligned}
\end{equation}

We gather a few important facts implied by \eqref{eq:a0a1akassump}.
\begin{lemma}\label{lemmaParry} 
    We have $b_j\neq b_k$ if $j\neq k$. 
\end{lemma}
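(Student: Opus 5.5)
We argue by contradiction, using only the orbit structure $b_k=T_\beta^k(1)$ from \eqref{dec1} together with $T_\beta^n(1)=0$. The indices range over $0\le j,k\le n-1$. Here $b_0=1$, and $b_k\in(0,1)$ for $1\le k\le n-1$.

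First, $b_0\neq b_k$ for $k\ge 1$, since $b_k<1=b_0$. So it suffices to treat $1\le j<k\le n-1$.

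Suppose $b_j=b_k$ with $1\le j<k\le n-1$, i.e. $T_\beta^j(1)=T_\beta^k(1)$. Apply $T_\beta^{\,n-k}$ to both sides, which is legitimate since $T_\beta$ is a genuine map on $[0,1]$. The right side becomes $T_\beta^n(1)=0$ by \eqref{eq:a0a1akassump}. The left side becomes $T_\beta^{\,n-(k-j)}(1)=b_{n-(k-j)}$. The index $m:=n-(k-j)$ satisfies $1\le m\le n-1$ because $1\le k-j\le n-2$. Hence $b_m=0$, contradicting $b_m\in(0,1)$ from \eqref{dec1}.

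The one point needing care is that $b_m\in(0,1)$, and in particular $b_m>0$. This follows from the series expression $b_m=\sum_{i=m}^{n-1}a_i\beta^{-(i-m+1)}$ in \eqref{dec1}: it is a sum of nonnegative terms containing $a_{n-1}\beta^{-(n-m)}$, and $a_{n-1}\ge1$ by the assumption $a_0a_{n-1}\ge1$. So $b_m>0$. Also $b_m<1$, since $b_m=T_\beta^m(1)\in[0,1)$. These inclusions are already recorded in \eqref{dec1}, so I would simply cite them.

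An alternative route, if one prefers to avoid iterating $T_\beta$, is to use the lexicographic condition \eqref{Parrycon} via the greedy digit expansions. Then $b_j=b_k$ would force the tails $[a_j,a_{j+1},\dots]$ and $[a_k,a_{k+1},\dots]$ to coincide, and such a shift-invariance is impossible for a finitely supported sequence whose last nonzero digit is $a_{n-1}$. I would go with the iteration argument, which is shorter. The main (minor) obstacle is just making sure the index bookkeeping keeps $m$ in $\{1,\dots,n-1\}$.
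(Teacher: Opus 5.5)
Your proof is correct and is essentially the paper's argument. Assuming $b_j=b_k$ with $j<k$, you apply $T_\beta^{\,n-k}$, so that one side becomes $T_\beta^n(1)=0$ while the other becomes $b_{n-(k-j)}$ with $n-(k-j)\le n-1$, which is nonzero; your justification of $b_m>0$ via the term $a_{n-1}\beta^{-(n-m)}$, and your separate treatment of $b_0$, just spell out what the paper leaves implicit.
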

\begin{proof}
If $j>k$ and $T^j(1) =T^k(1)$ then $T^{n-j}$ acting on $T^j(1)$ gives zero but acting on $T^k(1)$ does not, which leads to a contradiction. 
\end{proof}

\begin{lemma}\label{a0islarger}  Let $a_0\geq \max\{a_1,\ldots,a_{n-1}\}$ be non-negative integers with $a_0 a_{n-1}\geq 1$ but which do not necessarily obey \eqref{Parrycon}. 
Let $\beta $ be given by \eqref{eq:fundeqbeta}. 
Then $a_0<\beta<a_0+1$.     
 Moreover, if $2\leq n\leq 3$, then $a_0\geq \max\{a_1,\ldots,a_{n-1}\}$ is  also sufficient for the finite sequence to be Parry. 
\end{lemma}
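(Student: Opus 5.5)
We prove the bound $a_0<\beta<a_0+1$ using the monotone function $f$ from \eqref{dec2}, and then check \eqref{Parrycon} directly for $n=2,3$.

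\emph{The bound.} The function $f$ is strictly increasing on $(0,\infty)$, and $\beta$ is its unique zero. So it suffices to show $f(a_0)<0$ and $f(a_0+1)>0$. The first inequality is immediate: since $a_{n-1}\geq 1$,
$$f(a_0)=-\sum_{j\geq 1}a_j a_0^{-j-1}<0.$$
For the second we use $a_j\leq a_0$ and sum the geometric series:
\begin{equation*}
\sum_{j=0}^{n-1}\frac{a_j}{(a_0+1)^{j+1}}\leq a_0\sum_{j=1}^{n}(a_0+1)^{-j}=1-(a_0+1)^{-n}<1 .
\end{equation*}
Hence $f(a_0+1)>0$, and therefore $a_0<\beta<a_0+1$.

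\emph{Parry condition for $n=2$.} We must check $[a_0,a_1,0,\dots]\succ[a_1,0,\dots]$. If $a_1<a_0$, the first entries already decide it. If $a_1=a_0$, the second entries are $a_1\geq 1$ versus $0$, so the inequality holds.

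\emph{Parry condition for $n=3$, shift $j=2$.} We compare $[a_0,a_1,a_2,0,\dots]$ with $[a_2,0,\dots]$. If $a_2<a_0$ we are done. If $a_2=a_0$, we compare $a_1$ with $0$; this is strict unless $a_1=0$. In that case we compare $a_2\geq1$ with $0$, which is strict.

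\emph{Parry condition for $n=3$, shift $j=1$.} We compare $[a_0,a_1,a_2,0,\dots]$ with $[a_1,a_2,0,\dots]$. If $a_1<a_0$ we are done. If $a_1=a_0$, we next compare $a_1=a_0$ with $a_2\leq a_0$. If $a_2<a_0$ we are done. If $a_2=a_0$, we finally compare $a_2\geq 1$ with $0$, which is strict.

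Thus \eqref{Parrycon} holds whenever $n\leq 3$.

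\emph{Link to \eqref{eq:a0a1akassump}.} To make sure the lexicographic condition really gives a finite Parry sequence in the sense of \eqref{eq:a0a1akassump}, we invoke Parry's theorem \cite{Pa}, which the text explicitly states is necessary and sufficient. Alternatively, one can verify directly that each $b_k$ in \eqref{dec1} lies in $(0,1)$, using $a_0<\beta$ and the same geometric bound. The main obstacle is only this bookkeeping of the equality cases and relying on Parry's equivalence; it does not extend to $n\geq 4$, where for instance $[1,0,1,1]$ fails \eqref{Parrycon}.
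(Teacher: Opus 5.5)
Your proof is correct and follows the paper's argument: the same sign check $f(a_0)<0<f(a_0+1)$ via the geometric series gives the bound. For $n\le 3$ the paper simply asserts that \eqref{Parrycon} holds; your case analysis of the equality cases supplies the missing check, and you use Parry's equivalence to reach \eqref{eq:a0a1akassump}, exactly as the paper does. Your alternative direct route also works, though not really via ``the same geometric bound'': for $n\le 3$ the only tails to check are $b_1=\beta-a_0$ and $b_{n-1}=a_{n-1}/\beta$, and both lie in $(0,1)$ by the bound $a_0<\beta<a_0+1$ just proved.
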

\begin{proof}
    Indeed, from \eqref{dec2} we get 
\begin{equation*}
    f(a_0)<0,\quad f(a_0+1)> 1-\sum_{j\geq 1}\frac{a_0}{(a_0+1)^j}=0
\end{equation*}
which proves that $a_0<\beta<a_0+1$. Also, when $2\leq n\leq 3$ we see that \eqref{Parrycon} is always satisfied.
\end{proof}

\subsection{Three important examples of finite Parry sequences}

\begin{enumerate}
\item The first example we consider is
\begin{equation}\label{dec4} a_0\geq a_1\geq \cdots \geq  a_{n-1}\geq 1.
\end{equation} 
Brauer \cite{Br} proved that \eqref{dec4} is also sufficient for $p(z)$ to be Parry-Pisot, which will play an important role in Theorem \ref{thm:pisot}.  For completeness, we give a short alternative proof of Brauer's result in Lemma \ref{brauer}.

\item Another example where \eqref{eq:a0a1akassump} and \eqref{Parrycon}  are satisfied is when there exists $0\leq j\leq n-2$ such that
\begin{equation}\label{dec5}
a_0 \ge a_1 \ge \cdots \ge a_j > a_{j+k}; \quad 1 \le  j+1 \le j+k \le n-1; \quad a_0a_{n-1} \ge 1. 
\end{equation}

Here $p(z)$ might not be a Parry-Pisot polynomial anymore. On the other hand, if we replace \eqref{dec5} with the more restrictive condition $a_0\geq 2+ a_1+\cdots +a_{n-1}$, which implies \eqref{dec5} with $j=0$, then $p(z)$ is Parry-Pisot again, see Lemma \ref{brauer'}. 

\item A third example where \eqref{eq:a0a1akassump} and \eqref{Parrycon} hold is 
\begin{equation}\label{dec6}
 n\geq 4,\quad   a_0\geq a_{n-1}\geq 1,\quad  a_1=\cdots =a_{n-2}=0.
\end{equation}
 We also prove in Appendix \ref{appE2} that all the roots of the polynomial $p(z)=z^n-z^{n-1}-1$ converge to the unit circle when $n\to\infty$. One can also check that when $n=6j+5$ with $j\geq 0$, two roots of this polynomial are $e^{\pm i \pi/3}$, see also \eqref{reduc} when $j=0$.  
\end{enumerate}

\section{The Main Results}
 
\subsection{Asymptotics of $\mathscr{P}^r(F)$ for smooth $F$'s}
Let us introduce the following linearly independent  (due to Lemma \ref{lemmaParry}) characteristic functions
\begin{align*}
    \psi_1(x) \coloneqq \chi_{[0,1]}(x), \quad  
    \psi_2(x) \coloneqq \chi_{[0,b_1]}(x), \quad \ldots,\quad 
      \psi_{n}(x) \coloneqq \chi_{[0,b_{n-1}]}(x),
\end{align*}
 and
\begin{align}\label{dec7}
    u_1(x) = C_0 \bigg( \psi_1(x) + \frac{1}{\beta} \psi_2(x) + \cdots + \frac{1}{\beta^{n-1}}\psi_n(x)\bigg)> 0,
\end{align}
where $C_0>0$ is chosen such that $\int_{[0,1]} u_1(x) \d x =1$. It turns out \cite{Pa} (see also \eqref{cdh1}) that $u_1$ is the unique positive solution to $\mathscr{P}f=f$ in $L^1([0,1])$, whose integral equals one. 

We now formulate the first result of our paper: 
 
\begin{theorem}\label{thm:pisot}
    Let $n\geq 2$ and let $\{a_j\}_{j=0}^{n-1}$ be a finite Parry sequence whose corresponding $p(z)$ is Parry-Pisot as in Definition \ref{def1}. Let  $\mathscr{P}$ be the operator given in \eqref{eq:mathcalP}. There exists a function $v_1\neq 0$ with the properties: 
    \begin{itemize}
        \item $v_1$ is piecewise equal to polynomials of degree at most one,
        \item $\mathscr{P}v_1 =\beta^{-1} v_1$ and $\int_{[0,1]}v_1(x)\d x=0$,
    \end{itemize}
    and there exist $N_0\geq 1$, $\varepsilon>0$, such that for every $N\geq N_0$ there exists a $C<\infty$ such that for all $F\in C^N([0,1])$ and $r\geq 1$ we have:
    \begin{align*}
        \Big\Vert (\mathscr{P}^rF)(\cdot) - u_1(\cdot) \int_0^1 F \d x -\tfrac{\beta^{-r}}{4}\big (F(1)-F(0) \big )v_1(\cdot)\Big \Vert_{L^\infty([0,1])} \leq C \Vert F \Vert_{C^N([0,1])}\beta^{-(1+\varepsilon)r}.
    \end{align*}
\end{theorem}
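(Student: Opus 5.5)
The plan is to reduce the action of $\mathscr{P}$ on smooth functions to a finite-dimensional linear recursion, by expanding $F$ in a Taylor-like way and tracking how $\mathscr{P}$ acts on polynomials times the characteristic functions $\psi_j$. The key observation is that $\mathscr{P}$ maps the finite-dimensional space
\[
V_m=\mathrm{span}\{x^\ell\psi_j:\ 0\le \ell\le m,\ 1\le j\le n\}
\]
into itself. Indeed,
\[
\mathscr{P}\big(x^\ell\chi_{[0,b]}\big)(x)=\beta^{-1-\ell}\sum_{k}(x+k)^\ell\,\chi_{\{x+k\le \beta b\}}(x),
\]
and since $\beta b_{j}=a_{j}+b_{j+1}$ (with $b_n=0$) by \eqref{dec1}, the indicator conditions produce only the intervals $[0,1]$ and $[0,b_{j+1}]$. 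Moreover $\mathscr{P}$ is triangular with respect to the degree filtration: on the top-degree part it acts like $\beta^{-\ell}$ times a fixed $n\times n$ matrix $M$ on the coefficients of the $\psi_j$. This matrix $M$ is the companion-type matrix whose characteristic polynomial is essentially $z^n p(1/(\beta z))$ rescaled, so its eigenvalues are $\zeta$ with $\beta\zeta$ a root of $p$.

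Under the Parry-Pisot hypothesis, $M$ has simple eigenvalue $1$ and all other eigenvalues of modulus at most $\beta^{-1}\rho$ with $\rho<1$. On degree-$\ell$ blocks the spectrum is therefore $\beta^{-\ell}\{1,\text{others}\}$. Restricting to $V_1$, the eigenvalues are $1$ (eigenvector $u_1$) and $\beta^{-1}$ (coming from the degree-one block). Everything else has modulus at most $\beta^{-1-\varepsilon}$, where $\varepsilon>0$ is determined by $\rho$ and by the gap $\beta^{-2}$. The eigenvector for $\beta^{-1}$ gives $v_1$. Its integral vanishes because $\int\mathscr{P}g=\int g$ together with the eigenvalue $\beta^{-1}\neq1$. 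The normalization $\tfrac14(F(1)-F(0))$ will be fixed by computing the spectral projection, i.e. by finding the left eigenvector, which is a linear functional on $V_1$. I expect that functional, evaluated on $F$, to pick out $F(1)-F(0)$ via the identity $\int F' = F(1)-F(0)$.

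To treat general $F\in C^N$, I will write $F=P_N+R_N$, where $P_N$ is the degree-$(N-1)$ Taylor polynomial at $0$ and $|R_N(x)|\le C\|F\|_{C^N}$. I will not iterate on $R_N$ directly. Instead I will use that $\mathscr{P}^r$ applied to $F$ only samples $F$ on $\beta^{-r}$-small intervals. Concretely, $\mathscr{P}^r F(x)=\beta^{-r}\sum F((x+\text{digits})\beta^{-r})$ over admissible words. Expanding $F$ locally to order $N$ around the left endpoints of the cylinders leaves an error of order $\beta^{-Nr}$ times the number of cylinders, which is $O(\beta^r)$. This gives an error $O(\beta^{-(N-1)r})$, which is at most $\beta^{-(1+\varepsilon)r}$ once $N\ge N_0=3$. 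I will organize this as a conjugation by the dilation $D_r$.

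The cleanest formulation is $\mathscr{P}^r F=\mathscr{P}^{r}(P_N)+\mathscr{P}^r(R_N)$, but that bound on $\mathscr{P}^rR_N$ is only $O(1)$. I will therefore instead use the identity $\mathscr{P}^r F = \mathscr{P}^{r-s}\mathscr{P}^{s}F$ with $\mathscr{P}^sF$ lying, up to a small error, in $V_N$. Here $\mathscr{P}^s F\in V$ would be exact if $F$ were a polynomial. Applied to a polynomial of degree $<N$, it stays in $V_{N-1}$, where the explicit spectral decomposition gives
\[
\mathscr{P}^r P = c_0u_1+c_1\beta^{-r}v_1+O\big(r^{k}\beta^{-(1+\varepsilon)r}\big).
\]
The coefficients are $c_0=\int P$ (by conservation) and $c_1=\tfrac14(P(1)-P(0))$ plus contributions that must be shown to equal the corresponding quantities for $F$.

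The main obstacle is controlling the non-polynomial remainder uniformly in $r$ while keeping the sharp coefficients. I will handle it by a piecewise Taylor expansion on the level-$r$ cylinders, i.e. applying the polynomial analysis cylinder by cylinder. Alternatively, I will express the coefficients via functionals continuous in $C^1$ (namely $\int F$ and $F(1)-F(0)$) and use density of polynomials in $C^N$ combined with a uniform bound $\|\mathscr{P}^r-\text{projections}\|_{C^N\to L^\infty}\le C\beta^{-(1+\varepsilon)r}$. That uniform bound itself follows from $\|(\mathscr{P}^rF)'\|\lesssim$ sums with factor $\beta^{-r}$ and a Lasota–Yorke type inequality iterated on derivatives.
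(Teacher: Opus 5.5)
\textbf{Verdict: genuine gap.} Your skeleton is the same as the paper's: the finite-dimensional $\mathscr{P}$-invariant space $V_{N-1}$ of piecewise polynomials, its block-triangular spectrum, and the splitting $\mathscr{P}^r=\mathscr{P}^{r-s}\mathscr{P}^s$ after a local polynomial approximation of $F$. However, the step you yourself call the main obstacle is never carried out, and neither route you offer closes it.

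\textbf{The remainder.} Taylor-expanding on level-$r$ cylinders only rewrites $\mathscr{P}^rF$ as an element of $V_{N-1}$ whose coefficients are $r$-dependent Riemann sums over cylinders of each type $[0,b_j]$. No iterations are then left to exploit the spectral gap. Reading off $u_1\int_0^1F$ and the $\beta^{-r}$ term from those sums to accuracy $o(\beta^{-r})$ is the original problem, so the cylinder count behind your $N_0=3$ is beside the point. The density alternative presupposes $\Vert \mathscr{P}^r-\text{projections}\Vert_{C^N\to L^\infty}\le C\beta^{-(1+\varepsilon)r}$, which is the theorem itself. What is needed is:
\begin{itemize}
\item an expansion at depth $s=\theta r$ with $0<\theta<1$, so that both $s$ and $r-s$ grow linearly in $r$;
\item an explicit bound for $\mathscr{P}^{r-s}$ acting on the $L^\infty$ remainder $O(\beta^{-Ns})$. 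The paper uses $\Vert\mathscr{P}\Vert_{L^\infty}\le (a_0+1)/\beta<2$; positivity, $|\mathscr{P}^jg|\le \Vert g\Vert_{L^\infty}\mathscr{P}^j\chi_{[0,1]}$, even gives a bound uniform in $j$.
\end{itemize}
Balancing this remainder against the spectral tail $\big(\max\{|\lambda_*|,\beta^{-1}\}/\beta\big)^{r-s}$ on $V_{N-1}$ is what fixes $\theta$ and $N_0$. In this scheme $N_0$ depends on $|\lambda_*|$. (Minor: the characteristic polynomial of $\beta^{-1}\mathcal{P}_n$ is $\beta^{-n}p(\beta z)$, not a rescaling of $z^np(1/(\beta z))$, though your conclusion about its eigenvalues is right.)

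\textbf{The coefficient of $\beta^{-r}v_1$.} This is the more substantive gap. You only \emph{expect} this coefficient to be $\tfrac14(P(1)-P(0))$ for a global polynomial $P$. Even that would not suffice: cylinder by cylinder, $\mathscr{P}^s$ maps a local polynomial to some $q\,\chi_{[0,b_j]}$ with $j$ depending on the cylinder. So you need the left $\beta^{-1}$-eigenvector on all of $V_{N-1}$. It is $\ell(g)=\int_0^1 g'(x)\d x$, with $g'$ the a.e.\ derivative:
\begin{itemize}
\item $(\mathscr{P}g)'=\beta^{-1}\mathscr{P}(g')$ a.e.\ and $\mathscr{P}$ preserves integrals, so $\ell(\mathscr{P}g)=\beta^{-1}\ell(g)$;
\item $\ell$ vanishes on piecewise constants;
\item $\beta^{-1}$ is simple on $V_{N-1}$ in the Parry--Pisot case, so the corresponding Riesz projection is $g\mapsto \ell(g)\,v_1/\ell(v_1)$.
\end{itemize}
On the local Taylor pieces, telescoping gives
$\ell\big(\sum_w T_w\chi_{I_w}\big)=\sum_w\big(T_w(\text{right end of } I_w)-T_w(\text{left end of }I_w)\big)=F(1)-F(0)+O(\beta^{-(N-1)s})$.
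The normalization $\ell(v_1)=4$ then produces the factor $\tfrac14$.

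The paper avoids computing $\ell$ altogether. It partitions $[0,1]$ into the full cylinders $[t_{\bk_m}^{\bj_m},t_{\bk_m}^{\bj_m}+\beta^{-|\bk_m|}]$ and uses the Euler--Bernoulli formula instead of Taylor. By Lemma~\ref{lemmacrux}, every local Bernoulli piece is mapped by $\mathscr{P}^{|\bk_m|}$ exactly onto $H_{kn+1}$ on the whole of $[0,1]$. Then:
\begin{itemize}
\item the $B_0$-terms add up to $u_1\int_0^1F$;
\item the $B_1$-terms telescope to $\Vert B_1\Vert_{L^1}(F(1)-F(0))=\tfrac14(F(1)-F(0))$ via Lemma~\ref{lemma26};
\item the $B_k$-terms with $k\ge 2$ are $O(\beta^{-r-m})$, because $m$ is taken proportional to $r$.
\end{itemize}
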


The proof of this theorem is in Section \ref{sec3.3}. 
\begin{remark} 
    Theorem \ref{thm:pisot} generalizes the results of \cite{CS} where only the case $n=2$ was considered. It turns out (see Lemma \ref{pisot}) that $p(z)$ is always a Parry-Pisot   polynomial for all possible finite Parry sequences when $n\leq 3$.  If $n\geq 4$ this is no longer true, as we will see later on. 
    
    Nevertheless, other conditions like in \eqref{dec4} are sufficient for both having a  Parry sequence and for $p(z)$ to be a Parry-Pisot polynomial (see Lemma \ref{brauer}). The condition  $a_0\geq 2+a_1+\cdots +a_{n-1}$ also satisfies both requirements (see \eqref{dec5} and Lemma \ref{brauer'}).
\end{remark}

\begin{remark}
 The estimate we obtain in Theorem \ref{thm:pisot} does not hold when $F$ is not smooth enough, for example when we allow functions of bounded variations, 
 a case in which the character of the problem changes completely. The best we can do in the Parry-Pisot case is an error bound going precisely like $\beta^{-r}$, see Corollary  \ref{coroll1}(iv).
\end{remark}
\begin{remark}
    The case in which the coefficients $\{a_j\}_{j\geq 0}$ become eventually periodic but not zero, remains open.  
\end{remark}

 When $p(z)$ is not Parry-Pisot, we have to consider $\W$ as an operator on functions with bounded variation and in general we cannot hope for error bounds better than $\beta^{-\alpha r}$ with $0<\alpha<1$. But when $F$ is a polynomial and $p(z)$ is ``barely" not Parry-Pisot because it has roots on the unit circle, we can do much better, as we can see in the following example. 
\begin{theorem}\label{nopisot2}
   Let $n=4$, $a_0=3$, $a_1=2$, $a_2=0$ and $a_3=2$, with $3<\beta<4$.  Then: 
   \begin{enumerate}
   \item 
   There exist two functions $u_2$ and $v_1$ with the properties: 
    \begin{itemize}
        \item $u_2$ is piecewise constant and $v_1$ is piecewise equal to a polynomial of degree at most one,
        \item $\mathscr{P}u_2 =-\frac{1}{\beta} u_2$, $\mathscr{P}v_1 =\frac{1}{\beta} v_1$ and $\int_{[0,1]}u_2(x)\d x=\int_{[0,1]}v_1(x)\d x=0$.
    \end{itemize}
    \item If $q(x)$ is any polynomial of degree $M\geq 1$ on $[0,1]$ then there exist $\varepsilon>0$ and $C<\infty$ and two coefficients $C_{2,q}$ and $C_{3,q}$ depending on $q$ such that for all $r\geq 1$ we have
   {\small \begin{align*}
        &\Big\Vert (\mathscr{P}^r q)(\cdot) - u_1(\cdot) \int_0^1 q(x) \d x  - \tfrac{(-1)^r}{\beta^r} C_{2,q} u_2(\cdot) -\tfrac{1}{\beta^{r}} C_{3,q} v_1(\cdot) \Big \Vert_{L^\infty} \leq C \Vert q \Vert_{C^M}\,\beta^{-(1+\varepsilon)r}.
    \end{align*} }
    \end{enumerate}
\end{theorem}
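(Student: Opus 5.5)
The plan is to reduce everything to finite-dimensional linear algebra, using the fact that $\mathscr{P}$ preserves spaces of piecewise polynomials with breakpoints at the orbit of $1$. For $M\ge 0$ let $V_M$ be the span of the functions $x^d\psi_j(x)$ with $0\le d\le M$ and $1\le j\le 4$, where $\psi_j=\chi_{[0,b_{j-1}]}$ as in the previous section. Any polynomial $q$ of degree $M$ equals $q\,\psi_1$, so $q\in V_M$.

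\textbf{Step 1: invariance of $V_M$.} Fix $k\ge 1$. For $b\in\{b_0,b_1,b_2,b_3\}$, the summands of \eqref{eq:mathcalP} applied to $x^d\chi_{[0,b]}$ are $\beta^{-1}\big(\tfrac{x+k}{\beta}\big)^d\chi_{[0,\beta b-k]}(x)$, restricted to $[0,1]$. Here $\beta b_j-k$ is either $\ge 1$, or $\le 0$, or equal to $b_{j+1}$ (using \eqref{dec1}), with $b_4=0$. So $\mathscr{P}V_M\subset V_M$.

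\textbf{Step 2: triangular structure in the degree.} Grade $V_M$ by degree. Modulo $V_{d-1}$, $\mathscr{P}$ acts on the top-degree part $x^d\cdot\mathrm{span}\{\psi_j\}$ as $\beta^{-d}A$, where $A$ is the $4\times4$ matrix of $\mathscr{P}$ on $V_0$. Indeed, the leading coefficient of $\beta^{-1}\big((x+k)/\beta\big)^d$ is $\beta^{-1-d}$, and the characteristic functions transform exactly as in degree $0$. The characteristic polynomial of $A$ should be, up to normalization, $p(\beta z)$, as in the construction of $u_1$ in \cite{Pa} and \eqref{cdh1}. Hence the eigenvalues of $A$ are $\zeta=\mu/\beta$ with $p(\mu)=0$. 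Consequently
\[
\sigma(\mathscr{P}|_{V_M})\subset\bigcup_{d=0}^{M}\beta^{-d}\sigma(A).
\]
Here $p(z)=z^4-3z^3-2z^2-2=(z+1)(z^3-4z^2+2z-2)$. The cubic factor has $\beta\approx3.6$ as a root, and its other two roots have product $2/\beta<1$, hence common modulus $\rho=\sqrt{2/\beta}<1$. So $\sigma(A)=\{1,-\beta^{-1},\mu_\pm/\beta\}$ with $|\mu_\pm|=\rho$.

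The only points of $\sigma(\mathscr{P}|_{V_M})$ with modulus $>\beta^{-1-\varepsilon}$, for a suitable small $\varepsilon>0$, are therefore:
\begin{itemize}
\item $1$, from $d=0$;
\item $-\beta^{-1}$, from $d=0$;
\item $\beta^{-1}$, from $d=1$, namely $\beta^{-1}\cdot1$.
\end{itemize}
All other points lie in $\{|z|\le\max(\rho\beta^{-1},\beta^{-2})\}$. These three eigenvalues are distinct and each is algebraically simple. For $d=0$ this holds because $p$ has simple roots. For $\beta^{-1}$, it occurs exactly once in the union, at $d=1$ only. Simplicity excludes Jordan blocks, so there are no polynomial-in-$r$ factors.

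\textbf{Step 3: eigenvectors and the expansion.} The eigenvector of $A$ for $-\beta^{-1}$ gives the piecewise constant function $u_2$. The $\beta^{-1}$-eigenvector in $V_1$ gives $v_1$; it is piecewise linear and has a nonzero degree-one part proportional to $x\,u_1$ plus a piecewise constant correction. The correction is solvable because $\beta^{-1}\notin\sigma(A)$, which is exactly why $-1$ rather than $1$ being a root of $p$ matters. The duality \eqref{dec12} with $K1=1$ gives $\int\mathscr{P}f=\int f$. Hence every eigenvector with eigenvalue $\neq1$ has zero integral, and the projection onto $u_1$ is $f\mapsto u_1\int f$. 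Let $\Pi_2,\Pi_3$ be the Riesz projections for $-\beta^{-1}$ and $\beta^{-1}$, and define $C_{2,q}u_2=\Pi_2 q$ and $C_{3,q}v_1=\Pi_3 q$. On the complementary invariant subspace the spectral radius is at most $\beta^{-1-2\varepsilon}$. Then
\[
\|\mathscr{P}^r(I-\Pi_1-\Pi_2-\Pi_3)\|\le C\beta^{-(1+\varepsilon)r},
\]
in any norm on the finite-dimensional space $V_M$. Finally, $\|\cdot\|_{L^\infty}$ is dominated by a norm on $V_M$, and that norm of $q$ is bounded by $\|q\|_{C^M}$ via its Taylor coefficients, with constants depending on $M$.

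\textbf{Main obstacle.} The delicate point is Step 2: identifying the characteristic polynomial of $A$ with $p(\beta z)$ and making the graded triangular structure precise. This requires checking carefully how the breakpoints $\beta b_j-k$ fall for this specific sequence $(3,2,0,2)$, including the step $b_3\mapsto b_4=0$. I would first compute $A$ explicitly in the basis $\psi_1,\dots,\psi_4$, verify $\det(A-z)\propto p(\beta z)$ directly, and then confirm that $v_1$ is the same function as in Theorem \ref{thm:pisot}.
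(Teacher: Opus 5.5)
Your proposal is correct, and it reaches the result by a more direct route than the paper.

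\textbf{The paper's route.} It first applies the Euler--Bernoulli expansion \eqref{cdh20} with $m=1$ and $N=M+1$, so that the remainder vanishes, together with Lemma \ref{lemmacrux}. This rewrites $\mathscr{P}^r q$ as a combination of $\mathscr{P}^{r-k_1}H_1$ and $\mathscr{P}^{r-k_1}H_{4s+1}$. Only then does it read off the components at $1$, $-\beta^{-1}$ and $\beta^{-1}$ from the block-triangular matrix \eqref{cdh30} with $\nu=M+1$.

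\textbf{Your route.} You observe that $q=q\,\psi_1$ already lies in $V_M$, which is exactly $\mathrm{span}\{H_{sn+j}\}$. So the Euler--Bernoulli step and Lemma \ref{lemmacrux} are not needed at all. Your graded structure, with diagonal blocks $\beta^{-d}A$, is the same block-triangular form as \eqref{cdh23}. Your version also supplies details the paper only asserts:
\begin{itemize}
\item algebraic simplicity of $1$, $-\beta^{-1}$ and $\beta^{-1}$ on $V_M$;
\item an explicit construction of $v_1$ by solving $(A-\beta^{-1})w=-c$, correctly noting that this uses the fact that $1$ is not a root of $p$;
\item an $\varepsilon$ independent of $M$.
\end{itemize}

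\textbf{What the paper's route buys.} It produces the explicit representation \eqref{cdh29}. There the coefficients are expressed through the differences $q^{(s-1)}(t_{k_1}^{j_1}+\beta^{-k_1})-q^{(s-1)}(t_{k_1}^{j_1})$ at the first-level partition points. This formula is reused in Theorem \ref{thm:left}(v), and it keeps the argument uniform with the proof of Theorem \ref{thm:pisot} for general smooth $F$.

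\textbf{One point to fill in.} Saying that the two remaining roots of $z^3-4z^2+2z-2$ have ``common modulus'' $\sqrt{2/\beta}$ presupposes that they are non-real. If they were real, knowing their product $2/\beta$ would not bound each modulus by $1$, and your spectral gap would be lost. You can justify non-reality as the paper does: $p(x)/x^3$ is increasing on $(-\infty,0)$, so $-1$ is the only negative root of $p$. Alternatively, check that the cubic is negative at its local maximum $x=(4-\sqrt{10})/3$, so it has only one real root.
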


This theorem is proved in Section \ref{sec3.5}. 

\subsection{The spectrum of the transfer operator when restricted to $L^p([0,1])$ spaces}
Let $\{a_j\}_{j=0}^{n-1}$ be any finite Parry sequence with 
\begin{equation*}
    b_1=T_\beta(1)=\beta-a_0=\frac{a_1}{\beta}+\cdots+\frac{a_{n-1}}{\beta^{n-1}}\in (0,1).
\end{equation*}

\begin{theorem}\label{thm:eigvaleigfunc}
The following statements hold true: 
\begin{enumerate}
 \item { For any $g\in C_0^\infty((0,1))$ we define $\psi_{0}(x)=g(\beta x)\chi_{[0,b_1/\beta]}(x)-g(\beta x-a_0)\chi_{[a_0/\beta,1]}(x)$. Then  
\begin{align}\label{eq:betaPpsi0equalzero}
    (\mathscr{P}\psi_{0})(x) = 0 \quad \text{for a.e.}\quad  x \in [0,1].
\end{align} }
\item Let $u_1$ be as in \eqref{dec7}. Then $\tilde{K}_p \coloneqq {u_1}^{1/p} K \frac{1}{{u_1}^{1/p}}$ is a non-surjective isometry on $L^p([0,1])$ for all $1\leq p\leq \infty$. Moreover, the spectrum of $\tilde{K}_p$ and $K$ equals the closed unit disk. 
\item 
Let $1\leq p,p'\leq\infty$ and $\frac{1}{p}+\frac{1}{p'}=1$. For every $\vert z \vert < 1$ let us define  
\begin{align*}
    \psi_z \coloneqq u_1^{1/p'} \bigg( \mathrm{Id} - z \tilde{K}_p \bigg)^{-1} \bigg (\frac{\psi_{0}}{u_1^{1/p'}}\bigg )
\in L^p([0,1]).
\end{align*}
Then $\mathscr{P}\psi_z=z\psi_z$.
\end{enumerate}
\end{theorem}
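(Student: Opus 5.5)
Part (i) is a direct computation from \eqref{eq:mathcalP}. For $x\in[0,1]$, the points $(x+k)/\beta$ with $0\le k\le a_0$ lie in $[0,1]$ except possibly the last one. For $k=0$ we have $(x+0)/\beta\in[0,1/\beta]$. On this interval only the first piece of $\psi_0$ can be nonzero, and it gives $g(x)\chi_{[0,b_1]}(x)$. For $1\le k\le a_0-1$ the point $(x+k)/\beta$ lies in $[1/\beta,a_0/\beta]$. There $\psi_0$ vanishes, because $g$ has compact support in $(0,1)$, so $g(\beta y)=0$ for $y\ge1/\beta$, and the second characteristic function needs $y\ge a_0/\beta$. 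For $k=a_0$ the point is $(x+a_0)/\beta$, which lies in $[0,1]$ iff $x\le b_1$, and there it contributes $-g(x)$. Summing, $\beta(\W\psi_0)(x)=g(x)\chi_{[0,b_1]}(x)-g(x)\chi_{[0,b_1]}(x)=0$. One small bookkeeping point: the first piece must vanish for $\beta y\in(b_1,1]$, which holds thanks to the cutoff $\chi_{[0,b_1/\beta]}$.

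For (ii), I use the invariance $\W u_1=u_1$ (stated after \eqref{dec7}) together with the duality \eqref{dec12}. For $f\in L^p$,
\[
\|\tilde K_p f\|_p^p=\int u_1\,K(|f|^p/u_1)=\int (\W u_1)\,|f|^p/u_1=\|f\|_p^p .
\]
The same argument with $p=\infty$ is trivial, since $K$ preserves essential suprema: $T_\beta$ is nonsingular and onto up to null sets. I also use that $u_1$ is bounded above and below by positive constants. So $\tilde K_p$ is an isometry.

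Non-surjectivity follows from (i). Let $\psi_0\neq0$. Then $\W\psi_0=0$ implies $\int \psi_0\,Kh=0$ for all $h$. Hence the range of $K$, and therefore of $\tilde K_p$ after conjugation, is annihilated by a nonzero functional, namely $\psi_0/u_1^{1/p}$ paired suitably. So the range is not dense, in particular not everything.

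For the spectrum: an isometry has spectrum in the closed unit disk. A non-surjective isometry is not invertible, and its range is closed. Consequently every $|z|<1$ gives $\tilde K_p-z$ with closed range, and the codimension is locally constant and nonzero at $z=0$. Hence $\sigma(\tilde K_p)$ is the full closed disk. $K$ is similar to $\tilde K_p$ via bounded multiplication by $u_1^{\pm1/p}$, so it has the same spectrum.

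For (iii), $\|z\tilde K_p\|<1$, so the Neumann series converges. I compute $\W\psi_z$ using the identity
\[
\W(u_1^{1/p'}\tilde K_p h)=\W(u_1 K(h/u_1^{1/p'}))\cdot\ldots
\]
More cleanly, note $u_1^{1/p'}\tilde K_p(h/u_1^{1/p'})=u_1^{1/p'}u_1^{1/p}K(h u_1^{-1/p'}u_1^{-1/p})=u_1K(h/u_1)$. The key relation is $\W(u_1 K\phi)=\phi\,\W u_1=\phi u_1$, which follows from the pointwise identity $\W(gKf)=f\,\W g$ (from \eqref{eq:mathcalP}, since $T_\beta((x+k)/\beta)=x$). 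Writing $\psi_z=\sum_m z^m S^m\psi_0$ with $Sh=u_1K(h/u_1)$, I get $\W S^m\psi_0=S^{m-1}\psi_0$ for $m\ge1$ and $\W\psi_0=0$. Hence $\W\psi_z=\sum_{m\ge1}z^mS^{m-1}\psi_0=z\psi_z$, using boundedness of $\W$ to exchange it with the sum.

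The main delicate step is the spectrum claim in (ii), where I rely on the standard fact about non-invertible isometries. I can instead argue directly that each $|z|<1$ is in the spectrum of the adjoint, because $\psi_z$ is an eigenvector of $\W$, which is essentially the adjoint of $K$.
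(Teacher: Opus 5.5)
Your proof is correct and follows the paper's structure: (i) by direct evaluation of the $a_0+1$ terms; (ii) via the duality \eqref{dec12}, using $\mathscr{P}u_1=u_1$ for the isometry and $\mathscr{P}\psi_0=0$ for non-surjectivity, with the spectrum then given by the standard fact on non-surjective isometries (the paper cites [CHM, Lemma 3.1]; your local-constancy-of-codimension argument works because $\|(\tilde K_p-z)f\|\geq(1-|z|)\|f\|$ for $|z|<1$, which is what actually gives closed range); and (iii) via the Neumann series and the left-inverse identity $\mathscr{P}(u_1Kf)=u_1f$. The only minor difference is that you obtain this identity directly from the pointwise relation $\mathscr{P}(g\,Kf)=f\,\mathscr{P}g$, which covers every $p$ at once and shows that $\psi_z=\sum_j z^j(u_1Ku_1^{-1})^j\psi_0$ is independent of $p$, whereas the paper derives it from $\tilde K_2^*\tilde K_2=\mathrm{Id}$ on $L^2$ followed by a density argument.
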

The proof of this theorem is given in Section \ref{sec3.6}. 
\subsection{The resolvent of the transfer operator when restricted to functions of bounded variation}

The next result is about  the resolvent and the spectrum of $\W$ when restricted to functions of (essential) bounded variation $BV([0,1])\subset L^\infty([0,1])$ where the norm is given by:   
\begin{equation}\label{beve}
\begin{aligned}
&\Vert f\Vert_{BV}:=\inf\big  \{{\rm Var}_0^1(g)+\Vert g\Vert_{L^\infty}\, :\, f=g\; \text{Lebesgue almost everywhere on}\;  [0,1]\big \}, \\
&\text{where}\quad  {\rm Var}_0^1(g):=\sup_{m\geq 1}\, \sup_{\{0=x_1<x_2<\cdots<x_m=1\}}\sum_{j=1}^{m-1}|g(x_{j+1})-g(x_j)|.
\end{aligned}
\end{equation} 

One may also use $\Vert f\Vert_{L^1}$ instead of $\Vert f\Vert_{L^\infty}$ in the definition of the $BV$ norm. The two definitions lead to equivalent norms because on $[0,1]$ we have the inequalities
\begin{equation*}
    \Vert f\Vert_{L^1}\leq \Vert f\Vert_{L^\infty},\quad \Vert f\Vert_{L^\infty}\leq {\rm Var}_0^1(f) +\Vert f\Vert_{L^1}.
\end{equation*} 

Let $\Pi$ be the rank $n$ orthogonal projection corresponding to the subspace 
\begin{equation*}
    {\rm Span}\big \{\chi_{[0,b_0]}, \ldots, \chi_{[0,b_{n-1}]}\big \}\subset L^2([0,1])
\end{equation*} 
where the $b_j$'s are as in \eqref{dec1}.  The matrix $A=\{\langle \chi_{[0,b_k]},\chi_{[0,b_j]}\rangle_{L^2}\}_{k,j=0}^{n-1}$ is symmetric and invertible. Define  
$\tilde{u}_j\in {\rm Range}(\Pi)$ by $\tilde{u}_j=\sum_{k=0}^{n-1} \big (A^{-1}\big )_{jk}\chi_{[0,b_k]}$. Then $\int_0^1 \tilde{u}_j(t) \chi_{[0,b_m]}(t) \d t =\delta_{jm}$ and   
\begin{equation}\label{proj-pi}
    (\Pi f)(x)=\sum_{j=0}^{n-1} \chi_{[0,b_j]}(x) \int_0^1 \tilde{u}_j(t) f(t) \d t.
\end{equation}
\begin{theorem}\label{thm:mero}
Both $\W$ from \eqref{eq:mathcalP} and   $\Pi$ from \eqref{proj-pi} leave  $BV([0,1])$ invariant, are bounded on this space and $\W\Pi=\Pi \W\Pi$. Let $\mathcal{P}_n$ be the matrix from \eqref{Pn} corresponding to $\beta \Pi \W\Pi$. Then  on $BV$, the difference \begin{equation*}
    \big (\W-z)^{-1}-\Pi (\beta^{-1}\mathcal{P}_n -z\big )^{-1}\Pi 
\end{equation*} 
has an analytic extension from  $|z|>\Vert \W\Vert_{BV}\geq 1$ to $|z|>\beta^{-1}$. Thus $(\W-z)^{-1}$ is meromorphic on $|z|>\beta^{-1}$ with finite rank poles at all points $\zeta$ for which $\lambda=\zeta \beta$ (with $|\lambda|>1$) is a root of the Parry polynomial $p(z)$ from \eqref{dec2'}.  

Moreover, the essential spectrum of $\W$ is the whole disk $|z|\leq \beta^{-1}$, where every $z$ with $|z|<\beta^{-1}$ is an eigenvalue with  infinite multiplicity. 
\end{theorem}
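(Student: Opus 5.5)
The plan is to reduce everything to a block-triangular decomposition of $\W$ on $BV$ with respect to ${\rm Range}(\Pi)$. The finite-dimensional block carries the poles, and the complementary block has spectral radius at most $\beta^{-1}$.

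\emph{Step 1: invariance and the matrix $\mathcal{P}_n$.} The first step is a direct computation on the characteristic functions. For $0\le t\le 1$ one has
\begin{equation*}
(\W\chi_{[0,t]})(x)=\beta^{-1}\#\{0\le k\le a_0:\ x+k\le \beta t\},
\end{equation*}
so $\W\chi_{[0,t]}=\beta^{-1}\big(\lfloor\beta t\rfloor\chi_{[0,1]}+\chi_{[0,T_\beta(t)]}\big)$ a.e. Using $\beta b_j=a_j+b_{j+1}$ from \eqref{dec1}, with $b_n=0$, this gives
\begin{equation*}
\W\chi_{[0,b_j]}=\beta^{-1}\big(a_j\chi_{[0,b_0]}+\chi_{[0,b_{j+1}]}\big).
\end{equation*}
Hence ${\rm Range}(\Pi)$ is $\W$-invariant, i.e. $\W\Pi=\Pi\W\Pi$. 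In the basis $\chi_{[0,b_j]}$, the operator $\beta\Pi\W\Pi$ is a companion-type matrix $\mathcal{P}_n$, and expanding its characteristic determinant along the first row shows it equals $p(\lambda)$. Boundedness of $\Pi$ on $BV$ is clear, since it is a finite-rank operator built from $L^1$ functionals and $BV$ step functions. Boundedness of $\W$ on $BV$ follows from the representation in Step 2.

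\emph{Step 2: the complementary block.} I would write every $f\in BV$, up to a.e. equality, as
\begin{equation*}
f=\int_{[0,1]}\chi_{[0,t]}\,\d\nu(t),
\end{equation*}
where $\nu$ is the finite Borel measure $-Df$ plus an atom $f(1^-)\delta_1$. The total variation $|\nu|$ is equivalent to $\Vert f\Vert_{BV}$. By Step 1, $\W$ acts on $\nu$ by
\begin{equation*}
\nu\mapsto \beta^{-1}\Big((T_\beta)_*\nu+\Big(\int\lfloor\beta t\rfloor\,\d\nu(t)\Big)\delta_1\Big).
\end{equation*}
The measures supported on the orbit $\{b_0,\dots,b_{n-1}\}$ of $1$ correspond exactly to ${\rm Range}(\Pi)$. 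On the quotient space, the operator induced by $\W$ is $\beta^{-1}(T_\beta)_*$, whose norm is at most $\beta^{-1}$ in total variation, because the atom at $1$ dies in the quotient. Via the isomorphism $BV/{\rm Range}(\Pi)\cong(1-\Pi)BV$, the operator $Q:=(1-\Pi)\W(1-\Pi)$ on $(1-\Pi)BV$ is similar to this quotient operator. Therefore $\Vert Q^r\Vert\le C\beta^{-r}$ and the spectral radius of $Q$ is at most $\beta^{-1}$.

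\emph{Step 3: the resolvent formula.} With $\W\Pi=\Pi\W\Pi$, block inversion gives
\begin{equation*}
(\W-z)^{-1}=\Pi(\Pi\W\Pi-z)^{-1}\Pi+(Q-z)^{-1}(1-\Pi)-\Pi(\Pi\W\Pi-z)^{-1}\Pi\W(1-\Pi)(Q-z)^{-1}(1-\Pi).
\end{equation*}
Here $\Pi\W\Pi=\beta^{-1}\mathcal{P}_n$ on ${\rm Range}(\Pi)$. The second term is analytic on $|z|>\beta^{-1}$. The third term looks singular, but in the difference $(\W-z)^{-1}-\Pi(\beta^{-1}\mathcal{P}_n-z)^{-1}\Pi$ it does not cancel. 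I would therefore state the claim more carefully: the difference is meromorphic with only those finite-rank poles, or, as I expect the paper intends, the singular part is exactly the finite-rank matrix pole part. I expect this bookkeeping to be the main obstacle, since one must check precisely which singular terms survive. The poles are the eigenvalues $\zeta$ of $\beta^{-1}\mathcal{P}_n$, i.e. the points with $p(\beta\zeta)=0$. Those with $|\zeta|\le\beta^{-1}$, i.e. $|\beta\zeta|\le 1$, are irrelevant to the region $|z|>\beta^{-1}$.

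\emph{Step 4: the essential spectrum.} For $|z|<\beta^{-1}$ I would take $\psi_z=\sum_k z^k\,\cdot$ (the weighted $K^k\psi_0$) from Theorem \ref{thm:eigvaleigfunc}. The function $K^k\psi_0$ has variation at most $C\beta^k\Vert\psi_0\Vert_{BV}$, because $T_\beta^k$ has at most $C\beta^k$ monotone branches and multiplication by the piecewise constant weights $u_1^{\pm1/p}$ preserves $BV$. So the series converges in $BV$, and $\W\psi_z=z\psi_z$. Choosing $g$ with disjoint supports gives infinitely many independent eigenvectors, which I would check by looking near the first nonzero term. Hence every $|z|<\beta^{-1}$ is an eigenvalue of infinite multiplicity. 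Closedness of the essential spectrum adds the circle $|z|=\beta^{-1}$. Step 3 shows there is no essential spectrum outside this closed disk.
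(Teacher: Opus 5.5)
Your argument is correct for everything in the statement that is actually true. In its central step it takes a different route from the paper.

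\textbf{Comparison with the paper's route.} The paper bounds $\Vert(1-\Pi)\W^r(1-\Pi)\Vert_{BV}\le C\beta^{-r}$ via Lemma \ref{lemmap1}. That lemma rests on the one-step inequality of Lemma \ref{lem1}, ${\rm Var}_0^1(\W f-f(1)\beta^{-1}\chi_{[0,b_1]})\le\beta^{-1}{\rm Var}_0^1(f)$, iterated through the corrected functions $g_r$ of Lemma \ref{lemmap2}. The paper then inverts only $A(z)=(1-\Pi)\W^{r_0}(1-\Pi)-z^{r_0}$, by a Neumann series for large $r_0$, and recovers $(\W-z)^{-1}$ from the factorization of $\xi^{r_0}-z^{r_0}$.

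You instead identify $BV$ with measures on $(0,1]$ via $f(x)=\nu([x,1])$. Under this identification $\W$ becomes $\beta^{-1}(T_\beta)_*$ plus a rank-one term with range $\C\delta_1$. This is the same mechanism as Lemma \ref{lem1}: the atom of $\nu$ at $1$ is exactly what produces the correction $\chi_{[0,b_1]}=\chi_{[0,T_\beta(1)]}$. Your packaging, however, makes iteration trivial. The quotient by measures on the orbit $\{b_0,\dots,b_{n-1}\}$, i.e.\ by ${\rm Range}(\Pi)$, is an honest $\beta^{-1}$-contraction in total variation. So $r(Q)\le\beta^{-1}$ follows at once, boundedness of $\W$ comes for free, and plain block inversion replaces the $r_0$ trick. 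What the paper's route buys is the quantitative Lemma \ref{lemmap2}, which it reuses in Corollary \ref{coroll1}(iv).

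Two small repairs:
\begin{itemize}
\item Discard mass that $(T_\beta)_*$ sends to $0$ (e.g.\ from $b_{n-1}$ or from $k/\beta$), since $\delta_0$ represents the zero function.
\item In Step 4, take $g\in C_0^\infty((0,b_1))$, because $\psi_0$ only sees $g|_{[0,b_1]}$. Linear independence then follows from injectivity of $g\mapsto\psi_0$ and of $(1-z\tilde{K}_1)^{-1}$.
\end{itemize}

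\textbf{The literal analyticity claim.} Your suspicion in Step 3 is justified, and no bookkeeping rescues the literal claim.

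At $\zeta=1$ the off-diagonal singular term does vanish. Indeed $q_1g=u_1\int_0^1 g$ on ${\rm Range}(\Pi)$. Also $\int_0^1\Pi g=\int_0^1 g$, since $\Pi$ is $L^2$-orthogonal and $\chi_{[0,1]}\in{\rm Range}(\Pi)$. Finally $\W$ preserves integrals. Hence $q_1\Pi\W(1-\Pi)(Q-1)^{-1}(1-\Pi)=0$.

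At any pole $\zeta\neq 1$ with $|\zeta|>\beta^{-1}$ the singular term cannot vanish. Suppose the off-diagonal term were regular there.
\begin{itemize}
\item Then the Riesz projection of $\W$ at $\zeta$ would equal $\Pi q_\zeta\Pi$.
\item Hence $\W^*$ would have an eigenfunctional $f\mapsto\int_0^1 wf$ with $0\neq w\in{\rm Range}(\Pi)\subset L^\infty$.
\item By \eqref{dec12} this means $Kw=\zeta w$.
\item This is impossible: $\int_0^1u_1|Kw|=\int_0^1u_1|w|$, while $|\zeta|<1$ and $u_1\ge C_0>0$.
\end{itemize}

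So the difference is analytic on $|z|>\beta^{-1}$ exactly when $\beta$ is the only root of $p$ of modulus larger than $1$. It fails, for instance, for the degree-$4$ example in the Remark at the end of Section \ref{proof-coro}.

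Your first reformulation is the correct statement: $(\W-z)^{-1}$ is meromorphic on $|z|>\beta^{-1}$ with finite-rank principal parts ranging in ${\rm Range}(\Pi)$. The poles are genuine because each such $\zeta$ has the eigenvector \eqref{sep10}. Your second reading, that the principal parts coincide with those of $\Pi(\beta^{-1}\mathcal{P}_n-z)^{-1}\Pi$, is false. The paper's own \eqref{hol4} keeps the same off-diagonal term, and \eqref{chiuj} relies on it, so the paper's proof also establishes only meromorphy.
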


The proof of this theorem is given in Section \ref{sec-mero}. 
\subsection{Asymptotics of $\W^r(f)$ when $f\in BV([0,1])$}

Here $\W$ is again considered as an operator on $BV([0,1])$. Let $\Pi$ be the projection from \eqref{proj-pi}, together with the matrix $\mathcal{P}_n$ associated to $\beta \Pi\W\Pi$ and its eigenvalues $\{\beta\equiv \lambda_1,\lambda_2,\ldots,\lambda_n\}$ from Lemma \ref{lemmahc1}, which are also the roots of the Parry polynomial $p(z)$ from \eqref{dec2'}. From Theorem \ref{thm:mero} we know that the essential spectrum of $\W$ restricted to $BV([0,1])$ equals the disk $|z|\leq \beta^{-1}$ and is pure point if $|z|<\beta^{-1}$. Also, from Theorem \ref{thm:mero} we know that all the numbers $\zeta_j:=\lambda_j \beta^{-1}$ with $|\zeta_j|>\beta^{-1}$ are discrete eigenvalues of $\W$. We have that $\zeta_1=1$ is always a simple eigenvalue, and when $p(z)$ is Parry-Pisot then this is the only discrete eigenvalue. 

\begin{corollary}\label{coroll1}
 Let $Q_j$ be the Riesz projection on the generalized eigenspace corresponding to the eigenvalue $\zeta_j$ with $|\zeta_j|>\beta^{-1}$ and let $Q = \sum_{|\zeta_j| > \beta^{-1}} Q_j$.  Let $1\leq m_j<n$ be the finite rank of $Q_j$. 
 \begin{enumerate}
 \item We have 
 \begin{equation*}
     \W^rQ = Q_1+\sum_{1>|\zeta_j| > \beta^{-1}} \zeta_j^r \big (1 + (N_j/\zeta_j)\big )^r Q_j
 \end{equation*}
 where $N_j:=(\W-\zeta_j)Q_j$ obeys $N_j^{m_j}=0$ and 
 \begin{equation*}
     \big (1+N_j/\zeta_j\big )^r = \sum_{l=0}^{m_j-1} (N_j/\zeta_j)^l \binom{r}{l}.
 \end{equation*} 

 \item Let $0<\alpha<1$. Then for all $r\geq 1$ we have
 \begin{equation}\label{finlacom}
 \W^r = Q_1+\sum_{1>|\zeta_j| > \beta^{-1}} \zeta_j^r \big (1 + (N_j/\zeta_j)^r \big ) Q_j + \mathcal{O}(\beta^{-\alpha r}), 
 \end{equation}
where $\mathcal{O}(\beta^{-\alpha r})$ is in the BV norm. We also have 
\begin{equation}\label{juli10}         
Q_1 f = u_1 \int_0^1 f(x)\d x.
\end{equation}
\item Assume that all eigenvalues obeying $1> |\zeta_j| > \beta^{-1}$ have multiplicity $m_j=1$. There exist $u_j\in {\rm Range}(\Pi)$ and  $\hat{U}^{(j)}\in BV([0,1])^*$ with $(Q_j f)(x)=u_j(x)\, \hat{U}^{(j)}(f)$  and $\hat{U}^{(j)}(u_k) = \delta_{jk}$ such that given $0<\alpha<1$ there exists $C_\alpha>0$ such that for all $f\in BV([0,1])$ and all $r\geq 1$ we have
\begin{equation}\label{juli23}
\Big \Vert (\W^r f)(\cdot)- u_1(\cdot) \int_0^1 f(t)\d t -\sum_{1>|\zeta_j| > \beta^{-1}} \zeta_j^r u_j(\cdot) \, \hat{U}^{(j)}(f)\Big \Vert_{BV}\leq C_\alpha \Vert f\Vert_{BV} \beta^{-\alpha r}.
\end{equation}
\item If $p(z)$ is Parry-Pisot, then there exists $C>0$ such that for all $f\in BV([0,1])$ and all $r\geq 1$ we have
\begin{equation*}
    \Big \Vert (\W^r f)(\cdot)- u_1(\cdot) \int_0^1 f(t)\d t \Big \Vert_{BV}\leq C \Vert f\Vert_{BV} \beta^{-r}.
\end{equation*}
\end{enumerate}
\end{corollary}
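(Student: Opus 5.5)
The whole argument is Riesz–Dunford functional calculus applied to the meromorphic resolvent of Theorem \ref{thm:mero}. By that theorem, on $BV$ the spectrum of $\W$ in $|z|>\beta^{-1}$ consists of finitely many isolated eigenvalues $\zeta_j$, each with a finite rank Riesz projection $Q_j$. Put $Q=\sum Q_j$. Then $Q$ commutes with $\W$, and $\W(\mathrm{Id}-Q)$ has spectral radius at most $\beta^{-1}$.

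For part (i), on ${\rm Range}(Q_j)$ we write $\W Q_j=\zeta_jQ_j+N_j$, where $N_j$ is nilpotent with $N_j^{m_j}=0$ because the range is finite dimensional and has dimension $m_j$. Since $\zeta_j\neq 0$, the binomial formula gives $\W^rQ_j=\zeta_j^r(1+N_j/\zeta_j)^rQ_j$, and the series truncates at $l=m_j-1$. For $\zeta_1=1$ we need $N_1=0$ and $m_1=1$. I would get this from \eqref{dec12}: $\int \W^r f\,\d x=\int f\,\d x$, and $\W$ is a positive $L^1$-contraction. Hence $\Vert\W^r\Vert_{BV}$ cannot grow polynomially along the direction $u_1$, so a Jordan block at $1$ is excluded. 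Simplicity would follow from the uniqueness of the invariant density $u_1$ quoted after \eqref{dec7}, combined with the absence of a Jordan block.

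The same facts give \eqref{juli10}. The functional $f\mapsto\int_0^1 f\,\d x$ is a left eigenvector of $\W$ for the eigenvalue $1$ by \eqref{dec12} with $f\equiv 1$, because $K1=1$. Therefore $Q_1f=u_1\int f\,\d x$, since $\int u_1=1$.

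For part (ii) we split $\W^r=\W^rQ+\W^r(\mathrm{Id}-Q)$. The second term equals
\[
\frac{-1}{2\pi i}\oint_{|z|=\beta^{-\alpha}} z^r(\W-z)^{-1}(\mathrm{Id}-Q)\d z .
\]
This is justified because $(\W-z)^{-1}(\mathrm{Id}-Q)$ is analytic in $|z|>\beta^{-1}$ by Theorem \ref{thm:mero}, so the contour can be shrunk. For fixed $\alpha<1$ the integral is $\mathcal{O}(\beta^{-\alpha r})$ in $BV$ norm, since the resolvent is bounded on that circle. When $\alpha$ is close to $1$, some $\zeta_j$ may have $\beta^{-1}<|\zeta_j|<\beta^{-\alpha}$. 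Those terms also satisfy $\zeta_j^r\binom{r}{l}=\mathcal{O}(\beta^{-\alpha r})$ and may be absorbed into the error; alternatively, one can note that the stated sum should be read as the sum over those $j$ with $|\zeta_j|>\beta^{-\alpha}$. I read the displayed factor in \eqref{finlacom} as the binomial expression of part (i).

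For part (iii), multiplicity one means $Q_j$ has rank one and $N_j=0$. So $Q_jf=u_j\hat U^{(j)}(f)$ with $u_j$ spanning ${\rm Range}(Q_j)$ and $\hat U^{(j)}\in BV^*$. The biorthogonality $\hat U^{(j)}(u_k)=\delta_{jk}$ holds because $Q_jQ_k=\delta_{jk}Q_j$. To see that $u_j\in{\rm Range}(\Pi)$, I would use the formula of Theorem \ref{thm:mero}: the residue at $\zeta_j$ comes only from $\Pi(\beta^{-1}\mathcal{P}_n-z)^{-1}\Pi$, whose range lies in ${\rm Range}(\Pi)$. Estimate \eqref{juli23} then follows from (ii) applied to $f$.

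Part (iv) is the main obstacle. In the Parry–Pisot case only $\zeta_1=1$ lies outside $|z|\le\beta^{-1}$, but the circle $|z|=\beta^{-1}$ is in the essential spectrum, so contour shrinking yields only $\beta^{-\alpha r}$. To reach $\beta^{-r}$ exactly, I would try to show directly that $\beta^r\W^r(\mathrm{Id}-Q_1)$ is uniformly bounded on $BV$. The tool is the Lasota–Yorke type identity ${\rm Var}(\W f)\le\beta^{-1}{\rm Var}(f)+(\text{boundary terms})$. The boundary terms are given by values of $f$ at the finitely many points $b_k$ and are captured by the finite dimensional matrix $\mathcal{P}_n$ through $\Pi$. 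Once the $u_1$ component is removed, $\beta^{-1}\mathcal{P}_n$ has spectral radius $<\beta^{-1}$, because the other roots of $p$ have modulus less than $1$. This gives a geometric bound $\beta^{-r}(C+\sum_k|\lambda_k|^k\cdots)$ with summable corrections. Writing out this recursion carefully, using the decomposition $\W=\Pi\W\Pi+\W(\mathrm{Id}-\Pi)$ together with $\W\Pi=\Pi\W\Pi$, is where I expect most of the work.
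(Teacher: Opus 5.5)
Parts (i)--(iii) are correct and follow the paper's argument with cosmetic substitutions. You shrink the Cauchy contour to $|z|=\beta^{-\alpha}$ where the paper applies Gelfand's formula to $\W(1-Q)$; this is the same estimate. You get \eqref{juli10} from the fact that $f\mapsto\int_0^1 f\d x$ is a left eigenvector, while the paper lets $r\to\infty$ in \eqref{finlacom} and uses preservation of the integral. You get $m_1=1$ and $N_1=0$ from positivity and uniqueness of the invariant density. That step silently needs the lattice fact that a real fixed point $g$ satisfies $\W|g|=|g|$. The paper's $Q_1f=u_1C(f)$ instead rests on $\beta$ being a simple root of $p$ (Lemma \ref{lemmahc1}) together with ${\rm Range}(Q_1)\subset{\rm Range}(\Pi)$.

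One justification in (iii) needs repair. The residue of $(\W-z)^{-1}$ at $\zeta_j\neq 1$ does not come only from $\Pi(\beta^{-1}\mathcal{P}_n-z)^{-1}\Pi$. The off-diagonal block of \eqref{hol4} also has a pole there, which is why \eqref{chiuj} contains the extra term $-\Pi q_j\Pi\W^{r_0}A(\zeta_j)^{-1}(1-\Pi)$. If $Q_j=\Pi q_j\Pi$ held, $\hat U^{(j)}$ would be integration against a function in ${\rm Range}(\Pi)$, contradicting Theorem \ref{thm:left}(iv). Your conclusion $u_j\in{\rm Range}(\Pi)$ still holds, because every term of \eqref{chiuj} begins with $\Pi$. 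Equivalently: $(1-\Pi)\W=(1-\Pi)\W(1-\Pi)$, and on $(1-\Pi)BV$ this operator has spectral radius at most $\beta^{-1}$ by \eqref{fejl3}. Hence any $g$ with $(\W-\zeta_j)^k g=0$ satisfies $(1-\Pi)g=0$.

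Part (iv) is where your proposal stops at a plan, but no new idea is needed. The estimate is exactly the first alternative of Lemma \ref{lemmap2}, and the paper's proof is to apply it to $g=f-u_1\int_0^1 f\d x$. This $g$ has zero integral, satisfies $\W^r g=\W^r f-u_1\int_0^1 f\d x$, and obeys ${\rm Var}_0^1(g)\le(1+{\rm Var}_0^1(u_1))\Vert f\Vert_{BV}$. So \eqref{jun1} gives the claim without any loss in $\alpha$.

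The recursion you sketch is precisely how that lemma is proved:
\begin{itemize}
\item Lemma \ref{lem1} contracts the variation by exactly $\beta^{-1}$ after subtracting the single correction $f(1)\beta^{-1}\chi_{[0,b_1]}$. Only the value at $1$ enters, and having the constant $\beta^{-1}$ with no extra factor is what makes the rate sharp.
\item The corrections accumulate as in \eqref{gem} inside ${\rm Span}\{\chi_{[0,b_k]}\}$ with zero $u_1$-component.
\item The resulting sum satisfies $\sum_k\beta^{-(r-k)}(|\lambda_*|/\beta)^k\le\beta^{-r}(1-|\lambda_*|)^{-1}$.
\end{itemize}

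Your block-decomposition variant also closes. Use \eqref{fejl3} for $(1-\Pi)\W^r(1-\Pi)$, and note that $\int_0^1$ annihilates ${\rm Range}(\Pi\W(1-\Pi))$. Then the off-diagonal block of $\W^r-Q_1$ reduces to the same geometric sum. However, \eqref{fejl3} itself comes from Lemma \ref{lemmap1}, i.e.\ from the same recursion.
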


The proof of this corollary is given in Section \ref{proof-coro}. 

\subsection{The eigenvectors of $\W^*$ in $BV([0,1])^*$}
Here we want to give an ``explicit" formula for the functionals $\hat{U}^{(j)}$ from \eqref{juli23}. Before we can do that we need to introduce some notation, which will also play an important role in several other proofs. 

We split the interval $[0,1]$ as
\begin{align*}
    t_{k_1}^{j_1} := 
    \begin{cases}
\frac{j_1}{\beta} & 0 \leq j_1 \leq a_{0}-1  \qquad \textup{when } k_1 = 1 \\
\frac{a_0}{\beta} + \frac{j_1}{\beta^2} & 0 \leq j_1 \leq a_{1}-1 \qquad \textup{when } k_1 = 2 \\
\frac{a_0}{\beta} + \frac{a_1}{\beta^2} + \frac{j_1}{\beta^3} & 0 \leq j_1 \leq a_{2}-1 \qquad \textup{when } k_1 = 3 \\
\hspace{1cm}\vdots & \\
\frac{a_0}{\beta} + \frac{a_1}{\beta^2} + \cdots + \frac{a_{n-2}}{\beta^{n-1}} + \frac{j_1}{\beta^n} & 0 \leq j_1 \leq a_{n-1}-1 \qquad \textup{when } k_1 = n
    \end{cases}
\end{align*}
where we use the convention that if $a_{i-1}=0$, for $i \in  \{2,\ldots,n-1\}$, then we omit the case $k_1=i$, since this implies that we have no intervals of length $\beta^{-i}$. We also have 
\begin{align*}&t_k^{a_{k-1}-1} +\beta^{-k}=t_{k+1}^0,\quad a_{k-1}a_k>0,\quad 1\leq k\leq n-1,\\ 
& t_n^{a_{n-1}-1}+\beta^{-n}=1.
\end{align*} 
\begin{figure}[!h]
\begin{center}

\begin{tikzpicture}
  \draw[scale=10,-] (0, 0) -- (1, 0) node[right] {};
  \draw[-] (0, -0.1) -- (0, 0.1) node[below] {};
  \node[] at (0, -0.4)   (a) {$t_1^0=0$};
  \draw[-] (10, -0.1) -- (10, 0.1) node[below] {};
  \node[] at (10, -0.4)   (a) {$1$};

  \draw[-] (1, -0.1) -- (1, 0.1) node[below] {};
  \node[] at (1, -0.4)   (a) {$t_1^{1}$};
  
  \draw[-] (2, -0.1) -- (2, 0.1) node[below] {};
  \node[] at (2, -0.4)   (a) {$\cdots$};
  \draw[-] (3, -0.1) -- (3, 0.1) node[below] {};
  \node[] at (3, -0.4)   (a) {$t_2^0=\frac{a_0}{\beta}$};

  \draw[-] (3.7, -0.1) -- (3.7, 0.1) node[below] {};

   \draw[-] (4.2, -0.1) -- (4.2, 0.1) node[below] {};
  \node[] at (4.2, -0.4)   (a) {$\cdots$};

    \draw[-] (5.1, -0.1) -- (5.1, 0.1) node[below] {};
  \node[] at (5.6, -0.4)   (a) {$t_3^0=\frac{a_0}{\beta}+\frac{a_1}{\beta^2}$};

  \draw[-] (5.7, -0.1) -- (5.7, 0.1) node[below] {};

    \draw[-] (6.3, -0.1) -- (6.3, 0.1) node[below] {};

  \node[] at (7.3, -0.4)   (a) {$\cdots$};

   \draw[-] (8.5, -0.1) -- (8.5, 0.1) node[below] {};
  \node[] at (8.5, -0.4)   (a) {$t_n^0$};

\draw[-] (9, -0.1) -- (9, 0.1) node[below] {};
  \node[] at (9, -0.4)   (a) {};

  \draw[-] (9.5, -0.1) -- (9.5, 0.1) node[below] {};
  \node[] at (9.5, -0.4)   (a) {$\cdots$};

  \draw[black, decorate, decoration={brace, amplitude=5, raise=5}] (0, 0) -- (1, 0) node[pos=0.5, above=8] {$\frac{1}{\beta}$};

  \draw[black, decorate, decoration={brace, amplitude=5, raise=5}] (3.0, 0) -- (3.7, 0) node[pos=0.5, above=8] {$\frac{1}{\beta^2}$};

  \draw[black, decorate, decoration={brace, amplitude=5, raise=5}] (5.1, 0) -- (5.7, 0) node[pos=0.5, above=8] {$\frac{1}{\beta^3}$};
  \draw[black, decorate, decoration={brace, amplitude=5, raise=5}] (8.5, 0) -- (9, 0) node[pos=0.5, above=8] {$\frac{1}{\beta^n}$};
  
\end{tikzpicture}
\caption{Plot of the first splitting of the interval $[0,1]$.}
\label{fig:splitting}
\end{center}
\end{figure}

A second splitting of each subinterval is given by
\begin{align*}
    t_{k_1k_2}^{j_1j_2} := t_{k_1}^{j_1} + \beta^{-k_1} t_{k_2}^{j_2}
\end{align*}
and after $m$ splittings we have
\begin{align*}
    t_{k_1k_2\cdots k_m}^{j_1j_2 \cdots j_m} := t_{k_1k_2\cdots k_{m-1}}^{j_1j_2 \cdots j_{m-1}} + \beta^{-(k_1 + k_2 + \cdots + k_{m-1})} t_{k_m}^{j_m}.
\end{align*}
To simplify notation, we write
\begin{equation}\label{thekeys}
\begin{aligned}
    \bk_m &\coloneqq (k_1,k_2,\ldots,k_m), \\
    \bj_m &\coloneqq (j_1,j_2,\ldots,j_m), \\
    t_{\bk_m}^{\bj_m} &\coloneqq t_{k_1k_2\cdots k_m}^{j_1j_2 \cdots j_m}, \\
    \vert \bk_m \vert & \coloneqq k_1 + k_2+ \cdots + k_m,\\
    m&\leq \vert \bk_m \vert \leq mn.
\end{aligned}
\end{equation}

We will use the notation 
  \begin{equation*}
      \sum_{t_{\bk_m}^{\bj_m}}:=\sum_{k_1=1}^n \sum_{j_1=0}^{a_{k_1-1}-1} \cdots \sum_{k_m=1}^n \sum_{j_m=0}^{a_{k_m-1}-1}
  \end{equation*}
  with the same convention that if $a_{k-1}=0$ then $k$ does not appear in the sum.

\begin{theorem}\label{thm:left} Let  $\zeta=\lambda/\beta$ be an eigenvalue with  $1< |\lambda|\leq\beta$ and $\lambda$ is a root of the Parry polynomial which satisfies:  
\begin{equation}\label{aug1}
1=\sum_{j=1}^{n} a_{j-1}/\lambda^j=\sum_{k=1}^{n}\lambda^{-k}\sum_{j_k=0}^{a_{k-1}-1} 1.
\end{equation}
Let $f\in BV([0,1])$. For $m\geq 1$ we define
\begin{equation*}
    \hat{U}_m(f):=\sum_{t_{{\bk_m}}^{{\bf j}_m}} \lambda^{-|\bk_m|} f\big (t_{\bk_m}^{{\bf j}_m}\big ).
\end{equation*}
Then:
\begin{enumerate}[label=(\roman*)]
\item $\hat{U}(f):=\lim_{m\to\infty} \hat{U}_m(f)$ exists, it defines a bounded functional on ${BV}([0,1])$ and  
\begin{equation}\label{aug33}
    \hat{U}(\W f)=\zeta \, \hat{U}(f).
\end{equation}
When $\zeta=1$ then $\hat{U}(f)=\int_0^1 f(x) \d x$. 
\item Take any $y\in (0,1)$ with its greedy $\beta$-decomposition $y=\sum_{j\geq 1}\frac{y_{j-1}}{\beta^j}$ where $y_{j-1} = \lfloor\beta T^{j-1}(y)\rfloor$, $j\geq 1$. Then 
\begin{equation}\label{marvel}
\hat{U}(\chi_{[0,y]})=\sum_{j\geq 1}\frac{y_{j-1}}{\lambda^j}.
\end{equation}
Formula \eqref{marvel} coincides with the one obtained in \cite[Proposition 4.2]{Su}. If $h$ is an absolutely continuous function with support in $[0,1]$ then 
\begin{equation}\label{aug31}\hat{U}(h) = h(1) - \sum_{j=1}^\infty \lambda^{-j}\int_0^1y_{j-1}h'(y)dy.
\end{equation}
Also, by fixing $u= \sum_{j=0}^{n-1} \lambda^{-j-1} \chi_{[0,b_j]}$ so that $\W u=\zeta u$ (see \eqref{sep10}), then we have
\begin{equation}\label{sep11}
\hat{U}(u) =h_n'(\lambda),\quad h_n(z):=1-\sum_{k=0}^{n-1}a_k/z^{k+1}= z^{-n}p(z).
\end{equation}
Thus $\hat{U}(u)\neq 0$ if and only if $\lambda$ is a simple root of the Parry polynomial. 

\item If $\beta^{-1}<|\zeta|<1$ then 
\begin{equation*}
    (0,1)\ni y\mapsto \hat{U}(\chi_{[0,y]})\in \C
\end{equation*}
is nowhere differentiable. See also \cite[Theorem 4.3(v)]{Su}. 
\item If $\beta^{-1}<|\zeta|<1$ there exists no complex Radon measure $\mu$ so that $\hat{U}(f)=\int_0^1 f(x) \d \mu(x)$. See also \cite[Theorem 4.5]{Su}
\item Let $\beta^{-1}<|\zeta|<1$ be a simple eigenvalue with an eigenvector as in \eqref{sep10} where $c_1=1/\lambda$, and $\hat{U}(u_\zeta)=h_n'(\lambda)$ is given in \eqref{sep11}. There exists a sequence $\{C_s\}_{s\geq 1}\subset \C$ where each $C_s$ may be computed using a certain rank one Riesz projection given by a matrix of size $(s+1)n \times (s+1)n$ such that if $q$ is a polynomial with degree $M\geq 0$ then with $h_n$ from \eqref{sep11} we have
\begin{equation}\label{cdh291}
\begin{aligned}
   & \hat{U}(q) = \sum_{k_1=1}^n \sum_{j_1=0}^{a_{k_1-1}-1} \zeta^{-k_1}\bigg\{ \int_{t_{k_1}^{j_1}}^{t_{k_1}^{j_1}+\beta^{-k_1}} q(u) \d u  \\
        &+ h_n'(\lambda)\sum_{s=1}^{M+1} C_s \beta^{-k_1 s}\Big( q^{(s-1)}\big(t_{k_1}^{j_1}+\beta^{-k_1}\big)-q^{(s-1)}\big(t_{k_1}^{j_1}\big)\Big)  \frac{1}{s!}\Vert B_s \Vert_{L^1([0,1])} \bigg \}.
\end{aligned}
\end{equation}
\end{enumerate}
\end{theorem}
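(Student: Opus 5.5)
The plan is to work throughout with the self-similar structure of the partition $\{t_{\bk_m}^{\bj_m}\}$. The intervals $I_{\bk_m}^{\bj_m}=[t_{\bk_m}^{\bj_m},t_{\bk_m}^{\bj_m}+\beta^{-|\bk_m|})$ tile $[0,1)$, and they refine those of generation $m-1$. We have the recursion $t_{\bk_m}^{\bj_m}=t_{\bk_{m-1}}^{\bj_{m-1}}+\beta^{-|\bk_{m-1}|}t_{k_m}^{j_m}$, and \eqref{aug1} gives $\sum_{k,j}\lambda^{-k}=1$.

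\textbf{Step 1: convergence, boundedness, and the case $\zeta=1$.} I will estimate the increment
\[
\hat U_m(f)-\hat U_{m-1}(f)=\sum_{t_{\bk_{m-1}}^{\bj_{m-1}}}\lambda^{-|\bk_{m-1}|}\sum_{k_m,j_m}\lambda^{-k_m}\Big(f(t_{\bk_m}^{\bj_m})-f(t_{\bk_{m-1}}^{\bj_{m-1}})\Big).
\]
The inner bracket is bounded by $\mathrm{Var}(f;I_{\bk_{m-1}}^{\bj_{m-1}})$, and $\sum_k a_{k-1}|\lambda|^{-k}$ is a fixed constant, so the increment is at most $C\max_{\bk_{m-1}}|\lambda|^{-|\bk_{m-1}|}\,\mathrm{Var}_0^1 f\le C|\lambda|^{-(m-1)}\mathrm{Var}_0^1f$, since $|\bk_{m-1}|\ge m-1$ and the intervals are disjoint. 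Because $|\lambda|>1$, this series is geometric. Hence the limit exists, and $|\hat U(f)|\le C\|f\|_{BV}$, using $|\hat U_1(f)|\le C\|f\|_\infty$. Strictly, I should choose representatives, e.g. right-continuous ones, so that point evaluations are meaningful; since the bound involves only the variation this is harmless. When $\lambda=\beta$ the sums $\hat U_m$ are Riemann sums over a partition with mesh at most $\beta^{-m}$, so $\hat U(f)=\int_0^1 f$.

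\textbf{Step 2: the eigenrelation \eqref{aug33}.} Write $(\W f)(t)=\beta^{-1}\sum_k f((t+k)/\beta)$. The points $(t_{\bk_m}^{\bj_m}+l)/\beta$, for $0\le l\le a_0$, are again partition points of generation $m+1$ with a first digit $k_1=1$ prepended. The top branch $l=a_0$ only survives when $t<b_1$, and iterating this produces the blocks with $k_1\ge2$; this is exactly the admissibility structure encoded by \eqref{eq:a0a1akassump}. Matching weights gives $\hat U_m(\W f)=\beta^{-1}\lambda\,\hat U_{m+1}(f)+(\text{boundary error}\to0)$, and in the limit this yields $\zeta\hat U(f)$. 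I expect the main obstacle to be this combinatorial bookkeeping, namely checking that the top branches reassemble precisely into the blocks $k_1=2,\dots,n$. I would first verify it on $f=\chi_{[0,y]}$ and then extend it by density of step functions in the relevant weak sense together with the BV bound.

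\textbf{Step 3: the explicit formulas in (ii)--(v).} For $\chi_{[0,y]}$, counting the partition points below $y$ generation by generation gives the greedy digits $y_{j-1}$, and hence \eqref{marvel}. Formula \eqref{aug31} then follows by writing $h(x)=h(1)-\int_0^1\chi_{[0,y]}(x)h'(y)\,dy$ and applying Fubini together with continuity of $\hat U$. Formula \eqref{sep11} is a direct computation of $\hat U(\chi_{[0,b_j]})$ via \eqref{marvel}, using the digits of $b_j$, which are $a_j,\dots,a_{n-1}$; summing against $\lambda^{-j-1}$ produces $\sum_k (k+1)a_k\lambda^{-k-2}=h_n'(\lambda)$.

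For (iii), the increments of $y\mapsto\hat U(\chi_{[0,y]})$ over the generation-$m$ intervals have size $|\lambda|^{-m}$, which is much larger than the lengths $\beta^{-m}$, and this rules out differentiability everywhere. For (iv), if a Radon measure $\mu$ represented $\hat U$, the same function would be of bounded variation; but the variation over generation $m$ is about $(\beta/|\lambda|)^m\to\infty$, a contradiction.

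For (v), I would Taylor-expand $q$ on each first-generation interval, using Bernoulli polynomials $B_s$. Applying the eigenrelation then reduces $\hat U$ on the polynomial space to a finite-dimensional problem, which is the matrix Riesz projection that produces the constants $C_s$.
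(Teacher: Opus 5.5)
Your outline is essentially correct. The convergence half of (i), and parts (ii) and (v), follow the paper's route:
\begin{itemize}
\item telescoping $\hat U_m-\hat U_{m-1}$ with \eqref{aug1} and the variation bound;
\item counting partition points below $y$ to get \eqref{marvel};
\item an Euler--Bernoulli expansion on first-generation intervals, combined with a rank one Riesz projection.
\end{itemize}

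Your arguments for (iii) and (iv) are genuinely different, and more economical. You use the identity $\hat U(\chi_I)=\lambda^{-|\bk_p|}$ for every generation-$p$ interval $I=[t_{\bk_p}^{\bj_p},t_{\bk_p}^{\bj_p}+\beta^{-|\bk_p|})$. This follows from nestedness, \eqref{aug1} and $|\hat U_m(\chi_{\{y\}})|\le|\lambda|^{-m}$. The paper instead perturbs the greedy digits of $y$, either lowering a nonzero digit or appending a $1$ to a finite expansion. That needs a case split, and tacitly requires that the perturbed strings are still greedy.

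To finish (iii) you only need one elementary remark. If $F$ is differentiable at $y$, then difference quotients over intervals containing $y$ and shrinking to $y$ tend to $F'(y)$, since they are convex combinations of one-sided quotients. The generation-$m$ interval containing $y$ gives quotients of modulus $(\beta/|\lambda|)^{|\bk_m|}\to\infty$.

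Your (iv) avoids both (iii) and Lebesgue's theorem on a.e.\ differentiability of $BV$ functions. The sum of $|F(t')-F(t)|$ over consecutive generation-$m$ points equals $\sum|\lambda|^{-|\bk_m|}$. This is at least $(\beta/|\lambda|)^m\sum\beta^{-|\bk_m|}=(\beta/|\lambda|)^m$, which is unbounded.

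\textbf{The step you leave open: the eigenrelation \eqref{aug33}.} The branch matching you describe is exactly what the paper's Lemma~\ref{lemma-relabeling} supplies. The top-branch points $a_0/\beta+\beta^{-1}t$ with $t<b_1$ are relabeled as partition points with $\tilde k_1\ge 2$ and $|\tilde\bk|=1+|\bk_m|$, but of varying generation. The mismatch of generations is then absorbed by the $\mathcal{O}(|\lambda|^{-m})$ variation bound.

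Your fallback actually gives a proof that avoids this lemma, provided you reorder things.
\begin{itemize}
\item Prove \eqref{marvel} first, by counting; this does not use \eqref{aug33}.
\item For $y\in(0,1]$ there is the exact identity $\W\chi_{[0,y]}=\beta^{-1}\big(y_0\chi_{[0,1]}+\chi_{[0,T_\beta y]}\big)$. For $y=1$, read $y_{j-1}=a_{j-1}$.
\item The greedy digits of $T_\beta y$ are $y_1,y_2,\dots$, so $\hat U(\W\chi_{[0,y]})=\beta^{-1}\big(y_0+\sum_{j\ge1}y_j\lambda^{-j}\big)=\zeta\,\hat U(\chi_{[0,y]})$. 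This uses $\hat U(\chi_{[0,1]})=1$, which is \eqref{aug1}.
\item Point masses satisfy both sides trivially, because $\hat U(\chi_{\{y\}})=0$.
\end{itemize}

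For the extension to all of $BV$, the norm bound $|\hat U(f)|\le C\Vert f\Vert_{BV}$ is not enough, because step functions are not norm dense in $BV$. Instead, approximate $f$ uniformly by step functions $s_k$ with ${\rm Var}_0^1(s_k)\le{\rm Var}_0^1(f)$. Then the uniform tail estimate $|\hat U(g)-\hat U_m(g)|\le C|\lambda|^{-m}{\rm Var}_0^1(g)$ from your Step 1 gives $\hat U(s_k)\to\hat U(f)$ and $\hat U(\W s_k)\to\hat U(\W f)$.

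Two related justifications need the same care:
\begin{itemize}
\item The fact that $\hat U$ vanishes on functions supported on a countable set with summable values is what shows $\hat U$ does not depend on the representative of $f$. The fact that your bound involves only the variation does not give this.
\item In \eqref{aug31}, interchange the finite sums $\hat U_m$ with the $y$-integral and use dominated convergence. Norm continuity of $\hat U$ does not suffice here, since $y\mapsto\chi_{[0,y]}$ is not Bochner measurable in $BV$.
\end{itemize}

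\textbf{Part (v).} The finite-dimensional space is not the polynomials, which are not $\W$-invariant. It is the span of the piecewise polynomials $H_{sn+j}$. You must invoke two facts:
\begin{itemize}
\item Lemma~\ref{lemmacrux}, which gives $\hat U(G_{s,k_1}^{j_1})=\zeta^{-k_1}\hat U(H_{sn+1})$;
\item $\hat U$ restricted to that span is a left $\zeta$-eigenvector of the restriction matrix, so $\hat U=\hat U\circ\pi_\zeta$ there.
\end{itemize}
The projection $\pi_\zeta$ has rank one because $\zeta$ is simple, and this produces $C_s$ through $\pi_\zeta H_{sn+1}=C_s u_\zeta$.
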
  
The proof of this theorem is given in Section \ref{proof-left}.
\begin{remark}
    Using \eqref{aug31} in order to compute $\hat{U}(q)$ in \eqref{cdh291} would need to compute infinitely many $y_j$'s as functions of $y$ and integrate them. On the other hand, \eqref{cdh291} only requires a finite number of computations. 
\end{remark}

\section{Acknowledgements}
This work was supported by The Danish Council for Independent Research -- Natural Sciences,
grant DFF--10.46540/2032-00005B.

\section{Proofs of the Main Results}

\subsection{Proof of Theorem~\ref{thm:pisot}}\label{sec3.3}

From \eqref{thekeys} we see that  $t_{\bk_m}^{\bj_m}$ and $t_{\bk_m}^{\bj_m} + \beta^{- \vert \bk_m \vert}$ are ``consecutive points''. 
From Appendix \ref{sec:eulerbernoulliformula} we have that the rescaled Bernoulli approximation formula applied to each interval in the splitting, i.e. with $b=t_{\bk_m}^{\bj_m} + \beta^{- \vert \bk_m \vert}$ and $a=t_{\bk_m}^{\bj_m}$ is given by
\begin{equation}\label{eq:rescaledeulerbernoulli}
    \begin{aligned}
     F(y)
     &=   \beta^{|\bk_m|} \int_{t_{\bk_m}^{\bj_m} }^{t_{\bk_m}^{\bj_m} + \beta^{- \vert \bk_m \vert} } F(u) \d u  \\
     &\qquad +\sum_{s=1}^N \beta^{-(s-1)|\bk_m|}\big ( F^{(s-1)}(t_{\bk_m}^{\bj_m} + \beta^{- \vert \bk_m \vert} )-F^{(s-1)}(t_{\bk_m}^{\bj_m} )\big ) \frac{\widetilde{B}_s\Big( \beta^{|\bk_m|}\big (y-t_{\bk_m}^{\bj_m}\big )\Big)}{s!}\\
     &\qquad +\mathcal{O}\Big( \beta^{-|\bk_m|N}\Vert F^{(N)}\Vert_{C([0,1])}\Big),\quad \forall y\in \big [t_{\bk_m}^{\bj_m}\, ,\, t_{\bk_m}^{\bj_m} + \beta^{- \vert \bk_m \vert}\big ].
    \end{aligned}
    \end{equation} 
\begin{definition}\label{Giz}
Let $B_s$ be the Bernoulli polynomials for $s \in \mathbb{N}_0$. We define
{\footnotesize    \begin{align*}
        G_{s,\bk_m}^{\bj_m} (x) \coloneqq \frac{\beta^{|\bk_m|}}{\Vert B_s \Vert_{L^1([0,1])}}B_s \Big( \beta^{|\bk_m|} \Big(x-t_{\bk_m}^{\bj_m}\Big)\Big) \, \chi_{\big[t_{\bk_m}^{\bj_m},t_{\bk_m}^{\bj_m} + \beta^{- \vert \bk_m \vert}\big]}(x).
    \end{align*}}
    Moreover, for $j \in \{1,\ldots,n\}$ and with the $b_j$'s from \eqref{dec1} we define 
\begin{align*}
    H_{sn+j}(x):= \frac{1}{b_{j-1}}\frac{B_{s}\big(\frac{1}{b_{j-1}} x \big)}{\Vert B_{s} \Vert_{L^1([0,1])}} \chi_{[0,b_{j-1}]}(x),\quad H_1=\psi_1=\chi_{[0,1]}.
\end{align*}
\end{definition}
  We now let $y$ in \eqref{eq:rescaledeulerbernoulli} to belong to $[0,1]$, the price being that we need to sum up all possible contributions coming from consecutive intervals.  Recall the notation 
  \begin{equation*}
      \sum_{t_{\bk_m}^{\bj_m}}:=\sum_{k_1=1}^n \sum_{j_1=0}^{a_{k_1-1}-1} \cdots \sum_{k_m=1}^n \sum_{j_m=0}^{a_{k_m-1}-1}
  \end{equation*}
  with the same convention that if $a_{k-1}=0$ then $k$ does not appear in the sum.
  
  Then given $m\geq 1$ we have $\vert \bk_m\vert \geq m$ and 
\begin{equation}\label{hor13}
    \begin{aligned}
        F(y) &= \sum_{t_{\bk_m}^{\bj_m}} \bigg\{  G_{0,\bk_m}^{\bj_m} (y)\, \int_{t_{\bk_m}^{\bj_m}}^{t_{\bk_m}^{\bj_m} + \beta^{- \vert \bk_m \vert}} F(u) \d u   \\
        &\qquad + \sum_{s=1}^N \beta^{-|\bk_m|s}\Big( F^{(s-1)}(t_{\bk_m}^{\bj_m} + \beta^{- \vert \bk_m \vert})-F^{(s-1)}(t_{\bk_m}^{\bj_{m}})\Big)  \frac{1}{s!}\Vert B_s \Vert_{L^1([0,1])} \, G_{s,\bk_m }^{\bj_m}(y)\bigg\} \\
        & \qquad +\mathcal{O}\Big(\beta^{-mN} \big \Vert F^{(N)}\Vert_{C([0,1])} \Big),\quad \forall y\in [0,1].
    \end{aligned}
    \end{equation}
By applying the transfer operator a sufficient number of times we can bring the Bernoulli polynomials from the small intervals in the splitting back to the full interval $[0,1]$, as shown in the following result (a proof of which can be found in Appendix \ref{sec:splittingofinterval}):
\begin{lemma}\label{lemmacrux}
Let $s \in \mathbb{N}_0$. Then for almost every $x\in [0,1]$ and 
  {\bf independently of the choice of} $\bj_m$ we have 
  \begin{align*}
        \Big(\mathscr{P}^{|\bk_m|} G_{s,\bk_m}^{\bj_m}\Big)(x) = {H}_{sn+1}(x).
    \end{align*}
\end{lemma}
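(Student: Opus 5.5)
The idea is to show that $T_\beta^{|\bk_m|}$, restricted to the cell $I:=\big[t_{\bk_m}^{\bj_m},\,t_{\bk_m}^{\bj_m}+\beta^{-|\bk_m|}\big]$, is the affine bijection $x\mapsto \beta^{|\bk_m|}\big(x-t_{\bk_m}^{\bj_m}\big)$ onto $[0,1]$ (up to endpoints). The transfer operator then just undoes the rescaling built into $G_{s,\bk_m}^{\bj_m}$.

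\emph{Step 1: one application of $\W$.} Let $f$ be supported in an interval $J$ of length $\ell\le\beta^{-1}$ with $J\subset[l/\beta,(l+1)/\beta]$ for some integer $0\le l\le a_0$. In \eqref{eq:mathcalP} only the term with $k=l$ can be nonzero, apart from endpoints, which form a null set. Hence $(\W f)(x)=\beta^{-1}f\big((x+l)/\beta\big)$. This is a rescaled copy of $f$ supported on the image $T_\beta(J)=\beta J-l$, which has length $\beta\ell$. Iterating, $\W^r f(x)=\beta^{-r}f(\phi(x))$ whenever $T_\beta^i(J)$ lies inside a single branch interval $[l_i/\beta,(l_i+1)/\beta]$ for every $0\le i<r$. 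Here $\phi$ is the affine inverse of $T_\beta^r|_J$.

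\emph{Step 2: level-one cells ($m=1$).} Take $J=[t_k^j,\,t_k^j+\beta^{-k}]$ with $0\le j\le a_{k-1}-1$, and set
\begin{equation*}
J_i:=\Big[\tfrac{a_i}{\beta}+\cdots+\tfrac{a_{k-2}}{\beta^{k-1-i}}+\tfrac{j}{\beta^{k-i}},\ \text{same}+\beta^{-(k-i)}\Big].
\end{equation*}
I claim that $T_\beta^i(J)=J_i$ for $0\le i\le k-1$, and that $J_i\subset[a_i/\beta,(a_i+1)/\beta]$. The inclusion reduces to the upper endpoint bound
\begin{equation*}
\frac{a_{i+1}}{\beta^2}+\cdots+\frac{j+1}{\beta^{k-i}}\le \frac{1}{\beta}\Big(\frac{a_{i+1}}{\beta}+\cdots+\frac{a_{k-1}}{\beta^{k-1-i}}\Big)\le \frac{b_{i+1}}{\beta}<\frac1\beta .
\end{equation*}
This uses $j+1\le a_{k-1}$, \eqref{dec1}, and $b_{i+1}\in(0,1)$, or $b_{i+1}=0$ when $i+1=n$. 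For $i=k-1$ we get $J_{k-1}=[j/\beta,(j+1)/\beta]$, and one more step maps it onto $[0,1]$. So $T_\beta^k$ restricted to $J$ is $x\mapsto\beta^k(x-t_k^j)$, with each intermediate image inside one branch.

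\emph{Step 3: induction on $m$.} By definition $t_{\bk_m}^{\bj_m}=t_{k_1}^{j_1}+\beta^{-k_1}t_{(k_2,\dots,k_m)}^{(j_2,\dots,j_m)}$. Hence the level-$m$ cell is $t_{k_1}^{j_1}+\beta^{-k_1}\times(\text{level-}(m-1)\text{ cell})$, which lies inside the level-one cell of $(k_1,j_1)$. By Step 2, $T_\beta^{k_1}$ acts there as $x\mapsto \beta^{k_1}(x-t_{k_1}^{j_1})$, and so maps the level-$m$ cell affinely onto the level-$(m-1)$ cell indexed by $(k_2,\dots,k_m;j_2,\dots,j_m)$. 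All intermediate images are subsets of images from Step 2, so they stay inside single branches. Induction then gives that $T_\beta^{|\bk_m|}$ restricted to $I$ is $x\mapsto\beta^{|\bk_m|}\big(x-t_{\bk_m}^{\bj_m}\big)$, with the branch condition of Step 1 holding at every step.

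\emph{Step 4: conclusion.} By Step 1 with $r=|\bk_m|$, for a.e.\ $x\in[0,1]$,
\begin{equation*}
\big(\W^{|\bk_m|}G_{s,\bk_m}^{\bj_m}\big)(x)=\beta^{-|\bk_m|}\,G_{s,\bk_m}^{\bj_m}\big(t_{\bk_m}^{\bj_m}+\beta^{-|\bk_m|}x\big)=\frac{B_s(x)}{\Vert B_s\Vert_{L^1}}\chi_{[0,1]}(x).
\end{equation*}
Since $b_0=1$, this equals $H_{sn+1}(x)$, and it is visibly independent of $\bj_m$.

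The main obstacle is Step 2: checking that every intermediate image stays inside a single branch interval, so that only one term of \eqref{eq:mathcalP} survives. This is exactly where the Parry structure \eqref{dec1}, and in particular $b_{i}<1$, enters.
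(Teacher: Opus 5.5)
Your proposal is correct and follows essentially the paper's route: induction on $m$ peeling off the first index $k_1$, where each application of $\W$ has a single surviving term because the support lies in one branch interval, so that $\W^{|\bk_m|}$ simply undoes the affine rescaling built into $G_{s,\bk_m}^{\bj_m}$. Tracking the $T_\beta$-orbit $J_0,\dots,J_{k-1}$ of the cell lets you actually verify the single-branch condition via $b_{i+1}<1$, which the paper only asserts in passing. That inclusion holds for $i\le k-2$; at $i=k-1$ the branch is $j\le a_{k-1}-1\le a_0-1$, as you handle separately.
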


{ The next step is to apply the above identity to \eqref{hor13} while controlling  the $L^\infty$ norm of the last error term. We use that $(a_0+1)^r/\beta^r$ is an upper bound for the norm of $\mathscr{P}^r$ on $L^\infty$ (see Lemma~\ref{lem:ks11} (iii)).  Since $1\leq a_0<\beta<a_0+1$ implies $(a_0+1)/\beta<2$, then we may write:
\begin{align}\label{cdh20}
   & \big(\mathscr{P}^r F\big)(y) = \sum_{t_{\bk_m}^{\bj_m}} \bigg\{  \big(\mathscr{P}^{r-|\bk_m|}H_1\big)(y) \int_{t_{\bk_m}^{\bj_m}}^{t_{\bk_m}^{\bj_m} + \beta^{- \vert \bk_m \vert}} F(u) \d u  \\\nonumber 
        &+ \sum_{s=1}^N \beta^{-|\bk_m|s}\Big( F^{(s-1)}\big(t_{\bk_m}^{\bj_m} + \beta^{- \vert \bk_m \vert}\big)-F^{(s-1)}\big(t_{\bk_m}^{\bj_{m}}\big)\Big)  \frac{1}{s!}\Vert B_s \Vert_{L^1([0,1])} \big(\mathscr{P}^{r-|\bk_m|} H_{sn+1}\big)(y)\bigg\} \\ \nonumber 
        & +\mathcal{O}\Big(2^r\beta^{-mN} \big \Vert F^{(N)}\Vert_{C([0,1])} \Big).
\end{align}
The first term is treated with Lemma~\ref{lem:ks11} $(iv)$, and also using that $\vert \bk_m\vert \leq nm$, which yields
\begin{align*}
    \sum_{t_{\bk_m}^{\bj_m}}  \big(\mathscr{P}^{r-|\bk_m|}H_1\big)(x) \int_{t_{\bk_m}^{\bj_m}}^{t_{\bk_m}^{\bj_m} + \beta^{- \vert \bk_m \vert}} \hspace{-0.5cm}F(u) \d u = u_1(x)\int_0^1 F(u)\d u + \mathcal{O}\bigg( \bigg(\frac{\vert \lambda_*\vert}{\beta}\bigg)^{r-mn}\Vert F \Vert_{C([0,1])}\bigg).
\end{align*}
We split the second term from \eqref{cdh20} in $s=1$ and $s \geq 2$. When $s = 1$ we use Lemma \ref{lemma26} along with the Mean Value Theorem to get
\begin{align*}
\sum_{t_{\bk_m}^{\bj_m}} & \beta^{-|\bk_m|}\Big( F\big(t_{\bk_m}^{\bj_m} + \beta^{- \vert \bk_m \vert}\big)-F\big(t_{\bk_m}^{\bj_{m}}\big)\Big)  \Vert B_1\Vert_{L^1([0,1])} \big(\mathscr{P}^{r-|\bk_m|} H_{n+1}\big)(x) \\
    &= 4^{-1}\beta^{-r} \big (F(1)-F(0) \big ) v_1(x)+\beta^{-m} \Vert F'\Vert_{C([0,1])} \mathcal{O}\bigg( \bigg(\frac{|\lambda_*|}{\beta}\bigg)^{r-nm}\bigg) .
\end{align*}

For $s \geq 2$ we again use the Mean Value Theorem, Lemma~\ref{lem:ks11} $(v)$, and $|\bk_m|\geq m$ to get
\begin{align*}
    \sum_{t_{\bk_m}^{\bj_m}} &\sum_{s=2}^N \beta^{-|\bk_m|s}\Big( F^{(s-1)}\big(t_{\bk_m}^{\bj_m} + \beta^{- \vert \bk_m \vert}\big)-F^{(s-1)}\big(t_{\bk_m}^{\bj_{m}}\big)\Big)  \frac{1}{s!}\Vert B_s \Vert_{L^1([0,1])} \big(\mathscr{P}^{r-|\bk_m|} H_{sn+1}\big)(y) \\
    &=\max_{\bk_m}\sum_{s=2}^N \beta^{-s\vert \bk_m\vert } \Vert F^{(s)}\Vert_{C([0,1])} \mathcal{O}\bigg( \bigg(\frac{1}{\beta}\bigg)^{r-\vert \bk_m\vert }\bigg) =\mathcal{O}\bigg( \bigg(\frac{1}{\beta}\bigg)^{r+m}\Vert F \Vert_{C^N([0,1])}\bigg).
\end{align*}
Combining all of this leads to
\begin{align*}
    \big(\mathscr{P}^r F\big)(x) &= u_1(x)\int_0^1 F(u)\d u + \frac{F(1)-F(0)}{4 \beta^r} v_1(x)  +  \mathcal{O}\bigg( \bigg(\frac{\vert \lambda_*\vert}{\beta}\bigg)^{r-mn}\Vert F \Vert_{C^N([0,1])}\bigg) \\ &\qquad + \beta^{-m} \Vert F'\Vert_{C([0,1])} \mathcal{O}\bigg( \bigg(\frac{|\lambda_*|}{\beta}\bigg)^{r-mn}\bigg) + \mathcal{O}\bigg( \bigg(\frac{1}{\beta}\bigg)^{r+m}\Vert F \Vert_{C^N([0,1])}\bigg) \\
    &\qquad +\mathcal{O}\Big(2^r\beta^{-mN} \big \Vert F^{(N)}\Vert_{C([0,1])} \Big).
\end{align*}
We will now show that there exists an $\epsilon>0$ as stated in the theorem, provided $N$ is large enough.  Let $\delta \in (0,1)$. We first consider the last error term from above, for which we ask that  $2^r \beta^{-mN} = \beta^{-r(1+\delta)}$. This gives
\begin{align*}
    m_{\delta,r,N}= r\frac{\ln(2) + (1+\delta)\ln(\beta)}{N\ln(\beta)}
\end{align*}
which is less than $r/n$ if $N$ is large enough and $r>n$. We may ignore the fact that $m_{\delta,r,N}$ might not be a natural number. 
Next, we consider the first error term for which we ask 
\begin{equation*}
    \vert \lambda_* \vert^{r-nm_{\delta,r,N}} \beta^{-r+nm_{\delta,r,N}} \leq \beta^{-r(1+\delta)}.
\end{equation*} 
This holds for all $r$ if  
\begin{align*} 
    \delta \leq \frac{\ln(\vert \lambda_*\vert^{-1}) \Big( 1 - \frac{n\ln(2\beta)}{N \ln(\beta)}\Big) - \frac{n\ln(2\beta)}{N}}{\Big( \frac{n\ln(\vert \lambda_* \vert^{-1})}{N} + \frac{n}{N}\ln(\beta) + \ln(\beta) \Big)},
\end{align*}
a condition which can be satisfied if $N\geq N_0$ with $N_0$ large enough and $\delta <\tfrac{\ln(\vert \lambda_*\vert^{-1})}{\ln(\beta)}$. 

The second error term has an extra factor $\beta^{-m}$ and is automatically less than the first one. The third error term is already of the form 
$\beta^{-r-m_{\delta,r,N}}$ which allows us to choose 
\begin{align*}
 \varepsilon =\min\bigg \{\delta,   \frac{\ln(2) + (1+\delta)\ln(\beta)}{N_0\ln(\beta)}\bigg \} .
\end{align*}
and we are done. \qed
}

\subsection{Proof of Theorem~\ref{nopisot2}}\label{sec3.5}
\begin{enumerate}[label=(\roman*)]
    \item Here $n=4$, $a_0=3$, $a_1=2$, $a_2=0$ and $a_3=2$. Since $a_0$ is strictly larger than the other coefficients, they form a finite Parry-sequence. {
The corresponding Parry polynomial is $p(z)=z^4-(3 z^3+2z^2+2)$ with $3<\beta<4$. Also, $p(-1)=0$ and $p'(-1)\neq 0$ thus $-1$ is a simple root. In particular, $p(z)$ is not Parry-Pisot. 
The function $f(x)=p(x)/x^3=x-3-2/x-2/x^3$ has the same zeros as $p$, and $f'(x) >0$ on $(-\infty,0)$, so $f$ and $p$ have no other negative roots besides $-1$. Since $\beta$ is the only positive root, the other two roots of $p$ must have a nonzero imaginary part and $z_3=\overline{z_4}$. Moreover, because the product of all four roots equals $-2$ we must have

\begin{equation*}
    \vert z_3 \vert ^2= \vert z_4 \vert^2=2/\beta
\end{equation*}
hence $\vert z_3 \vert=\vert z_4\vert <1$. We note that $\beta$ is nevertheless a ``plain" Pisot number via the non-Parry polynomial 
\begin{equation*}
    \frac{p(z)}{z+1}=z^3 - 4 z^2 + 2 z - 2,
\end{equation*}
whose roots are $\beta$, $z_3$ and $z_4$. 

This choice of coefficients corresponds to a restriction matrix $\mathcal{P}_4$ (see \eqref{Pn}) which has $\lambda_1=\beta$, $\lambda_2=-1$, while $|\lambda_3|=|\lambda_4|<1$. 

 Here we have $\vert \lambda_*\vert=1$. The matrix ${\mathcal{A}}_{4,1}$ (the first block in \eqref{cdh30}) has eigenvalues $1$, $-1/\beta$, and $|\lambda_3/\beta|=|\lambda_4/\beta|<\beta^{-1}$. One can compute the $4\times 4$ Riesz projection corresponding to the eigenvalue $-1/\beta$, and let $p_{j1}$ with $1\leq j\leq 4$ be the elements of the first column of this projection. Then it turns out that 
$u_2(x)=\sum_{j=1}^4 p_{j1} \, H_j(x)$
is a piecewise constant eigenfunction corresponding to $-1/\beta$ for which we also have 
\begin{equation*}
    (\mathscr{P}^r H_1)(x) =u_1(x)+ (-1)^r \beta^{-r} u_2(x) +\mathcal{O}\bigg(\bigg(\frac{\vert z_3\vert}{\beta}\bigg)^r\bigg).
\end{equation*}

The function $v_1$ is constructed like in Theorem \ref{thm:pisot} with $\mathscr{P}$ restricted to the larger 
subspace generated by $\{H_j\}_{j=1}^8$ and it corresponds to the simple eigenvalue equal to $+1/\beta$ coming from the second block of the matrix in \eqref{cdh30}. 

\item If $q$ is a polynomial of degree $M$ then by choosing $n=4$, $m=1$ and $N=M+1$ in formula \eqref{cdh20} we see that the error term is identically zero and 
\begin{align}\label{cdh29}
   & \big(\mathscr{P}^r q\big)(y) = \sum_{k_1=1}^n \sum_{j_1=0}^{a_{k_1-1}-1}\bigg\{  \big(\mathscr{P}^{r-k_1}H_1\big)(y) \int_{t_{k_1}^{j_1}}^{t_{k_1}^{j_1}+\beta^{-k_1}} q(u) \d u  \\\nonumber 
        &+ \sum_{s=1}^{M+1} \beta^{-k_1 s}\Big( q^{(s-1)}\big(t_{k_1}^{j_1}+\beta^{-k_1}\big)-q^{(s-1)}\big(t_{k_1}^{j_1}\big)\Big)  \frac{1}{s!}\Vert B_s \Vert_{L^1([0,1])} \big(\mathscr{P}^{r-k_1} H_{4s+1}\big)(y)\bigg \}.
\end{align}
Note that the term with $s=M+1$ is zero because $q^{(M)}$ is constant. We have reduced the problem to the analysis of functions of the type $\mathscr{P}^{r-k_1} H_{4s+1}$ with $0\leq s\leq M$ and $k_1\in \{1,2,4\}$ ($k_1=3$ is missing because $a_2=0$). 

Thus  $\mathscr{P}^r q$ can be written as a linear combination of  eigenvectors of a  matrix as in \eqref{cdh30} where $\nu=M+1$ and $n=4$. The eigenvalue $1$ only comes from $\mathscr{P}^{r-k_1}H_1$, while $\mathscr{P}^{r-k_1} H_{4s+1}$ with $s\geq 1$ have no components in the subspace spanned by $u_1$ because all these functions integrate to zero. We can in principle identify and add together all the components living in the spectral subspace of the simple eigenvalue $-1/\beta$, and this would generate the constant $C_{2,q}$. Also, we can do the same for the eigenvalue $1/\beta$ and identify $C_{3,q}$, and so on. Finally, because all other eigenvalues are in absolute value less than $1/\beta$ we conclude that the function
\begin{equation*}
    (\mathscr{P}^r q)(\cdot) - u_1(\cdot)\int_0^1 q(u)\d u  -C_{2,q}(-\beta)^{-r} u_2(\cdot) -C_{3,q} \beta^{-r} v_1(\cdot)
\end{equation*}
will decay like $\beta^{-r(1+\varepsilon)}$. \qed  }
\end{enumerate}

\subsection{Proof of Theorem~\ref{thm:eigvaleigfunc}}\label{sec3.6}
(i). Let us first prove \eqref{eq:betaPpsi0equalzero}. Using that $\beta-a_0=b_1$ we have 
\begin{align*}
    \beta (\mathscr{P}\psi_0)(x)=g(x)\chi_{[0,b_1]}(x)-g(x)\chi_{[0,b_1]}(x)=0.
\end{align*} 

{

(ii). Now let us show that $\tilde{K}_p$ is an isometry; note that $K$, $\mathscr{P}$ and $\tilde{K}_p$ commute with the complex conjugation, thus it is enough to work with real functions. Then: 
\begin{align*}
    \int_0^1 \vert\tilde{K}_pf\vert^p(x) \d x &= \int_0^1 u_1(x) \bigg( \frac{\vert f\vert }{{u_1}^{1/p}}\bigg)^p\big( T_\beta(x)\big) \d x = \int_0^1 u_1(x)   K\bigg( \bigg( \frac{\vert f\vert }{{u_1}^{1/p}}\bigg)^p\bigg)(x) \d x \\
    &= \int_0^1 \vert f\vert ^p(x) \d x,
\end{align*}
where in the second equality we have used that $K$ is acting as a change of variables, while in the last equality we used the duality \eqref{dec12}  between $K$ and $\mathscr{P}$ and that $\mathscr{P} u_1=u_1$. 

Now let us prove that $\tilde{K}_p$ is not surjective, by showing that there is no $f\in L^p$ such that $\tilde{K}_pf=\psi_0$ with $\psi_0$ from (i). Assume the contrary; then: 
\begin{equation*}
    0<\int_0^1 u_1(x)^{-1/p}\psi_0^2(x)\d x=\int_0^1 \psi_0(x) \big (K(u_1^{-1/p}f)\big )(x)\d x=0,
\end{equation*}
where in the last equality we again used duality \eqref{dec12}  between $K$ and $\mathscr{P}$ and that $\mathscr{P} \psi_0=0$. We conclude that $\tilde{K}_p$ is a non-surjective isometry on a Banach space, thus  using \cite[Lemma 3.1]{CHM} we get that the spectrum of $\tilde{K}_p$ equals the closed unit disk. Finally, the same holds for $K$ because it is conjugated with $\tilde{K}_p$. 

(iii). If $p=2$ we have $K^*=\mathscr{P}$ and 
\begin{align*}
    \tilde{K}_2^* = \frac{1}{\sqrt{u_1}} \mathscr{P} \sqrt{u_1}, \qquad \tilde{K}_2^*\tilde{K}_2 =\mathrm{Id}.
\end{align*}
The last identity can be written as $u^{-1/2}\mathscr{P} u_1 K u_1^{-1/2} = \mathrm{Id}$ on $L^2$ or $\mathscr{P} u_1 K f=u_1 f$ for any smooth $f$. This last identity can be extended to any $f\in L^p$ with $1\leq p<\infty$. It also holds on $L^\infty([0,1])$ because this space is included in  $L^2([0,1])$. If $\frac{1}{p}+\frac{1}{p'}=1$ then this leads to
\begin{equation}\label{cdh35}
   u_1^{-1/p'}\mathscr{P} u_1^{1/p'} 
   u_1^{1/p} K u_1^{-1/p}=u_1^{-1/p'}\mathscr{P} u_1^{1/p'} \tilde{K}_p= \mathrm{Id} \quad \text{on}\quad  L^{p},\quad 1\leq p\leq \infty. 
\end{equation}

If $\vert z\vert <1$ the operator $z\tilde{K}_p$ has a norm less than $1$ and $(\mathrm{Id}-z\tilde{K}_p)^{-1}$ exists as a Neumann series in $L^p$. Then
\begin{align*}
    \psi_z =\sum_{j=0}^\infty z^j u_1^{1/p'} \tilde{K}_p^j \bigg(\frac{1}{u_1^{1/p'}}\psi_0\bigg) = \psi_0 + \sum_{j=1}^\infty z^j u_1^{1/p'}\tilde{K}_p^j \bigg(\frac{1}{u_1^{1/p'}}\psi_0\bigg).
\end{align*}
Moreover:
\begin{align*}
    \mathscr{P}\psi_z &= \mathscr{P}\psi_0 + z \sum_{j=1}^\infty z^{j-1} u_1^{1/p'} \Big ( u_1^{-1/p'}\mathscr{P} u_1^{1/p'} \Big )\bigg( \tilde{K}_p^{j} \bigg( \frac{1}{u_1^{1/p'}} \psi_0 \bigg) \bigg) \\ 
    &= 0 + z \sum_{j=1}^\infty z^{j-1} u_1^{1/p'} \tilde{K}_p^{j-1} \bigg( \frac{1}{u_1^{1/p'}} \psi_0 \bigg) = z \psi_z,
\end{align*}
where in the second equality we have used \eqref{cdh35}. This shows that every $z \in \mathbb{C}$ with $\vert z \vert  < 1$ is an eigenvalue of $\mathscr{P}$ in any $L^p$. 
}

 \qed

 \subsection{Proof of Theorem~\ref{thm:mero}}\label{sec-mero}

Let us first deal with the spectrum in $|z|<\beta^{-1}$ and show that any such $z$ is an eigenvalue in $BV([0,1])$. Let $\tilde{K}_1$ be the isometry from the previous theorem with $p=1$.  For every $j\geq 1$ and every $f\in BV([0,1])$ we have the identity 
\begin{equation*}
    (\tilde{K}_1^j f)(x)=\frac{u_1(x)}{u_1(T_\beta(x))} (\tilde{K}_1^{j-1} f)(T_\beta(x))
    =\cdots =\frac{u_1(x)}{u_1(T_\beta^j(x))} f(T_\beta^j(x)).
\end{equation*}
Let $\psi_0$ be the function obeying \eqref{eq:betaPpsi0equalzero}. Then: 
\begin{equation*}
    (\tilde{K}_1^j \psi_0)(x)=\frac{u_1(x)}{u_1(T_\beta^j(x))} \Big ( g\big (\beta T_\beta^j(x)\big )\chi_{[0,b_1/\beta]}(T_\beta^j(x))-g\big (\beta T_\beta^j(x)-a_0\big )\chi_{[a_0/\beta,1]}(T_\beta^j(x))\Big ).
\end{equation*}
We now show that the total variation of this function is of order $\beta^j$. Since all the involved functions are in $L^\infty$, it is enough to estimate the total variation of each factor. 

When $x\in [0,1]$, the range of $T_\beta$ between two discontinuities can either be $[0,1)$ or $[0,b_1]$. The range of $T_\beta^2$ can be $[0,1)$, $[0,b_1]$, or $[0,b_2]$. The range of $T_\beta^{n-1}$ can be one of the intervals $[0,1)$ and $[0,b_k]$ with $1\leq k\leq n-1$. No other ``vertical" variations of $T_\beta^j$ are possible, no matter what $j\geq 1$ is. Now let $c=\min\{b_1,\ldots,b_{n-1}\}\in (0,1)$. The map $T^j_\beta$ has a local slope equal to $\beta^j$, thus  the smallest possible interval between two consecutive discontinuities of $T_\beta^j$ has a length of $c/\beta^j$. Hence if $N_j$ is the number of intervals whose endpoints are  consecutive discontinuities of $T_\beta^j$ on $[0,1]$, we must have $N_j c/\beta^j\leq 1$, or $N_j\leq c^{-1}\beta^j$. 

Thus the total variation of each factor grows, up to a constant, as $\beta^j$. Since $|z|<\beta^{-1}$, the series 
$\psi_z =\sum_{j=0}^\infty z^j  \tilde{K}_1^j (\psi_0)$
converges in $BV([0,1])$ and $\W \psi_z=z \psi_z$. Also, we can construct countably many linearly independent such eigenfunctions given the freedom we have in choosing the function $g$.

We now investigate the domain $|z|>\beta^{-1}$ and show that it can only contain discrete spectrum. For that we will need three lemmas:

\begin{lemma}\label{lem1}
    Let $f\in BV([0,1])$ and let $g_1= f(1)\beta^{-1} \chi_{[0,b_1]}$ with $b_1$ as in \eqref{dec1}. Then 
    \begin{equation}\label{X1}
      {\rm Var}_0^1\big (\W f -g_1\big )\leq \beta^{-1} {\rm Var}_0^1(f).   
    \end{equation}
\end{lemma}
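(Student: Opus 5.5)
Write $\W f = \beta^{-1}\sum_{k=0}^{a_0} f_k$ with $f_k(x)=f\big((x+k)/\beta\big)$ for $x\in[0,1]$, where, by the convention $f=0$ outside $[0,1]$, the last term is $f_{a_0}(x)=f\big((x+a_0)/\beta\big)\chi_{[0,b_1]}(x)$; this uses $(x+a_0)/\beta\le 1$ iff $x\le \beta-a_0=b_1$. The plan is to estimate the variation of each branch on $[0,1]$ and to isolate the single jump that the last branch creates at $x=b_1$. That jump has size essentially $|f(1)|$ and is exactly what $g_1$ removes. We may work with a good representative $g$ of $f$ realizing the BV infimum, say one that is right or left continuous at interior points, so that pointwise values such as $f(1)$ make sense.

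First, for $0\le k\le a_0-1$ the map $x\mapsto (x+k)/\beta$ sends $[0,1]$ monotonically onto $[k/\beta,(k+1)/\beta]\subset[0,1]$. Since variation is invariant under monotone reparametrization,
\begin{equation*}
{\rm Var}_0^1(f_k)={\rm Var}_{k/\beta}^{(k+1)/\beta}(f).
\end{equation*}
Second, consider $f_{a_0}-f(1)\chi_{[0,b_1]}=\big(f((x+a_0)/\beta)-f(1)\big)\chi_{[0,b_1]}(x)$. On $[0,b_1]$ its variation equals ${\rm Var}_{a_0/\beta}^{1}(f)$. On $(b_1,1]$ it vanishes identically. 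At $b_1$ its value is $f(1)-f(1)=0$, so the cut-off creates no jump. Hence
\begin{equation*}
{\rm Var}_0^1\big(f_{a_0}-f(1)\chi_{[0,b_1]}\big)\le {\rm Var}_{a_0/\beta}^{1}(f).
\end{equation*}

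Third, subadditivity of variation applied to $\W f-g_1=\beta^{-1}\big(\sum_{k<a_0} f_k+(f_{a_0}-f(1)\chi_{[0,b_1]})\big)$ gives
\begin{equation*}
{\rm Var}_0^1(\W f-g_1)\le \beta^{-1}\sum_{k=0}^{a_0}{\rm Var}_{I_k}(f),
\end{equation*}
where $I_k=[k/\beta,(k+1)/\beta]$ for $k<a_0$ and $I_{a_0}=[a_0/\beta,1]$. These intervals cover $[0,1]$ and overlap only at endpoints. Variation is additive over adjacent closed intervals, so the sum equals ${\rm Var}_0^1(f)$, which proves \eqref{X1}.

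The main obstacle is the bookkeeping of the essential-variation definition in \eqref{beve}. Both $\W f$ and $g_1$ are only defined a.e., and $f(1)$ must be interpreted via the chosen representative. I would handle this by fixing a representative $g$ of $f$ with ${\rm Var}_0^1(g)={\rm Var}_0^1(f)$ that is one-sided continuous at $1$, and then applying the three steps above to this $g$. This yields a representative of $\W f-g_1$ satisfying the bound, and taking the infimum gives the claim. The endpoint bookkeeping (nonoverlapping intervals and additivity) is routine.
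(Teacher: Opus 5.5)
Your proof is correct and uses the same idea as the paper. You split $\W f$ into its $a_0+1$ branches and let $g_1$ cancel the jump of size $f(1)$ that the last branch makes at $b_1$. The paper does this bookkeeping by hand on an arbitrary partition, splitting it at $b_1$ and regrouping the points $(k+x_j)/\beta$ into a partition of $[0,1]$; you package it into subadditivity, reparametrization invariance and interval additivity of the variation.

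Your closing paragraph on representatives is unnecessary, and slightly risky. Changing the value at $1$ changes $g_1$ on all of $[0,b_1]$, so a different representative proves a bound for a different function. Your three steps are purely pointwise and apply verbatim to the given $f$, which is how the paper reads the statement (compare its use of $(\W^r f)(1)$ in Lemma \ref{lemmap2}).
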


The proof of this lemma is in Appendix \ref{sec-lem1}.

\begin{lemma}\label{lemmap2} Let $|\lambda_*|<\beta$  denote the ``second largest" absolute value among the roots of  $p(z)$ from \eqref{dec2'}.
     There exists $C>0$ such that for all $|\lambda_*|<\beta$, all $r\geq 1$ and all $f\in BV([0,1])$ with $\int_0^1 f(x) \d x=0$  we have the following alternatives:
    \begin{equation}\label{jun1}
    \begin{aligned}
        \Vert \mathscr{P}^r f \Vert_{BV}\leq C\,  {\rm Var}_0^1(f)\,\cdot \left\{ \begin{matrix}
         \beta^{-r} \quad \text{if}\quad |\lambda_*|<1,
          \\  r^{n}\, \beta^{-r} \quad  \text{if}\quad  |\lambda_*|=1,\\  r^{n-1}\,
          (|\lambda_*|/\beta)^{r}\quad  \text{if}\quad  1<|\lambda_*|<\beta. 
        \end{matrix}
        \right . 
        \end{aligned}
    \end{equation}
\end{lemma}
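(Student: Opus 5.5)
The plan is to iterate Lemma \ref{lem1}, which splits $\W f$ into a part with contracted variation plus a rank-one piece in ${\rm Range}(\Pi)$, and then to handle the finite-dimensional part with the matrix $\mathcal{P}_n$. Write $\W f = g_1 + R_1 f$ with $g_1=f(1)\beta^{-1}\chi_{[0,b_1]}$ and ${\rm Var}(R_1 f)\le \beta^{-1}{\rm Var}(f)$. Set $f_0=f$ and $f_{k+1}=\W f_k - f_k(1)\beta^{-1}\chi_{[0,b_1]}$; here $f_k(1)$ should be read as a suitable representative value, controlled by $\|f_k\|_{L^\infty}$. Since $\W\Pi=\Pi\W\Pi$ and $\chi_{[0,b_1]}\in{\rm Range}(\Pi)$, unrolling the recursion gives
\begin{equation*}
\W^r f = f_r + \sum_{k=0}^{r-1} f_k(1)\,\beta^{-1}\,\W^{r-1-k}\chi_{[0,b_1]}.
\end{equation*}
By Lemma \ref{lem1} we have ${\rm Var}(f_k)\le\beta^{-k}{\rm Var}(f)$. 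Because $\W$ preserves integrals and the subtracted terms change the integral by known amounts, we track $\int f_k$ separately.

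On ${\rm Range}(\Pi)$, $\W$ acts through $\beta^{-1}\mathcal{P}_n$, whose eigenvalues are $\lambda_j/\beta$ with $\lambda_1=\beta$. Let $e\in{\rm Range}(\Pi)$ be the vector with $\int e=0$ obtained by subtracting the $u_1$ component, i.e.\ project onto the complement of the eigenvalue $1$ using its left eigenvector, which is the integral. For the matrix powers we then have
\begin{equation*}
\|\W^{m} e\|\le C\, m^{n-1}\max(|\lambda_*|/\beta,\ \cdot)^m,
\end{equation*}
where the polynomial factor accounts for Jordan blocks. Rather than the crude splitting above, it is cleaner to write $\W^r f - (\text{projection onto }u_1)$ and use $\int f=0$. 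The integrals $\int f_k$ then satisfy $\int f_{k+1}=\int f_k - f_k(1)\beta^{-1}b_1$, so $\int f_k$ is itself bounded by $\sum_{i<k}|f_i(1)|$.

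To estimate $|f_k(1)|$, note that $\|f_k\|_\infty\le {\rm Var}(f_k)+|\int f_k|$. I expect this to give a recursive system in which $|f_k(1)|\lesssim \beta^{-k}{\rm Var}(f)$ plus a feedback term. The cleaner route is to decompose $f=f^\perp+c\,\chi_{[0,1]}$ differently, replacing the mean-zero constraint by subtracting the Range$(\Pi)$ component. Precisely, define $f_k$ as the remainder after subtracting from $\W f_{k-1}$ a suitable element of ${\rm Range}(\Pi)$ chosen so that $\int f_k=0$ and ${\rm Var}(f_k)\le C\beta^{-1}{\rm Var}(f_{k-1})$; Lemma \ref{lem1} together with subtracting a multiple of $\chi_{[0,1]}$, which has zero variation, achieves this. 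Then $\|f_k\|_{BV}\le C\beta^{-k}{\rm Var}(f)$, and the subtracted element $h_k\in{\rm Range}(\Pi)$ has zero integral and norm $\le C\beta^{-k}{\rm Var}(f)$. It follows that
\begin{equation*}
\W^r f=f_r+\sum_{k} \W^{r-k}h_k,
\end{equation*}
and each $h_k$ lies in the invariant mean-zero subspace of ${\rm Range}(\Pi)$, on which $\W^m$ has norm $\le C m^{n-1}\rho^m$ with $\rho=\max_{j\ge2}|\lambda_j|/\beta$.

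Summing the convolution gives $\sum_k \beta^{-k}(r-k)^{n-1}\rho^{r-k}$. If $|\lambda_*|<1$, then $\rho<\beta^{-1}$ and the sum is $O(\beta^{-r})$. If $|\lambda_*|=1$, each term is at most $r^{n-1}\beta^{-r}$ and there are $r$ of them, giving $r^n\beta^{-r}$. If $|\lambda_*|>1$, the geometric sum is dominated by $k$ small, giving $r^{n-1}(|\lambda_*|/\beta)^r$.

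The main obstacle is making the point evaluation $f(1)$ and the subtraction well defined for BV classes while keeping the subtracted piece in ${\rm Range}(\Pi)$ with norm controlled by ${\rm Var}$ alone, not by $\|f\|_\infty$. This is where the mean-zero condition is used: it gives $\|f_k\|_\infty\le{\rm Var}(f_k)$.
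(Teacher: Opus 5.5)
Your final route is the paper's proof. You subtract at each step the mean-zero element $f_{k-1}(1)\beta^{-1}(\chi_{[0,b_1]}-b_1)\in{\rm Range}(\Pi)$ and use $\int_0^1 f_k\,\mathrm{d}x=0$ to get $|f_k(1)|\le{\rm Var}_0^1(f_k)\le\beta^{-k}{\rm Var}_0^1(f)$. You then sum the convolution against the powers of $\mathscr{P}$ on the mean-zero part of ${\rm Range}(\Pi)$. Your $h_k$ are the paper's $c_{k-1}h$ (with $c_0=f(1)$), and your $f_r$ is its $X_r$.

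One point needs stating precisely: the per-step bound must hold with constant exactly one, ${\rm Var}_0^1(f_k)\le\beta^{-1}{\rm Var}_0^1(f_{k-1})$. It does, since Lemma \ref{lem1} gives $\beta^{-1}$ and subtracting a constant does not change the variation. A constant $C>1$ at each step would compound to $(C/\beta)^k$ and destroy the estimate.
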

This lemma is proved in Appendix \ref{sec-lemmap2}. The last technical result we need is the following:

\begin{lemma}\label{lemmap1}
     There exists $C>0$ such that for all $f\in BV([0,1])$ and for all $r\geq 1$ there exists $h_r\in {\rm Span}\big \{\chi_{[0,b_j]}:\, 0\leq j\leq n-1\big \}$ with the $b_j$'s from \eqref{dec1} such that 
     \begin{equation*}
         {\rm Var}_0^1\big (\mathscr{P}^rf-h_r\big ) \leq C\, \Vert f\Vert_{BV}\, \beta^{-r}.
     \end{equation*}
 \end{lemma}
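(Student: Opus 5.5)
We iterate Lemma \ref{lem1}. Define recursively $f_0=f$ and, for $r\ge1$, $f_r=\W f_{r-1}$. Lemma \ref{lem1} applied to $f_{r-1}$ gives
\begin{equation*}
{\rm Var}_0^1\big(f_r-g^{(r)}\big)\le \beta^{-1}{\rm Var}_0^1(f_{r-1}),\qquad g^{(r)}=f_{r-1}(1)\beta^{-1}\chi_{[0,b_1]}.
\end{equation*}
This alone does not iterate, since ${\rm Var}(f_{r-1})$ itself contains the jump pieces. So we instead track a decomposition $\W^r f=h_r+e_r$, with $h_r\in V:={\rm Span}\{\chi_{[0,b_j]}:0\le j\le n-1\}$ and $e_r$ of small variation, and prove by induction that ${\rm Var}_0^1(e_r)\le \beta^{-r}{\rm Var}_0^1(f)$ plus a controlled correction.

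The key structural fact is that $\W$ maps $V$ into $V$, i.e. $\W\Pi=\Pi\W\Pi$ as in Theorem \ref{thm:mero}, so we may take this as given. Indeed, $\W\chi_{[0,b_j]}=\beta^{-1}\big(a'\chi_{[0,1]}+\chi_{[0,\beta b_j-a']}\big)$ with $\beta b_j - \lfloor\beta b_j\rfloor=b_{j+1}$ and $b_n=0$. So set $h_0=0$, $e_0=f$, and define
\begin{equation*}
h_r=\W h_{r-1}+e_{r-1}(1)\beta^{-1}\chi_{[0,b_1]}\in V,\qquad e_r=\W e_{r-1}-e_{r-1}(1)\beta^{-1}\chi_{[0,b_1]}.
\end{equation*}
Then $\W^rf=h_r+e_r$ by linearity, and Lemma \ref{lem1} applied to $e_{r-1}\in BV$ gives ${\rm Var}(e_r)\le\beta^{-1}{\rm Var}(e_{r-1})$. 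Hence ${\rm Var}_0^1(e_r)\le\beta^{-r}{\rm Var}_0^1(f)\le\beta^{-r}\Vert f\Vert_{BV}$, which is the claim with $C=1$ (or a constant absorbing the choice of representative).

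The one point needing care is that Lemma \ref{lem1} is stated with the value $f(1)$ of a BV function, which is only defined after choosing a representative; the $BV$ norm is an infimum over representatives. We therefore fix at each step a representative, e.g. the right-continuous one with $f(1)$ defined as the left limit, that realizes the essential variation. We also check that $\W$ applied to such a representative is computed pointwise by \eqref{eq:mathcalP}, so that the inequality \eqref{X1} holds for the chosen representatives. This bookkeeping is the main, though minor, obstacle; the algebra itself is just the invariance of $V$ and linearity.
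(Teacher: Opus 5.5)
Your proof is correct and is essentially the paper's argument: iterate Lemma \ref{lem1}, and at each step move the jump at $b_1$ into the $\W$-invariant span of the $\chi_{[0,b_j]}$'s. That invariance is computed directly in Appendix \ref{sec:invsubspace}, independently of Theorem \ref{thm:mero}, so citing it is not circular.

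The only difference is that the paper first subtracts $\int_0^1 f$ so it can reuse the mean-zero construction from Lemma \ref{lemmap2}. This forces it to add the mean back at the end. It writes this as $h_r=g_r+\chi_{[0,1]}\int_0^1 f$, which should read $g_r+(\int_0^1 f)\,\W^r\chi_{[0,1]}$. Your direct recursion avoids this step altogether.
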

 The proof of this lemma is given in Appendix \ref{sec-lemmap1}. Now we are ready to prove the theorem. Let us fix $\epsilon>0$ and choose $z$ with $|z|\geq \beta^{-1}+\epsilon$ which is not in the spectrum of $\beta^{-1} \mathcal{P}_n$ where $\mathcal{P}_n$ is as in \eqref{Pn}, or equivalently, $\beta z$ is not a root of the Parry polynomial; these roots are analyzed in Lemma \ref{lemmahc1}. We first want to show that there exists $r_0>1$, independent of $z$,  such that $z^{r_0}$ belongs to the resolvent set of $\W^{r_0}$, and after that we show that $z$ itself is in the resolvent set of $\W$. 

 We remind that $\Pi$ is the finite rank orthogonal projection in $L^2([0,1])$ corresponding to the subspace generated by the functions $\chi_{[0,b_j]}$ with $0\leq j\leq n-1$, see \eqref{proj-pi}. Because these functions are all characteristic functions (thus in $BV([0,1])$), $\Pi$ remains a projection even when considered on $BV([0,1])$.  
We know from Appendix \ref{sec:invsubspace} that $\W$ leaves the range of $\Pi$ invariant, hence we may write (with a slight abuse of language) 
\begin{equation}\label{hol}
\W^r-z^r=\begin{bmatrix} (\beta^{-1}\mathcal{P}_n)^r-z^r   &  \Pi \W^r (1-\Pi)\\
0 & (1-\Pi)\W^r (1-\Pi) -z^r(1-\Pi)
\end{bmatrix},\quad r\geq 1.
\end{equation}

On the range of $1-\Pi$ we may assume that the functions integrate to zero because the constant function $\chi_{[0,1]}$ is in the range of $\Pi$. Since $\W$ preserves the value of the integral (see Lemma \ref{lem:ks11}(i)), all the functions in the range of $\W^r (1-\Pi)$ will also integrate to zero. If $\int_0^1 f(x)\d x=0$ then ${\rm Var}_0^1(f)\leq \Vert f\Vert_{BV}\leq 2 {\rm Var}_0^1(f)$, hence Lemma \ref{lemmap1} implies that 
\begin{equation}\label{fejl3}
    \Vert (1-\Pi)\W^{r}(1-\Pi)\Vert_{BV}\leq C \beta^{-r},\quad \forall r\geq 1.
    \end{equation}
    Now there exists some $r_0>1$ such that  
    \begin{equation*}
        \vert z^{r_0} \vert =\beta^{-{r_0}}(1+|\beta z|-1)^{r_0}\geq \beta^{-r_0}(1+\beta \epsilon)^{r_0}> 2C\beta^{-r_0}
    \end{equation*}
    with the same $C$ as in \eqref{fejl3} and  
    \begin{equation*}
        A(z):=(1-\Pi)\W^{r_0} (1-\Pi) -z^{r_0}(1-\Pi)
    \end{equation*}
    is invertible with  
    \begin{equation}\label{sep8}
    A(z)^{-1}=-\sum_{k\geq 0}z^{-r_0(k+1)}\Big ((1-\Pi)\W^{r_0} (1-\Pi)\Big )^k
    \end{equation}
    in the Banach space $(1-\Pi)BV([0,1])$. This implies that 
\begin{equation}\label{hol'}
(\W^{r_0}-z^{r_0})^{-1}=\begin{bmatrix} \big ((\beta^{-1}\mathcal{P}_n)^{r_0}-z^{r_0} \big )^{-1} &  -\big ((\beta^{-1}\mathcal{P}_n)^{r_0}-z^{r_0}\big )^{-1}\Pi \W^{r_0} (1-\Pi)A(z)^{-1}\\
0 & A(z)^{-1}
\end{bmatrix}.
\end{equation}

The polynomial $\xi^{r_0}-z^{r_{0}}$ can be factorized as $(\xi-z) \sum_{k=0}^{r_0-1}\xi^k z^{r_0-1-k}$ which leads to 
\begin{equation}\label{hol4}
\begin{aligned}
(\W-z)^{-1}&=\Big (\sum_{k=0}^{r_0-1}z^{r_0-1-k}\W^k\Big ) \, (\W^{r_0}-z^{r_0})^{-1}\\
&=\begin{bmatrix} \big ((\beta^{-1}\mathcal{P}_n)-z \big )^{-1} &  -\big ((\beta^{-1}\mathcal{P}_n)-z\big )^{-1}\Pi \W^{r_0} (1-\Pi)A(z)^{-1}\\
0 & (1-\Pi)\sum_{k=0}^{r_0-1}z^{r_0-1-k}\W^k(1-\Pi)A(z)^{-1}
\end{bmatrix}
\end{aligned}
\end{equation}
which shows that $z$ is in the resolvent set of $\W$ when $\W$ is restricted to $BV([0,1])$. This proves that the only singularities of $(\W-z)^{-1}$ on $|z|> \beta^{-1}$ are of the type $\lambda/\beta$ where $\lambda$ is a root of the Parry polynomial which obeys $1<\vert \lambda\vert \leq \beta$.

\qed

\subsection{Proof of Corollary \ref{coroll1}}\label{proof-coro}
The key starting point is formula \eqref{hol4}. Choose $\epsilon>0$ small enough such that all eigenvalues of $\beta^{-1}\mathcal{P}_n$ obeying $1\geq |\zeta_j|>\beta^{-1}$ belong to the domain $|z|>\beta^{-1}+2\epsilon$. Then the operator $A(z)^{-1}$ from \eqref{sep8} is analytic on $|z|>\beta^{-1}+\epsilon$ as a convergent Neumann series, for some large enough $r_0$ depending on $\epsilon$. Denoting by $q_j$ the Riesz projections of $\beta^{-1}\mathcal{P}_n$ corresponding to the eigenvalues $1\geq |\zeta_j|>\beta^{-1}$, from \eqref{hol4} we obtain: 
\begin{equation}\label{chiuj}
Q_j=\Pi q_j \Pi -\Pi q_j\Pi \W^{r_0} A(\zeta_j)^{-1}(1-\Pi).
\end{equation}
The norm estimate in \eqref{finlacom} which needs an $0<\alpha<1$ is a consequence of the existence of the essential spectrum, and the fact that we use the Gelfand formula relating $\lim_{r\to\infty}\Vert (\W (1-Q))^r\Vert^{1/r}$ with the  spectral radius of $\W (1-Q)$ which is equal to $\beta^{-1}$. This proves \eqref{finlacom}. 

Now let us prove \eqref{juli10}. We must have $Q_1f= u_1 C(f)$ with some constant $C(f)$. From \eqref{finlacom} we have that $\W^rf -u_1 C(f)$ tends to zero when $r\to\infty$, and since $\W$ preserves the integral of $f$ we must have that $C(f)=\int_0^1 f(x)\d x$. 

Point (iii) is just a reformulation of \eqref{finlacom} in the case of simple eigenvalues, and we can identify each $\hat{U}_j$ using \eqref{chiuj}. 

Finally, point (iv) gives a sharper norm estimate of the error without the extra $\alpha$, because in the Parry-Pisot case we do not employ the Gelfand formula. Here we use the first alternative in \eqref{jun1} where we choose the function as $f-u_1 \int_0^1 f(x)\d x$.  \qed

\begin{remark}
    For completeness, let us give an example of a Parry polynomial of degree $4$ with roots outside the closed unit disk. 
Let $L\geq 3$ be an integer and let $p(z)=z^4-\big ((L+1)z^3+Lz^2 +L\big )$ for which $n=4$, $a_0=L+1$, $a_2=0$, $a_1=a_3=L$. Since $a_0$ is strictly larger than all other coefficients, they form a Parry sequence (see \eqref{dec5}). Let 
\begin{equation*}
    f(z)=-\frac{p(z)}{z^3}=L+1+\frac{L}{z} +\frac{L}{z^3} -z,\quad z\neq 0.
\end{equation*}
The zeros of $f$ and $p$ coincide. 
If $x\in \R\setminus\{0\}$ then $f'(x)=-L/x^2 -3L/x^4-1<0$, with $\lim_{x\to-\infty}f(x)=+\infty$  and $\lim_{x\nearrow 0}f(x)=-\infty$. 

We have $f(-1)=-L+2<0$, thus there must exist a unique negative zero $z_2<-1$ of $f$ (hence $p$). From \cite{So} we also know that $|z_2|<\varphi=\frac{\sqrt{5}+1}{2}$. 
\end{remark}

\subsection{Proof of Theorem \ref{thm:left}}\label{proof-left}

{\it Proof of (i)}. 
When $\zeta=1$ then $\hat{U}_m(F)$ is a Riemann sum which converges to the Riemann integral of $F$; note that any { real} function of bounded variation can be written as the difference between two bounded and increasing functions, hence it is Riemann integrable. Thus we may assume that $1/\beta<|\zeta|<1$ or $1<|\lambda|<\beta$. Let us first prove that the limit exists.  Let us focus on the sum over the last indices $j_m$ and $k_m$. We have $k_m\in \{1,2,...,n\}$ and $j_m\in \{0,...,a_{k_m-1}-1\}$. Then we need to control

\begin{equation*}
    \sum_{t_{{\bf k}_{m-1}}^{{\bf j}_{m-1}}} \lambda^{-k_1-k_2-\cdots -k_{m-1}} \sum_{k_m=1}^n \sum_{j_m=0}^{a_{k_m-1}-1} \lambda^{-k_m} f\big (t_{\bk_{m-1}}^{\bj_{m-1}}+\beta^{-|\bk_{m-1}|} t_{k_m}^{j_m}\big ).
\end{equation*}

Here \eqref{aug1} plays a crucial role: the above sum over $k_m$ and $j_m$ can be rewritten as
\begin{equation*}
    f\big (t_{\bk_{m-1}}^{\bj_{m-1}}\big ) +\sum_{k_m=1}^n \sum_{j_m=0}^{a_{k_m-1}-1} \lambda^{-k_m} \Big (f\big (t_{\bk_{m-1}}^{\bj_{m-1}}+\beta^{-|\bk_{m-1}|} t_{k_m}^{j_m}\big ) -f\big (t_{\bk_{m-1}}^{\bj_{m-1}}\big )\Big ).
\end{equation*}
The key estimate is as follows:
\begin{align*}
& \sum_{k_m} \sum_{j_m}|\lambda|^{-k_m}\Big \vert f(t_{\bk_{m-1}}^{\bj_{m-1}}+\beta^{-|\bk_{m-1}|}t_{k_m}^{j_m})-f(t_{\bk_{m-1}}^{\bj_{m-1}})\Big \vert \leq \sum_{k=1}^n\frac{a_{k-1}}{|\lambda|^k}\, \, {\rm Var}^{t_{\bk_{m-1}}^{{\bj_{m-1}}}+\beta^{-|\bk_{m-1}|}}_{t_{\bk_{m-1}}^{\bj_{m-1}}}(f).
\end{align*}
Then when we sum over the partition with $m-1$ layers we have 
\begin{equation*}
    \vert\hat{U}_m(f)-\hat{U}_{m-1}(f)\vert \leq \vert\lambda \vert^{-m+1}\, \sum_{k=1}^n\frac{a_{k-1}}{\vert \lambda \vert^k}\,  {\rm Var}_0^1(f)
\end{equation*}
which defines a convergent series because $|\lambda|>1$. Thus $\hat{U}(f)$ exists. 

Now let us prove \eqref{aug33}. We will show that for all $m\geq n+1$ we have:
\begin{equation}\label{aug34}
    \hat{U}_m(\W f)=\zeta \, \hat{U}_{m-n}(f) +\mathcal{O}(|\lambda|^{-m}).
\end{equation}
From the definition of $\W$ we have 
\begin{equation*}
    \hat{U}_m(\W f)=\beta^{-1}\sum_{s=0}^{a_0-1} \sum_{t_{{\bf k}_m}^{{\bf j}_m}}  \lambda^{-|\bk_m|}f\big (s/\beta +\beta^{-1}t_{{\bf k}_m}^{{\bf j}_m}\big )  +  \beta^{-1}\sum_{t_{{\bf k}_m}^{{\bf j}_m}}  \lambda^{-|\bk_m|}f\big (a_0/\beta +\beta^{-1}t_{{\bf k}_m}^{{\bf j}_m}\big ).
\end{equation*}

The first term contains 
\begin{equation*}
    s/\beta +\beta^{-1}t_{k_1 \cdots k_m}^{j_1 \cdots j_m} =t_{1 k_1 \cdots k_m}^{s j_1\cdots j_m}
\end{equation*}
which will eventually run over all partition elements with ``$m+1$" layers where the value of the first ``$k$" equals $1$ and the first ``$j$" runs from $0$ to $a_0-1$.

The second term contains 
\begin{equation*}
    a_0/\beta +\beta^{-1}t_{{\bf k}_m}^{{\bf j}_m}
\end{equation*}
and we will show that summing over $t_{{\bf k}_m}^{{\bf j}_m}$ in the above formula is the same as summing over all points with $k_1\geq 2$. 

In order to make the main idea of the proof more transparent, we first consider a ``simpler case": 

\subsubsection*{Case 1: $a_0>a_j$ for all $j\geq 1$.} 

We have 
\begin{equation*}
    b_1=a_1/\beta +a_2/\beta^2+\cdots +a_{n-1}/\beta^{n-1}=t_{1\times (n-1)}^{a_1\cdots a_{n-1}}
\end{equation*}
where the notation $1\times(n-1)$ means that $k_1=\cdots=k_{n-1}=1$. Since $f(a_0/\beta+\beta^{-1} t)=0$ if $t>b_1$, it is enough to consider points where $t_{\bk_m}^{\bj_m}<b_1$. This inequality holds if: 
\begin{align*}
&    k_1=1, \quad j_1<a_1\\
 &   k_1=1,\quad j_1=a_1; \quad k_2=1, \quad j_2<a_2 \\ 
  &  k_1=1,\quad j_1=a_1; \quad k_2=1, \quad j_2=a_2; \quad k_3=1,\quad j_3<a_3\\
   & \hspace{0.6cm}\vdots
\end{align*}
In the first case where $k_1=1$ and $j_1<a_1$ we have 
\begin{equation*}
    a_0/\beta +\beta^{-1} (t_1^{j_1} +\beta^{-1}t_{k_2\cdots k_m}^{j_2\cdots j_m})=t_2^{j_1}+\beta^{-2} t_{k_2\cdots k_m}^{j_2\cdots j_m}=t_{2k_2\cdots k_m}^{j_1j_2\cdots j_m}.
\end{equation*}

In the second case where $k_1=k_2=1$, $j_1=a_1<a_0$ and $j_2<a_2$: 
\begin{equation*}
    a_0/\beta +\beta^{-1} (a_1/\beta +\beta^{-1}t_{1k_3\cdots k_m}^{j_2\cdots j_m})=a_0/\beta +a_1/\beta^2 + j_2/\beta^3+\beta^{-3}t_{k_3\cdots k_m}^{j_2\cdots j_m}=t_{3k_2\cdots k_m}^{j_2\cdots j_m}.
\end{equation*}

Now we show that ``no points are missing". Let $p>2$. Then

\begin{equation*}
    t_{2k_2k_3\cdots k_p}^{j_1j_2\cdots j_p}=a_0/\beta +j_1/\beta^2 +\beta^{-2}t_{k_2\cdots k_p}^{j_2\cdots j_p} =a_0/\beta +\beta^{-1} t_{1k_2\cdots k_p}^{j_1j_2\cdots j_p}
\end{equation*}
and if $i\geq 3$ then 
\begin{equation}\label{aug35}
\begin{aligned}
    t_{ik_2k_3\cdots k_{p}}^{j_1j_2\cdots j_p}&=a_0/\beta +a_1/\beta^2+\cdots +a_{i-2}/\beta^{i-1}+j_1/\beta^{-i}+\beta^{-i}t_{k_2\cdots k_p}^{j_2\cdots j_p}\\
    &=a_0/\beta +
    \beta^{-1} t_{1\times (i-1)k_2\cdots k_p}^{a_1\cdots a_{i-2} j_1\cdots j_p},
    \end{aligned}
\end{equation}
where the notation  $1\times (i-1)$ on the right hand side means that the first $i-1$ ``$k$"'s are equal to $1$. 
From \eqref{aug35} we see that given $m$ and $i$, we may replace $a_0/\beta +\beta^{-1} t_{1\times (i-1),k_2\cdots k_m}^{a_1\cdots a_{i-2} j_1\cdots j_m}$ with $t_{ik_2k_3\cdots k_{m-i+2}}^{j_1j_2\cdots j_{m-i+2}}$. Also, we see that the sum of $k$'s on the left hand side of \eqref{aug35} is exactly one unit larger than the sum of $k$'s on the right hand side. This implies that if we want $\hat{U}_m(\W f)$ to resemble $\hat{U}_m(f)$, we need to multiply it with $\beta/\lambda$. 

Thus 
\begin{align}\label{aug355}
& (\beta/\lambda)\hat{U}_m(\W f)=\sum_{t_{1k_1\cdots k_m}^{sj_1\cdots j_{m}}} \lambda^{-1-|\bk_m|} f(t_{1k_1\cdots k_m}^{sj_1\cdots j_{m}})+\sum_{t_{2k_2\cdots k_m}^{j_1\cdots j_{m}}} \lambda^{-2-k_2-\cdots -k_m} f(t_{2k_2\cdots k_m}^{j_1\cdots j_{m}})\\
&+\sum_{t_{3k_2\cdots k_{m-1}}^{j_1\cdots j_{m-1}}} \lambda^{-3-k_2-\cdots -k_{m-1}} f(t_{3k_2\cdots k_{m-1}}^{j_1\cdots j_{m-1}})+\cdots +\sum_{t_{nk_2\cdots k_{m-n+2}}^{j_1\cdots j_{m-n+2}}} \lambda^{-n-k_2-\cdots -k_{m-n+2}} f(t_{nk_2\cdots k_{m-n+2}}^{j_1\cdots j_{m-n+2}}).\nonumber 
\end{align}
Each sum on the right hand side is performed over a different number of layers varying from $m-n+1$ to $m+1$. Since $f$ has bounded variation, we may discard a finite number of layers with a price of an error $\mathcal{O}(|\lambda|^{-m})$ and this proves \eqref{aug34}. Then take $m\to\infty$ and obtain \eqref{aug33} when $a_0>a_j$ for all $1\leq j\leq n-1$. 

\subsubsection*{Case 2: general Parry coefficients.}

 In Appendix \ref{sec-relab} we will prove the following result: 
\begin{lemma}\label{lemma-relabeling} Any number $t_{k_1k_2\cdots k_m}^{j_1j_2\cdots j_m}$ with $k_1\geq 2$ can be written as 
\begin{equation*}
    a_0/\beta +\beta^{-1}\tilde{t}\quad \text{with}\quad \tilde{t}= t_{\tilde{k}_1\cdots \tilde{k}_p}^{\tilde{j}_1\cdots \tilde{j}_p},
\end{equation*}
where 
\begin{equation*}
    k_1+k_2+\cdots +k_m=1 +\tilde{k}_1+\cdots +\tilde{k}_p,\quad m\leq p\leq n-1+m.
\end{equation*}

The other way around, let $t_{{k}_1\cdots {k}_m}^{{j}_1\cdots {j}_m}<b_1=a_1/\beta+\cdots +a_{n-1}/\beta^{n-1}$. Then there exists $p\geq 1$ and $t_{\tilde{\bk}_p}^{\tilde{\bj}_p}<1$ with $\tilde{k}_1\geq 2$ such that 
\begin{equation*}
    t_{\tilde{\bk}_p}^{\tilde{\bj}_p}=a_0/\beta +\beta^{-1} t_{\bk_m}^{\bj_m}.
\end{equation*}
\end{lemma}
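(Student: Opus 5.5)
The plan is to work directly with the explicit form of the partition points. Write $\tau_0:=0$ and $\tau_k:=a_0/\beta+\cdots+a_{k-1}/\beta^k$ for $1\le k\le n$, so that $\tau_n=1$. Every first-layer point has the form $t_{k}^{j}=\tau_{k-1}+j\beta^{-k}$ with $0\le j\le a_{k-1}-1$. Iterating the definition gives
\begin{equation*}
t_{\bk_m}^{\bj_m}=\sum_{i=1}^m \beta^{-(k_1+\cdots+k_{i-1})}\big(\tau_{k_i-1}+j_i\beta^{-k_i}\big).
\end{equation*}
Thus $t_{\bk_m}^{\bj_m}$ is a finite $\beta$-expansion. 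Its digit string is the concatenation of blocks $a_0a_1\cdots a_{k_i-2}\,j_i$, of lengths $k_i$, with $j_i<a_{k_i-1}$. The total length of the string is $|\bk_m|$.

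\textbf{First direction.} Take $k_1\ge 2$. Then the first block is $a_0a_1\cdots a_{k_1-2}j_1$, and stripping its leading digit $a_0$ gives $t_{\bk_m}^{\bj_m}=a_0/\beta+\beta^{-1}\tilde t$. Here $\tilde t$ has digit string $a_1\cdots a_{k_1-2}\,j_1\,(\text{blocks } 2,\dots,m)$. The task is to re-parse the prefix $a_1a_2\cdots a_{k_1-2}j_1$ into admissible blocks of the form $a_0\cdots a_{k-2}\,j$ with $j<a_{k-1}$. A single digit $d<a_0$ is itself the block with $k=1$.

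I would do the re-parsing greedily. Read $a_1,a_2,\dots$ and compare with $a_0,a_1,\dots$. By the Parry condition \eqref{Parrycon}, the shifted sequence $a_1a_2\cdots$ is lexicographically smaller than $a_0a_1\cdots$. So there is a first index at which $a_{1+i}<a_i$. Up to that index the digits agree with a prefix of $a_0a_1\cdots$, and they are cut off as one admissible block. One then restarts the comparison at the next digit, i.e. applies \eqref{Parrycon} to a further shift $a_{j}a_{j+1}\cdots$.

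The final digit $j_1<a_{k_1-1}$ needs separate care. Just before it, the current comparison window agrees with some $a_0\cdots a_{r-1}$, and we need either $j_1<a_r$ or a further split. The required inequality comes from \eqref{Parrycon} applied to the relevant shift: it gives $a_{k_1-1}\le a_r$, and hence $j_1<a_r$.

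Each re-parsed block consumes at least one digit, so the number of new blocks in the prefix is between $1$ and $k_1-1$. Hence $m\le p\le m+k_1-2\le m+n-2$, which lies within the claimed range. Lengths are preserved, since total length minus one gives $|\tilde\bk_p|=|\bk_m|-1$.

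\textbf{Converse.} Given $t:=t_{\bk_m}^{\bj_m}<b_1$, form the digit string $a_0\,(\text{digits of } t)$. It must be parsed into admissible blocks with $\tilde k_1\ge2$, meaning the first block is $a_0a_1\cdots a_{\tilde k_1-2}\,\tilde j_1$. The condition $t<b_1=(.a_1a_2\cdots a_{n-1})_\beta$ says, lexicographically, that the digit string of $t$ is strictly below $a_1a_2\cdots a_{n-1}$. Both strings are greedy expansions, and for admissible digit strings lexicographic order is the same as numerical order. So there is a first position $r\le n-1$ with $d_r<a_r$ and $d_i=a_i$ for $i<r$. We then take the first block to be $a_0a_1\cdots a_{r-1}d_r$, so $\tilde k_1=r+1\ge2$. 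The remaining digits are parsed exactly as in the first direction. Finally, $t_{\tilde\bk_p}^{\tilde\bj_p}=a_0/\beta+\beta^{-1}t<a_0/\beta+b_1/\beta=1$ by \eqref{dec1}.

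\textbf{Main obstacle.} The main difficulty is justifying that the greedy re-parsing always succeeds, i.e. that no cut-off occurs at a forbidden digit. In particular, the digit strings involved must be admissible greedy $\beta$-expansions, so that the lexicographic and numerical orders coincide. I would prove this by an induction on string length that uses \eqref{Parrycon} at each shift. In that induction I would use Lemma \ref{lemmaParry} to rule out ties: a tie would force $b_j=b_k$ for some $j\ne k$, which Lemma \ref{lemmaParry} excludes.
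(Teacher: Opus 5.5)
Your argument is correct and is essentially the paper's. Both write $t_{\bk_m}^{\bj_m}$ as a concatenation of digit blocks $a_0\cdots a_{k-2}\,j$ with $j<a_{k-1}$, and use \eqref{Parrycon} at each shift to regroup $a_1\cdots a_{k_1-2}\,j_1$ (respectively, to compare the digits of $t$ with $a_1\cdots a_{n-1}$); your greedy cut at the first lexicographic deficit is a more systematic version of the paper's ``best saturation'' induction.

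One simplification for the converse: you only need the easy half of monotonicity, so the admissibility induction you list as the main obstacle is not needed. Pad the digits of $t$ with zeros; this also covers the case where they form a proper prefix of $a_1a_2\cdots a_{n-1}$, in which case the new first block ends in a padded $0$. Then a first differing digit $d_r>a_r$ already forces $t>b_1$, because $\sum_{i>r}a_i\beta^{-i}<\beta^{-r}$ by \eqref{dec1}.
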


This technical result replaces the identity \eqref{aug35}, which was the crucial ingredient in proving \eqref{aug355}. This time we cannot exactly say how many layers we loose or gain by replacing the summation over the indices $\tilde{k}$ and $\tilde{j}$ with indices where the first ``$k$" equals $2,3,...,n$. But \eqref{aug34} still holds, and this finishes the proof of (i).  

{\it Proof of (ii)}.   Let us introduce a modified version of $\hat{U}_m(F)$ given by 
 \begin{equation*}
     \widetilde{U}_m(f )=\sum_{t_{\bk_m}^{\bj_m}}\lambda^{-|\bk_m|} f\big (t_{\bk_m}^{\bj_m}+\beta^{-|\bk_m|}\big ).
 \end{equation*}
 
It turns out that one can explicitly compute $\widetilde{U}_m(f)$ when $f=\chi_{[0,y]}$ with $0<y<1$. Let us do that. 
 There exists a unique sequence $t_{u_1u_2\cdots u_p}^{w_1w_2\cdots w_p}$ such that 
\begin{equation*}
    t_{u_1u_2\cdots u_p}^{w_1w_2\cdots w_p}\leq \,  y=\sum_{r\geq 1} \frac{y_{r-1}}{\beta^r}\, <  t_{u_1u_2\cdots u_p}^{w_1w_2\cdots w_p}+\beta^{-u_1-\cdots -u_p},\quad p\geq 1.
\end{equation*}
For example 
\begin{equation*}
    t_{u_1}^{w_1}=a_0/\beta+\cdots +a_{u_1-2}/\beta^{u_1-1} +w_1/\beta^{u_1},\quad w_1=y_{u_1-1},
\end{equation*}
where by convention, if $u_1=1$, then only the term $y_0/\beta$ appears.  If $u_1>1$ then $y_0=a_0$, $y_1=a_1$, ..., $y_{u_1-2}=a_{u_1-2}$. 

For $m=1$ we have 
\begin{equation*}
    \widetilde{U}_1\big (\chi_{[0,y]}\big )=\sum_{k_1=1}^n \lambda^{-k_1}\sum_{j_1=0}^{a_{k_1-1}-1} \chi_{[0,y]}(t_{k_1}^{j_1}+\beta^{-k_1})=\sum_{r=1}^{u_1}y_{r-1}/\beta^r
\end{equation*}
where we see that when $k_1=u_1$, only the values $j_1=0,1,\ldots ,w_1-1$ will produce a value $1$, hence by summing them we obtain $w_1=y_{u_1-1}$. 

For $m=2$ we have 
\begin{align*}
   \widetilde{U}_2\big (\chi_{[0,y]}\big ) =\sum_{k_2=1}^n \sum_{j_2=0}^{a_{k_2-1}-1} \lambda^{-k_2}\sum_{k_1=1}^n\lambda^{-k_1}\sum_{j_1=0}^{a_{k_1-1}-1} \chi_{[0,y]}\big (t_{k_1}^{j_1}+\beta^{-k_1}t_{k_2}^{j_2} +\beta^{-k_1-k_2}\big ).
\end{align*}
If $k_1<u_1$ or if $k_1=u_1$ and $j_1<w_1$ then the value of the function is $1$, regardless of $k_2$ and $j_2$. This part reproduces $\tilde{U}_1$. If $k_1=u_1$ and $j_1=w_1$ then the summation over $k_2$ and $j_2$ is no longer trivial and we have the following extra terms: 
\begin{align*}
    \sum_{k_2=1}^n \sum_{j_2=0}^{a_{k_2-1}-1} \lambda^{-u_1-k_2} \chi_{[0,y]}\big (t_{u_1}^{w_1}+\beta^{-u_1}t_{k_2}^{j_2} +\beta^{-u_1-k_2}\big )=\sum_{r=u_1+1}^{u_1+u_2}\frac{y_{r-1}}{\lambda^r}.
\end{align*}
In the same way we may prove 
\begin{equation*}
    \widetilde{U}_m(\chi_{[0,y]})=\sum_{r=1}^{u_1+\cdots+u_m}\frac{y_{r-1}}{\lambda^r}\quad m\geq 1.
\end{equation*}
The final step is to take $m\to\infty$. 

In order to prove \eqref{aug31} we use
\begin{align*}
h(1) - h(t) &= \int_t^1 h'(y)dy=\int _0^1 \chi_{[0,y]}(t) h'(y)dy
\end{align*}
and apply the formula for $\hat{U}(\chi_{[0,y]})$. 

Now let us prove \eqref{sep11}. 
Using \eqref{marvel} we have 
\begin{align*}
   \hat{U}(u)=\sum_{k=0}^{n-1}\lambda^{-k-1}\hat{U}(\chi_{[0,b_k]})= \sum_{k=0}^{n-1} \sum_{j\geq 1} \lambda^{-k-j -1} a_{k+j-1}=\sum_{p\geq 1} p \, \frac{a_{p-1}}{\lambda^{p+1}}=h_n'(\lambda).
\end{align*}
\qed 

{\it Proof of (iii)}. Denote by $f(y)=\hat{U}(\chi_{[0,y]})$. 
Assume first that 
\begin{equation*}
    y=\sum_{m\geq 1}\frac{y_{k_m}}{\beta^{k_m}},\quad y_{k_m}>0.
\end{equation*}

Let $Y_M=\frac{y_{k_M}-1}{\beta^{k_M}} +\sum_{m\neq M} \frac{y_{k_m}}{\beta^{k_m}}$. Then $f(y)-f(Y_M)=\lambda^{-k_M}$ and $y-Y_M=\beta^{-k_M}$ for all $M$ and $f'(y)$ does not exist. 

Now assume that $y=\sum_{k=1}^K \frac{y_k}{\beta^k}$ with $y_K>0$. For every $m>K+2$ we define $Y_m=y+\beta^{-m}<1$. Then $f(Y_m)-f(y)=\lambda^{-m}$ and $Y_m-y=\beta^{-m}$ for all $m$ and $f'(y)$ does not exist.

{\it Proof of (iv)}.  The proof is identical with the one in  \cite[Theorem 4.5]{Su}. Assume that such a $\mu$ exists. Then the map from (iii)  would equal $\mu([0,y])$. This is a function of bounded variation, thus differentiable almost everywhere, which is in contradiction with (iii). 

{\it Proof of (v)}. If $q$ is a polynomial of degree $M$ then by choosing $m=1$ and $N=M+1$ in formula \eqref{cdh20} we see that the error term is identically zero and for $r\geq n$ we have 
\begin{align}\label{cdh290}
   & \big(\mathscr{P}^r q\big)(y) = \sum_{k_1=1}^n \sum_{j_1=0}^{a_{k_1-1}-1}\bigg\{  \big(\mathscr{P}^{r-k_1}H_1\big)(y) \int_{t_{k_1}^{j_1}}^{t_{k_1}^{j_1}+\beta^{-k_1}} q(u) \d u  \\\nonumber 
        &+ \sum_{s=1}^{M+1} \beta^{-k_1 s}\Big( q^{(s-1)}\big(t_{k_1}^{j_1}+\beta^{-k_1}\big)-q^{(s-1)}\big(t_{k_1}^{j_1}\big)\Big)  \frac{1}{s!}\Vert B_s \Vert_{L^1([0,1])} \big(\mathscr{P}^{r-k_1} H_{ns+1}\big)(y)\bigg \}.
\end{align}
Note that the term with $s=M+1$ is zero because $q^{(M)}$ is constant. 

Let $1/\beta<|\zeta|<1$ be a simple discrete eigenvalue. It will also be an eigenvalue for the matrix $\widetilde{\mathcal{P}}_{\nu,n}$ from \eqref{cdh30}, see also the invariance property in \eqref{cdh26}. Its Riesz projection $\pi_\zeta$ will have rank one and
\begin{equation*}
    \pi_\zeta (H_{ns+1})=C_s\, u_\zeta,\quad 0\leq s\leq M,
\end{equation*}
where $u_\zeta$ is the eigenvector from \eqref{sep10} with $c_1=1/\lambda$ for which we know the value of $\hat{U}(u_\zeta)$ from \eqref{sep11}. We also have $C_0=1/\hat{U}(u_\zeta)$ because $\pi_\zeta(\chi_{[0,1]})=u_\zeta \frac{1}{\hat{U}(u_\zeta)} \hat{U}(\chi_{[0,1]})$ and $\hat{U}(\chi_{[0,1]})=1$. 

 $\hat{U}$ applied on any other (generalized) eigenvector of $\W$ (in $BV([0,1])$) or on vectors in the range of the Riesz projection corresponding to the essential spectrum of $\W$ gives zero, so applying $\hat{U}$
on both sides of \eqref{cdh290} we obtain 
\begin{align*}
   & \zeta^r \hat{U}(q) = \sum_{k_1=1}^n \sum_{j_1=0}^{a_{k_1-1}-1}\, \zeta^{r-k_1}\bigg\{   \int_{t_{k_1}^{j_1}}^{t_{k_1}^{j_1}+\beta^{-k_1}} q(u) \d u  \\\nonumber 
        &+ \hat{U}(u_\zeta)\sum_{s=1}^{M+1} C_s \beta^{-k_1 s}\Big( q^{(s-1)}\big(t_{k_1}^{j_1}+\beta^{-k_1}\big)-q^{(s-1)}\big(t_{k_1}^{j_1}\big)\Big)  \frac{1}{s!}\Vert B_s \Vert_{L^1([0,1])} \bigg \}.
\end{align*}
Dividing by $\zeta^r$ we finally obtain \eqref{cdh291}. 
When $q(x)=1$, the only non-zero term comes from the integral and we can see that $\hat{U}(\chi_{[0,1]})=1$, as expected.
\qed 

\begin{appendix}\appheader
\section{The invariant subspace}\label{sec:invsubspace} 
We recall the definition of the following linearly independent  (due to Lemma \ref{lemmaParry}) characteristic functions
\begin{align*}
    \psi_1(x) = \chi_{[0,1]}(x), \quad   \psi_2(x) = \chi_{[0,b_1]}(x), \quad  \ldots, \quad
      \psi_{n}(x) = \chi_{[0,b_{n-1}]}(x).
\end{align*}

Applying the transfer operator $\mathscr{P}$ to each of these functions gives for $1\leq j\leq n-1$ and for Lebesgue almost every $x$ (see also \cite{CS}):
\begin{align*}
    \beta ( \mathscr{P}\psi_j)(x) 
    &=a_{j-1} \psi_1(x)  +\psi_{j+1}(x),
\end{align*}
and
\begin{align*}
    \beta ( \mathscr{P}\psi_n)(x) 
    &=a_{n-1} \psi_1(x).
\end{align*}
This shows that the functions $\psi_1,\ldots,\psi_n$ span  an invariant subspace for $\beta \mathscr{P}$ with a restriction matrix equal to
\begin{align}\label{Pn}
\mathcal{P}_n=    \begin{pmatrix}
        a_0 & a_1 & \cdots & \cdots & a_{j-1} & \cdots & a_{n-2} &  a_{n-1} \\
        1 & 0 &  & \cdots & 0 & \cdots & & 0 \\
        0 & 1 &  & & 0 & & 0& 0 \\
        \vdots & \vdots & \ddots & & 0 & &\vdots & \vdots \\
        0 & 0 & 0 & \ddots & 0 &  & 0 & 0 \\
        0 & 0 & 0 &  & 1 &  & 0 & 0 \\
        \vdots & \vdots & &  & 0 & \ddots & \vdots & \vdots \\
        0 & 0 & 0 & 0 &0 & \cdots & 1 & 0 
    \end{pmatrix}.
\end{align}
Let $\psi_{\ell,j}(x) = x^\ell\psi_j(x)$ with $1\leq j\leq n$ and $\ell\geq 1$. Then using the binomial formula and the previous formulas we have ($j\leq n-1$):
\begin{align*}
    \beta ( \mathscr{P}\psi_{\ell,j})(x) &= \frac{a_{j-1}x^\ell}{\beta^\ell} \psi_1(x) + \frac{x^\ell}{\beta^\ell}\psi_{j+1}(x) + R_{\ell-1,j}(x)\\
    &=\frac{a_{j-1}}{\beta^\ell} \psi_{\ell, 1}(x) + \frac{1}{\beta^\ell}\psi_{\ell,j+1}(x) + \sum_{k=0}^{\ell -1} \sum_{m=1}^n C_{\ell,j, k,m} \, \psi_{k,m}.
\end{align*}
where  $R_{\ell-1,j}$ are remaining terms of degree at most $\ell-1$ in $x$. Similarly: 
\begin{align}
    \beta ( \mathscr{P}\psi_{\ell,n})(x) =\frac{a_{n-1}}{\beta^\ell} \psi_{\ell, 1}(x) + \sum_{k=0}^{\ell -1}\sum_{m=1}^n C_{\ell, k,m} \, \psi_{k,m}.
\end{align}
Now let us renumber the functions $\psi_{\ell,j}(x)=x^\ell \psi_j(x)$ with $0\leq \ell \leq \nu-1$ and $1\leq j\leq n$ as 
\begin{align}\label{eq:ks10}
    \Psi_1=\psi_{0,1},\, \ldots , \, \Psi_n=\psi_{0,n},\, \Psi_{n+1}=\psi_{1,1},\, \ldots,\,  \Psi_{2n}=\psi_{1,n},\, \ldots,\, \Psi_{\nu n}=\psi_{\nu,n}.
\end{align}
We see that the restriction matrix $\mathcal{P}_{\nu,n}$ to this $\nu n$ dimensional subspace generated by the functions $\Psi_{k}$ with $1\leq k \leq \nu n$ has $\nu$ blocks on the ``diagonal" which are all of the type $n\times n$ and equal to $\beta^{-k } \mathcal{P}_n$ with $0\leq k\leq \nu-1$:  
\begin{align}\label{cdh23}
\mathcal{P}_{\nu,n}=    \begin{pmatrix}
        \mathcal{P}_n & \star & \star & \cdots & \star \\
        {\bf 0}_n & \beta^{-1}\mathcal{P}_n& \star & \cdots & \star \\
        {\bf 0}_n & {\bf 0}_n & \beta^{-2}\mathcal{P}_n & \cdots & \star\\
        \vdots & \vdots &\quad \ddots & & \star \\
        {\bf 0}_n & {\bf 0}_n & {\bf 0}_n & \ddots & \star  \\
        {\bf 0}_n & {\bf 0}_n & {\bf 0}_n & \cdots & \beta^{-(\nu-1)}\mathcal{P}_n
    \end{pmatrix}.
    \end{align}
This implies that the spectrum of $\mathcal{P}_{\nu,n}$ is given by 
\begin{align}\label{eq:eigenvaluesPnun}
\sigma(\mathcal{P}_{\nu,n})=\bigcup_{\ell=0}^{\nu -1} \beta^{-\ell} \sigma\big (\mathcal{P}_{ n}\big ).
\end{align}

\begin{lemma}\label{lemmahc1}
We have that $\sigma(\mathcal{P}_n)=\{\beta,\lambda_2,...,\lambda_n\}$ where $\beta>1$ is a simple eigenvalue and $0<|\lambda_j|<\beta$ for all $2\leq j\leq n$. Moreover, the geometric multiplicity of all eigenvalues is always equal to $1$. 
\end{lemma}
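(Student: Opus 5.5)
The plan is to recognize $\mathcal{P}_n$ as (the transpose of) a companion matrix and read off everything from its characteristic polynomial. First I compute $\det(\lambda-\mathcal{P}_n)$ by expanding along the first row, or by induction on $n$ using the subdiagonal of ones. This gives
\begin{equation*}
\det(\lambda-\mathcal{P}_n)=\lambda^n-a_0\lambda^{n-1}-a_1\lambda^{n-2}-\cdots-a_{n-1}=p(\lambda),
\end{equation*}
the Parry polynomial from \eqref{dec2'}. Hence $\sigma(\mathcal{P}_n)$ is exactly the set of roots of $p$, and $\beta$ belongs to it.

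Next I treat the nonvanishing and the location of the eigenvalues. No root vanishes, since $p(0)=-a_{n-1}\neq 0$ because $a_0a_{n-1}\geq 1$. For the bound $|\lambda_j|\le\beta$ I use the function $f$ from \eqref{dec2}. If $p(\lambda)=0$ then $1=\sum_{k=1}^n a_{k-1}\lambda^{-k}$, so
\begin{equation*}
1\leq \sum_{k=1}^n a_{k-1}|\lambda|^{-k}.
\end{equation*}
Since $x\mapsto \sum a_{k-1}x^{-k}$ is strictly decreasing on $(0,\infty)$ and equals $1$ at $x=\beta$, this forces $|\lambda|\leq\beta$. Simplicity of $\beta$ follows from $p'(\beta)=\beta^n f'(\beta)>0$, because $p(z)=z^nf(z)$ and $f(\beta)=0$.

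The main obstacle is the strict inequality $|\lambda|<\beta$ for $\lambda\neq\beta$. Suppose $|\lambda|=\beta$. Then equality holds in the triangle inequality
\begin{equation*}
1=\Big|\sum_k a_{k-1}\lambda^{-k}\Big|\leq \sum_k a_{k-1}\beta^{-k}=1,
\end{equation*}
so all nonzero terms $a_{k-1}\lambda^{-k}$ share a common argument. That argument must be $0$, since the sum equals $1$, so each such term is positive. Because $a_0\geq 1$, the term $a_0\lambda^{-1}$ is positive, so $\lambda>0$, and therefore $\lambda=\beta$. Notice that only $a_0\ge1$ is needed here, with no gcd condition on the nonzero indices.

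Finally, for the geometric multiplicity I note that for any $\lambda\in\C$ the matrix $\lambda-\mathcal{P}_n$ has rank at least $n-1$. Indeed, deleting its first row and last column leaves a lower triangular $(n-1)\times(n-1)$ matrix with diagonal entries $-1$, whose determinant is $(-1)^{n-1}\neq 0$. Hence $\dim\ker(\lambda-\mathcal{P}_n)\leq 1$, and this dimension equals $1$ at each eigenvalue. This completes the plan.
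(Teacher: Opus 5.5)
Your proposal is correct. The spectral part follows the paper's proof closely. The paper also reduces to the zeros of $f(z)=1-\sum_j a_j z^{-j-1}$ and gets $|z|\le\beta$ from $|f(z)|\ge 1-\sum_j a_j|z|^{-j-1}>0$ for $|z|>\beta$. It gets simplicity of $\beta$ from $f'(\beta)\neq 0$. It excludes $\beta e^{i\phi}$ with $\phi\neq 0$ by taking real parts, $0=\sum_j a_j\beta^{-j-1}\big(1-\cos((j+1)\phi)\big)\ge a_0\beta^{-1}(1-\cos\phi)$, which uses $a_0\ge 1$ exactly as your equality case in the triangle inequality does. You also check $\lambda_j\neq 0$ via $p(0)=-a_{n-1}\neq 0$, which the paper leaves implicit.

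The genuine difference is the geometric multiplicity. The paper writes a candidate eigenvector as $u=\sum_k c_k\psi_k$ and solves $\beta\mathscr{P}u=\lambda u$ using $\beta\mathscr{P}\psi_j=a_{j-1}\psi_1+\psi_{j+1}$. This forces $u=c_1\sum_{k=0}^{n-1}\lambda^{-k}\chi_{[0,b_k]}$. Your nonvanishing $(n-1)\times(n-1)$ minor argument is shorter and purely algebraic; it is the standard proof that companion matrices are nonderogatory. However, it does not produce the eigenvector. The paper's computation yields the explicit formula \eqref{sep10}, which is reused later: for $u_1$, for $\hat U(u)=h_n'(\lambda)$ in Theorem~\ref{thm:left}, and in the Riesz-projection arguments.

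A small slip: deleting the first row and last column of $\lambda-\mathcal{P}_n$ leaves an upper (not lower) bidiagonal matrix, with $-1$ on the diagonal and $\lambda$ above it. The determinant $(-1)^{n-1}$ is unaffected.
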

\begin{proof} Another way of formulating the lemma is that $\beta$ is a simple zero for ${\rm Det}(z\bb1_n -\mathcal{P}_n)$, and all  other zeros of this polynomial have a length less than $\beta$. 
   We have 
   \begin{equation*}
       {\rm Det}(z\bb1_n -\mathcal{P}_n)=(z-a_0)z^{n-1}-a_1z^{n-2}-\cdots-a_{n-1}=z^n-(a_0z^{n-1}+a_1z^{n-2} +\cdots +a_{n-1}).
   \end{equation*}
   Then $z$ is a zero for the determinant iff $z$ is a zero for the function 
   \begin{equation}\label{pt1}
    f(z)=1-(a_0z^{-1}+a_1z^{-2} +\cdots +a_{n-1}z^{-n}).
    \end{equation}
    {We already know that $\beta>a_0$ is the unique positive solution of $f(x)=0$ with $x>0$. Since 
     ${\rm Det}(z\bb1_n -\mathcal{P}_n)=z^nf(z)$ and $f'(\beta)\neq 0$ we conclude that $\beta$ is a simple root. 
     
    Next, let us assume that $|z|>\beta$. Then 
    \begin{equation*}
        \vert f(z) \vert \geq 1-\sum_{j=0}^{n-1} a_j \vert z \vert^{-j-1}>f(\beta)=0.
    \end{equation*}
    This implies that the zeros of $f$ have a length at most $\beta$.
    
    Now let us assume that there exists $\phi\in (0,\pi]$ such that $\rho=\beta e^{i\phi}$ is a root. We have 
    \begin{equation*}
        0=f(\beta e^{i\phi})=1- \sum_{j=0}^{n-1} a_j \beta^{-j-1} e^{-i (j+1)\phi}.
    \end{equation*}
Taking the real part and using \eqref{eq:fundeqbeta} we have
\begin{align*}
0&=1-\sum_{j\geq 0} a_j \beta^{-j-1} \cos( (j+1)\phi)=\sum_{j\geq 0} a_j \beta^{-j-1} \big (1-\cos( (j+1)\phi)\big )\\
&\geq a_0\beta^{-1} (1-\cos(\phi))
\end{align*}
which leads to $\phi=0$, contradiction. This implies that any zero of $f$ other than $\beta$ must also have a length less than $\beta$. }

{ Now let us show that the geometric multiplicity of all eigenvalues equals one. We have 
\begin{equation}\label{aug70}
\begin{aligned}
    \beta (\mathscr{P}\psi_j)(x) &= a_{j-1}\psi_1(x) + \psi_{j+1}(x) , \qquad \textup{for } j \in \{1,\ldots,n-1\}, \\
    \beta (\mathscr{P}\psi_n)(x) &= a_{n-1}\psi_1(x).
\end{aligned}
\end{equation}
Any eigenfunction corresponding to an eigenvalue $\lambda$ is given by a linear combination 
\begin{align*}
    u(x) = c_1 \psi_1(x) + c_2 \psi_2 + \cdots + c_n \psi_n(x).
\end{align*}
Then $\beta \W u=\lambda u$, \eqref{aug70} and $1=\sum_{k=1}^n\frac{a_{k-1}}{\lambda^k}$ lead to 
\begin{equation}\label{sep10}
u=c_1 \bigg( \psi_1(x) + \frac{1}{\lambda} \psi_2(x) + \cdots + \frac{1}{\lambda^{n-1}}\psi_n(x)\bigg)=c_1\sum_{k=0}^{n-1}\lambda^{-k} \chi_{[0,b_k]},
\end{equation}
which shows that this is a one-dimensional subspace. When $\lambda=\beta>1$ we may choose  a unique $c_1 = C_0$, such that
\begin{align}\label{cdh1}
    u_1(x) = C_0 \bigg( \psi_1(x) + \frac{1}{\beta} \psi_2(x) + \cdots + \frac{1}{\beta^{n-1}}\psi_n(x)\bigg) \quad \text{and}\quad \int_0^1 u_1(x) \d x=1.
\end{align}
Let us also compute a left eigenvector $w_1=[d_1,d_2,\ldots ,d_n]$ such that $w_1\mathcal{P}_n=\beta w_1$. Imposing $d_1=1$ we  have the equations 
\begin{equation*}
    d_1=1,\quad d_2=\beta-a_0,\quad d_3=\beta d_2-a_1=\beta^2-a_0\beta -a_1,\, \ldots .
\end{equation*}
Comparing with \eqref{dec1} we see that 
$w_1=[1,b_1,b_2,\ldots, b_{n-1}].$ We see that the constant $C_0$ from \eqref{cdh1} obeys 
\begin{equation*}
    C_0\bigg( 1 + \frac{b_1}{\beta} + \cdots + \frac{b_{n-1}}{\beta^{n-1}}\bigg) =1,
\end{equation*}
which implies that the rank one Riesz projection $\Pi_\beta$ corresponding to $\beta$ acts on a given basis element $\psi_k(x)$ as  
$(\Pi_\beta \psi_k)(x)=b_{k-1} \, u_1(x)$, $1\leq k\leq n.$ In particular:   
\begin{equation}\label{mars1}
    (\Pi_\beta \psi_1)(x)= u_1(x).  
\end{equation}
}
\end{proof}

\begin{remark}
    There exist Parry polynomials which have eigenvalues with algebraic multiplicity larger than one. For example: 
    \begin{equation*}
        p(z)=z^7-5z^6-5z^5-z^4-5z^3-z^2-3z -5,
    \end{equation*}
where  $a_0=5$, $a_1=5$, $a_2=1$, $a_3=5$, $a_4=1$, $a_5=3$ and $a_6=5$. We see that the Parry conditions \eqref{Parrycon} are satisfied, and at the same time: 
\begin{equation*}
    p(z)=(z + 1)^2 (z^5 - 7 z^4 + 8 z^3 - 10 z^2 + 7 z - 5),
\end{equation*}
which shows that $-1$ has algebraic multiplicity 2. Since we know from Lemma \ref{lemmahc1} that the geometric multiplicity is always 1, any possible multiple eigenvalue will have a non-trivial Jordan block. 
\end{remark}

 We also need to investigate how the restriction matrix looks. when we use the basis consisting of scaled Bernoulli polynomials $H_{sn+j}$ from Definition~\ref{Giz}. For $s \in \{0,\ldots,\nu-1\}$, $j \in \{1,\ldots,n\}$ and $k \in \{1,\ldots,\nu n\}$ we have 
\begin{align}\label{cdh26}
    \big(\mathscr{P}H_{sn+j}\big)(x) =\sum_{k=1}^{\nu n} \widetilde{p}_{k,sn+j} H_k(x),
\end{align}
where $\widetilde{\mathcal{P}}_{\nu,n}= \big\{\widetilde{p}_{k,sn+j}\big\}$ is a matrix conjugated to $\beta^{-1}\, \mathcal{P}_{\nu,n}$. We note that the degree of the polynomial in $H_{sn+j}$ coincides with the degree of $\Psi_{sn+j}=x^s\psi_j(x)$'s from \eqref{eq:ks10}, which implies that the $\nu n \times \nu n$ ``change of basis" invertible matrix $S$ which transforms the $\Psi$'s into $H$'s has the same block structure as $\mathcal{P}_{\nu,n}$, that is 
\begin{align*}
S_{\nu,n}=    \begin{pmatrix}
        \mathcal{S}_{n,1} & \star & \star & \cdots & \star \\
        {\bf 0}_n & \mathcal{S}_{n,2}& \star & \cdots & \star \\
        {\bf 0}_n & {\bf 0}_n & \mathcal{S}_{n,3} & \cdots & \star\\
        \vdots & \vdots &\quad \ddots & & \star \\
        {\bf 0}_n & {\bf 0}_n & {\bf 0}_n & \ddots & \star  \\
        {\bf 0}_n & {\bf 0}_n & {\bf 0}_n & \cdots & \mathcal{S}_{n,\nu}
    \end{pmatrix},\quad S^{-1}_{\nu,n}=    \begin{pmatrix}
        \mathcal{S}^{-1}_{n,1} & \star & \star & \cdots & \star \\
        {\bf 0}_n & \mathcal{S}^{-1}_{n,2}& \star & \cdots & \star \\
        {\bf 0}_n & {\bf 0}_n & \mathcal{S}^{-1}_{n,3} & \cdots & \star\\
        \vdots & \vdots &\quad \ddots & & \star \\
        {\bf 0}_n & {\bf 0}_n & {\bf 0}_n & \ddots & \star  \\
        {\bf 0}_n & {\bf 0}_n & {\bf 0}_n & \cdots & \mathcal{S}^{-1}_{n,\nu}
    \end{pmatrix}.
\end{align*}
Thus $\widetilde{\mathcal{P}}_{\nu,n}=S_{\nu, n} \big (\beta^{-1}\mathcal{P}_{\nu, n}\big ) S^{-1}_{\nu,n}$ still has a block structure of the type

\begin{align}\label{cdh30}
\widetilde{\mathcal{P}}_{\nu,n}=    \begin{pmatrix}
        \mathcal{A}_{n,1} & \star & \star & \cdots & \star \\
        {\bf 0}_n & \mathcal{A}_{n,2}& \star & \cdots & \star \\
        {\bf 0}_n & {\bf 0}_n & \mathcal{A}_{n,3} & \cdots & \star\\
        \vdots & \vdots &\quad \ddots & & \star \\
        {\bf 0}_n & {\bf 0}_n & {\bf 0}_n & \ddots & \star  \\
        {\bf 0}_n & {\bf 0}_n & {\bf 0}_n & \cdots & \mathcal{A}_{n,\nu}
    \end{pmatrix}
\end{align}
where $\mathcal{A}_{n,\ell}=\mathcal{S}_{n,\ell} \big (\beta^{-\ell}\mathcal{P}_n\big ) \mathcal{S}^{-1}_{n,\ell}$ for all $1\leq \ell\leq \nu$. This implies that the spectrum of $\widetilde{\mathcal{P}}_{\nu,n}$ is given by 
\begin{align}\label{cdh31}
\sigma(\widetilde{\mathcal{P}}_{\nu,n})=\bigcup_{\ell=1}^{\nu} \beta^{-\ell} \sigma\big (\mathcal{P}_{ n}\big ).
\end{align}
Now assume that $\widetilde{\mathcal{P}}_{\nu,n}$ has exactly $2\leq E\leq \nu n$ distinct eigenvalues and let $\pi^{(a)}$ where $1\leq a\leq E$ denote the Riesz projection onto the invariant subspace of each distinct eigenvalue $\zeta_a$.  We have:   
\begin{align*}
  \sum_{a=1}^E \pi^{(a)} = I_{\nu n \times \nu n} \quad \textup{ and } \quad  \widetilde{\mathcal{P}}_{\nu,n} = \pi^{(1)} + \sum_{a=2}^E \pi^{(a)} \widetilde{\mathcal{P}}_{\nu,n} \pi^{(a)},
\end{align*}
where we recall that the projection $\pi^{(1)}$ is rank one since $1$ is a simple eigenvalue. Furthermore, if $1\leq m_a$ denotes the algebraic multiplicity of $\zeta_a$ for $a\geq 2$ there exists $C>0$ such that for all $r\geq 1$ we have
\begin{align*}
    \big\Vert (\pi^{(a)}  \widetilde{\mathcal{P}}_{\nu,n} \pi^{(a)})^r \big\Vert \leq C r^{m_a-1}\vert\zeta_a\vert^r.
\end{align*}
{ 
\begin{remark} By direct computation we may derive the formula of the first block $\mathcal{A}_{n,1}$ which equals (see also \eqref{dec1} for the formulas giving the $b$'s):
\begin{align*}
\mathcal{A}_{n,1}=   \beta^{-1} \begin{pmatrix}
          a_0 &   b_1^{-1} a_1 & \cdots & \cdots &   b_{j-1}^{-1} a_{j-1} & \cdots &  b_{n-2}^{-1} a_{n-2} &   b_{n-1}^{-1} a_{n-1} \\
        b_1 & 0 &  & \cdots & 0 & \cdots & & 0 \\
        0 & b_1^{-1} b_2 &  & & 0 & & 0& 0 \\
        \vdots & \vdots & \ddots & & 0 & &\vdots & \vdots \\
        0 & 0 & 0 & \ddots & 0 &  & 0 & 0 \\
        0 & 0 & 0 &  & b_{j-1}^{-1}b_j &  & 0 & 0 \\
        \vdots & \vdots & &  & 0 & \ddots & \vdots & \vdots \\
        0 & 0 & 0 & 0 &0 & \cdots & b_{n-2}^{-1}b_{n-1} & 0 
    \end{pmatrix}.
\end{align*}
The only non-zero term in the last column equals $1$ due to the identity $a_{n-1}/\beta =b_{n-1}$. Also, using the identity $\beta b_{j-1}-a_{j-1}= b_{j}$ we obtain $\beta^{-1}\big (b_{j-1}^{-1} a_{j-1}+b_{j-1}^{-1}b_j\big )=1$ for all $1\leq j\leq n-1$. This shows that $\mathcal{A}_{n,1}$ is a left-stochastic matrix.
\end{remark}
}

We continue this appendix with several important results regarding the transfer operator.
\begin{lemma}\label{lem:ks11}
    Let $N \in \mathbb{N}$ and $f \in C^N([0,1])$. Let $\vert \lambda_*\vert  <1$ be the length of the second largest root (in absolute value) of $p(z)$. Then:
    \begin{enumerate}
        \item $\int_0^1 \big(\mathscr{P}f\big)(x) \d x = \int_0^1 f(x) \d x$,
        \item $\Vert \mathscr{P}f\Vert_{L^1}\leq \Vert f\Vert_{L^1}$,
        \item $\Vert \mathscr{P} f \Vert_{L^{\infty}} \leq \frac{a_0+1}{\beta} \Vert f \Vert_{L^\infty}$,
        \item $\big(\mathscr{P}^r H_{1}\big) (x) = u_1(x) + \mathcal{O}\big (\frac{\vert \lambda_* \vert^r}{\beta^r} \big )$,
  \item  $\big(\mathscr{P}^r H_{sn+1}\big) (x) = \mathcal{O} (\beta^{-r})$ when $s \geq 1$.
    \end{enumerate}
\end{lemma}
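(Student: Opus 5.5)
We prove the five items in order. Items (i)–(iii) are direct computations with the formula \eqref{eq:mathcalP}. Items (iv) and (v) reduce to finite-dimensional linear algebra on the invariant subspaces from Appendix \ref{sec:invsubspace}.

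For (i), write $\int_0^1 (\mathscr{P}f)(x)\d x=\beta^{-1}\sum_{k=0}^{a_0}\int_0^1 f\big(\tfrac{x+k}{\beta}\big)\d x$ and substitute $y=(x+k)/\beta$. Each term becomes $\int_{k/\beta}^{(k+1)/\beta} f(y)\d y$, where for $k=a_0$ the range is cut off at $1$ because $f$ vanishes outside $[0,1]$. Since $a_0/\beta<1<(a_0+1)/\beta$, these intervals tile $[0,1]$ up to null sets, which gives the identity. For (ii), apply the same substitution to $|f|$ and use $|\mathscr{P}f|\le \mathscr{P}|f|$. For (iii), bound each of the $a_0+1$ terms by $\Vert f\Vert_{L^\infty}$. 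All three statements hold for $f\in L^1$ or $L^\infty$ as appropriate, so the $C^N$ hypothesis is not needed here.

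For (iv), $H_1=\psi_1$ lies in the $n$-dimensional invariant subspace spanned by $\psi_1,\dots,\psi_n$. On this subspace $\mathscr{P}$ acts through the matrix $\beta^{-1}\mathcal{P}_n$ from \eqref{Pn}. By Lemma \ref{lemmahc1}, the eigenvalues of this matrix are $1=\beta/\beta$, which is simple, and $\lambda_j/\beta$ with $|\lambda_j|\le|\lambda_*|<1$. Decompose with Riesz projections: $\beta^{-r}\mathcal{P}_n^r=\Pi_\beta+\sum_{a}(\text{rest})^r$. By \eqref{mars1} the first part maps $\psi_1$ to $u_1$. The remainder has norm $\le C r^{m-1}(|\lambda_*|/\beta)^r$, where $m$ is the largest algebraic multiplicity. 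The polynomial factor must then be removed. We do this either by observing that multiple roots inside the unit disk only cost $r^{m-1}$, which can be absorbed by slightly enlarging $|\lambda_*|$, or by reading $\mathcal{O}$ in the paper's usage, where a fixed $\varepsilon$ loss is harmless in the proof of Theorem \ref{thm:pisot}. Since the subspace is finite dimensional, the matrix norm bound transfers to an $L^\infty$ bound on the coefficients times the $\Vert\psi_j\Vert_{L^\infty}\le 1$.

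For (v), fix $s\ge1$ and take $\nu=s+1$. By \eqref{cdh26}, $H_{sn+1}$ lies in the $\nu n$-dimensional invariant subspace, and $\mathscr{P}$ acts there through $\widetilde{\mathcal{P}}_{\nu,n}$. This matrix is block upper triangular with diagonal blocks $\mathcal{A}_{n,\ell}$, whose spectra are $\beta^{-\ell}\sigma(\mathcal{P}_n)$ by \eqref{cdh31}. The key point is that $H_{sn+1}$ has no component along the Perron eigenvector. Indeed, $\int_0^1 H_{sn+1}=\int_0^1 B_s=0$ for $s\ge1$, and by (i) integration is preserved, so the component along $u_1$ (whose integral is $1$) is zero. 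The relevant spectral projection is that of the eigenvalue $1$ of $\widetilde{\mathcal{P}}_{\nu,n}$, and its left eigenvector is the integration functional. All other eigenvalues have modulus $\le \max(|\lambda_*|/\beta,\ \beta^{-1})\cdot\max(1,\ \ldots)$, that is, they lie among $\beta^{-1}\lambda_j/\beta$ for $j\ge2$ and $\beta^{-\ell}\lambda_j/\beta$ for $\ell\ge1$. The largest of these is $\beta^{-1}$ itself, coming from $\beta^{-1}\cdot\beta/\beta$ in the second block. So $\mathscr{P}^rH_{sn+1}=\mathcal{O}(r^{m'-1}\beta^{-r})$.

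\textbf{The main obstacle} is getting exactly $\beta^{-r}$ without the polynomial factor. This requires $\beta^{-1}$ to be a semisimple eigenvalue of $\widetilde{\mathcal{P}}_{\nu,n}$. Under the Pisot hypothesis $|\lambda_*|<1$, the value $\beta^{-1}$ occurs only once in the union \eqref{cdh31}: the equation $\beta^{-\ell}\lambda_j/\beta=\beta^{-1}$ forces $\lambda_j=\beta^{\ell}$, which by $|\lambda_j|<1$ for $j\ge2$ is possible only with $j=1$, $\ell=1$. Hence $\beta^{-1}$ is simple, its Riesz projection is bounded, and the remaining spectrum lies in a disk of radius $<\beta^{-1}$. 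This gives $\mathcal{O}(\beta^{-r})$ with a constant depending on $s$. For $s\le N$ this constant is uniform, which is all that is used in the proof of Theorem \ref{thm:pisot}.
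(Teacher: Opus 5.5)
\emph{Verdict: there is a gap in item (iv); items (i)--(iii) and (v) are correct.}

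Your items (i)--(iii) and (v) follow the paper's route. You restrict $\mathscr{P}$ to the finite-dimensional invariant subspaces of Appendix \ref{sec:invsubspace}, and you kill the component along $u_1$ using integral preservation. Identifying the integration functional as the left eigenvector for the eigenvalue $1$ is the same as the paper's ``integrate both sides'' step. You then bound the remainder spectrally. In (v) you are more careful than the paper. You check that $\beta^{-1}$ occurs exactly once in \eqref{cdh31}, and that no other eigenvalue has modulus $\beta^{-1}$ because $|\lambda_j|<1$ for $j\ge 2$. That check is what justifies a clean $\mathcal{O}(\beta^{-r})$ with no polynomial factor.

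The gap is in (iv). You correctly note that a nontrivial Jordan block for an eigenvalue of modulus $|\lambda_*|$ would produce a factor $r^{m-1}$. Neither of your two workarounds proves the stated estimate:
\begin{itemize}
\item Replacing $|\lambda_*|$ by $|\lambda_*|+\varepsilon$ only gives $\mathcal{O}\big(((|\lambda_*|+\varepsilon)/\beta)^r\big)$, which is strictly weaker.
\item Appealing to what Theorem \ref{thm:pisot} later needs is not a proof of (iv).
\end{itemize}

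The missing ingredient is Lemma \ref{simple-eig}. The hypothesis $|\lambda_*|<1$ says $p$ is Parry--Pisot, and then $p$ is irreducible over $\mathbb{Q}$. Indeed, suppose $p$ had a monic integer factor not vanishing at $\beta$. All its roots would lie in $0<|z|<1$; they are nonzero since $p(0)=-a_{n-1}\neq 0$. Its constant term would then be a nonzero integer of modulus $<1$, which is impossible. By Gauss's lemma $p$ is irreducible over $\mathbb{Q}$, so $\gcd(p,p')=1$ and all roots are simple. Hence $\beta^{-1}\mathcal{P}_n$ is diagonalizable and $m=1$. Your Riesz-projection argument then gives exactly $\mathcal{O}\big((|\lambda_*|/\beta)^r\big)$, and this is precisely how the paper argues (iv).
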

\begin{proof}
    \begin{enumerate}
        \item \hspace{-0.2cm} - (iii) They follow by straightforward calculations. 
        \setcounter{enumi}{3}
        \item {We have  $H_1=\psi_1=\chi_{[0,1]}$ and we know that the subspace generated by the characteristic functions $\{\psi_j\}_{j=1}^n$ is left invariant by $\mathscr{P}$.  From \eqref{mars1} we see that $u_1$ is constructed from ``the first column" of the spectral projection corresponding to the eigenvalue $1$ of the matrix $\beta^{-1}\mathcal{P}_n$, hence the function $H_1 -u_1$ lies in the subspace corresponding to the other eigenvalues of $\beta^{-1}\mathcal{P}_n$ which are simple (see Lemma \ref{simple-eig}) and bounded by $|\lambda_*|/\beta$, hence $\mathscr{P}^r(H_1-u_1)$ will obey the desired estimate. 
        
        \item The eigenvalues of all ``diagonal blocks" $\mathcal{A}_{n,\ell}$ with $\ell\geq 2$ in \eqref{cdh30} are in absolute value less than or equal to $\beta^{-1}$. We can write $H_{sn+1}(x)=C_1 u_1(x) + \widetilde{\psi}_1(x)$ where $\widetilde{\psi}_1(x)$ belongs to the subspace generated by the range of all other projections $\pi^{(a)}$ with $a\geq 2$. For $r\geq 2$ we have 
        \begin{equation*}
            \mathscr{P}^rH_{sn+1} =C_1 \, u_1 +\mathcal{O}(\beta^{-r})
        \end{equation*}
        and by integrating on both sides, and using (i) together with the fact that  $\int_0^1H_{sn+1}(x)\d x =0$ we get $C_1=0$.  
        }
    \end{enumerate}
\end{proof}

{
For the special case when the polynomial is Parry-Pisot, we recall from Definition~\ref{def1} that all eigenvalues other than $\beta$ lie in the open unit disk and we thus get a more refined result than above when $s=1$: 

\begin{lemma}\label{lemma26}
   Let $\vert \lambda_*\vert  <1$ be the length of the second largest root (in absolute value) of $p(z)$.  Then the second largest eigenvalue (in absolute value) of $\widetilde{\mathcal{P}}_{2,n}$ equals $\beta^{-1}$, is simple, and there exists a corresponding eigenfunction $v_1(x)\neq 0$ of $\mathscr{P}$ consisting of a linear combination of piecewise polynomials, such that for all $j\geq 1$ we have 
   \begin{align}\label{cdh10}
       \mathscr{P}^j H_{n+1}=\beta^{-j} v_1 +\mathcal{O}\Big (\big (|\lambda_*|/\beta\big )^j\Big ),\quad \int_0^1 v_1(x) \d x=0. 
   \end{align}
\end{lemma}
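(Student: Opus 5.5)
We work in the finite-dimensional invariant subspace spanned by $H_1,\dots,H_{2n}$. By \eqref{cdh26} with $\nu=2$, $\mathscr{P}$ acts there through the matrix $\widetilde{\mathcal{P}}_{2,n}$. By \eqref{cdh30} this matrix is block upper triangular with diagonal blocks $\mathcal{A}_{n,1}$ and $\mathcal{A}_{n,2}$, conjugate to $\beta^{-1}\mathcal{P}_n$ and $\beta^{-2}\mathcal{P}_n$ respectively. By \eqref{cdh31} and Lemma \ref{lemmahc1}, the spectrum of $\widetilde{\mathcal{P}}_{2,n}$ is
\[
\{1,\lambda_2/\beta,\dots,\lambda_n/\beta\}\cup\{\beta^{-1},\lambda_2/\beta^2,\dots,\lambda_n/\beta^2\}.
\]
In the Parry-Pisot case $|\lambda_j|\le|\lambda_*|<1$ for $j\ge 2$. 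Hence $|\lambda_j/\beta|\le|\lambda_*|/\beta<\beta^{-1}$ and $|\lambda_j/\beta^2|<\beta^{-1}$. Since $\beta$ is a simple root of $p$, the value $\beta^{-1}=\beta/\beta^2$ occurs exactly once and is strictly separated from all other eigenvalues except $1$. So $\beta^{-1}$ is the second largest eigenvalue in modulus, and it is algebraically simple. Let $\pi_{1/\beta}$ be its rank one Riesz projection and $\pi^{(1)}$ the projection for the eigenvalue $1$.

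Next we decompose $H_{n+1}$ along the spectral projections:
\[
H_{n+1}=\pi^{(1)}H_{n+1}+\pi_{1/\beta}H_{n+1}+R,
\]
where $R$ lies in the sum of the ranges of the remaining projections. We set $v_1:=\pi_{1/\beta}H_{n+1}$, read as a function through the basis $\{H_k\}$. It is a linear combination of the $H_k$, hence a piecewise polynomial of degree at most one, and $\mathscr{P}v_1=\beta^{-1}v_1$. All remaining eigenvalues have modulus at most $|\lambda_*|/\beta$, and the matrix is finite-dimensional. Therefore $\|\mathscr{P}^jR\|_{L^\infty}\le C\,j^{m-1}(|\lambda_*|/\beta)^j$, where $m$ is the maximal algebraic multiplicity. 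The polynomial factor should be removable, since the blocks $\mathcal{A}_{n,1}$ and $\mathcal{A}_{n,2}$ are conjugate to scalings of $\mathcal{P}_n$. If some $\lambda_j$ were multiple, one could absorb the factor by slightly enlarging $|\lambda_*|$; the statement is used only with some rate strictly below $\beta^{-1}$. The component $\pi^{(1)}H_{n+1}$ is a multiple $C_1u_1$. Applying Lemma \ref{lem:ks11}(i) gives $\int_0^1\mathscr{P}^jH_{n+1}=\int_0^1H_{n+1}=\int_0^1B_1=0$. Letting $j\to\infty$ forces $C_1=0$, exactly as in the proof of Lemma \ref{lem:ks11}(v). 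Integrating $\mathscr{P}v_1=\beta^{-1}v_1$ and using Lemma \ref{lem:ks11}(i) again yields $(1-\beta^{-1})\int_0^1 v_1=0$, hence $\int_0^1v_1=0$. This gives \eqref{cdh10}.

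The main obstacle is showing $v_1\neq 0$, that is, $\pi_{1/\beta}H_{n+1}\neq 0$. The plan is to use the block structure. Write $H_{n+1}=(0,e_1)$ in block coordinates, where $e_1$ is the first basis vector of the second block. The Riesz projection of a block upper triangular matrix for an eigenvalue belonging only to the lower block $\mathcal{A}_{n,2}$ has lower-right entry equal to the Riesz projection of $\mathcal{A}_{n,2}$ at $\beta^{-1}$. So it suffices to show that the projection of $\mathcal{A}_{n,2}$ onto its top eigenvalue does not annihilate $e_1$.

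This requires a positivity argument. The matrix $\beta\mathcal{A}_{n,2}$ is conjugate to $\beta^{-1}\mathcal{P}_n$ via $\mathcal{S}_{n,2}$, and it should be a left-stochastic matrix of the same type as $\mathcal{A}_{n,1}$. This follows by repeating the computation in the remark above with the rescaled functions $H_{n+j}$. Its Perron projection is then $x\mapsto(\mathbf 1^Tx)\,w$ with $w>0$. Since $\mathbf 1^Te_1=1\neq 0$, the projection of $e_1$ is nonzero. If the direct stochasticity check fails, I would instead compute $\mathscr{P}H_{n+1}$ explicitly, as in Appendix \ref{sec:invsubspace}. The leading-degree part transforms by $\beta^{-2}\mathcal{P}_n$ acting on $x\psi_1$, whose Perron component is $b_0\,x\,u_1\neq0$ by \eqref{mars1}. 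Conjugating back through $\mathcal{S}_{n,2}$ would then give nontriviality.
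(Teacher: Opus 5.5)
Your argument follows the paper's proof: the block-triangular structure of $\widetilde{\mathcal{P}}_{2,n}$ from \eqref{cdh30}, the simple eigenvalue $\beta^{-1}$ coming only from $\mathcal{A}_{n,2}$, $v_1$ defined as the rank-one Riesz projection of $H_{n+1}$, and integral preservation to remove the $u_1$-component and to get $\int_0^1 v_1=0$.

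The paper never checks $v_1\neq 0$, so your last paragraph goes beyond it. However, your first route there is false: $\beta\mathcal{A}_{n,2}$ is \emph{not} left-stochastic.
\begin{itemize}
\item Write $V_0=\mathrm{Span}\{\psi_1,\dots,\psi_n\}$ and $V_1=\mathrm{Span}\{H_1,\dots,H_{2n}\}$. Modulo $V_0$ one has $H_{n+j}\equiv 4b_{j-1}^{-2}\,x\psi_j$ and $\mathscr{P}(x\psi_j)\equiv\beta^{-2}(a_{j-1}x\psi_1+x\psi_{j+1})$.
\item Hence column $j$ of $\beta\mathcal{A}_{n,2}$ has entries $a_{j-1}/(\beta b_{j-1}^2)$ and $b_j^2/(\beta b_{j-1}^2)$. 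For $j=1$ these sum to $1-b_1(1-b_1)/\beta<1$.
\item Stochasticity of $\mathcal{A}_{n,1}$ is just conservation of $\int_0^1 H_j=1$. There is no analogue of this for the mean-zero functions $H_{n+j}$.
\end{itemize}
The positive left eigenvector is instead $[1,b_1^{-1},\dots,b_{n-1}^{-1}]$, which you can check using $\beta b_{j-1}=a_{j-1}+b_j$. It still pairs to $1$ with $e_1$, so a Perron--Frobenius version of your idea survives.

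Your fallback is correct and is the cleanest route. On $V_1/V_0$ the action is $\beta^{-2}\mathcal{P}_n$ in the basis $[x\psi_j]$, and Riesz projections pass to the quotient (this is your lower-right-block observation). By \eqref{mars1}, the projection sends $[H_{n+1}]=4[x\psi_1]$ to $4[xu_1]\neq 0$. Hence $v_1\notin V_0$; in fact $v_1=4xu_1$ plus an element of $V_0$.

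You also do not need to enlarge $|\lambda_*|$, and doing so would not give the stated rate. By Lemma \ref{simple-eig} all roots of $p$ are simple. A coincidence $\lambda_j/\beta=\lambda_k/\beta^2$ between the two blocks has modulus at most $|\lambda_*|/\beta^2$. So every eigenvalue of modulus exactly $|\lambda_*|/\beta$ is algebraically simple, and the remainder is exactly $\mathcal{O}\big((|\lambda_*|/\beta)^j\big)$.
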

\begin{proof}
    The second largest in absolute value of the eigenvalues of the first $n\times n$ block $\mathcal{A}_{n,1}$ has a length of $|\lambda_*|/\beta <\beta^{-1}$.  Also, $\W^j H_{n+1}$ stays in the subspace corresponding to the first two blocks of the matrix in \eqref{cdh30}. The second block  $\mathcal{A}_{n,2}$ is conjugate to and has the same spectrum as $\beta^{-2}\mathcal{P}_n$ or $\beta^{-1}\mathcal{A}_{n,1}$. For these two blocks taken together, the second largest eigenvalue becomes $1/\beta$ while $\lambda_*/\beta$ becomes the third largest. The Riesz projection of the eigenvalue $1/\beta$ coming from the second block has rank one, thus when the projection acts on $H_{n+1}$, the result is a function $v_1$ which is a linear combination of piecewise polynomials of degree at most one. 

    Since $\int_0^1 v_1(x) \d x=\int_0^1(\mathscr{P}v_1)(x) \d x=\beta^{-1}\int_0^1 v_1(x) \d x$, we conclude that $\int_0^1 v_1(x) \d x=0$.   Now we may write 
\begin{equation*}
    H_{n+1}(x)= C_1 u_1(x) +v_1(x)+ \widetilde{\psi}_{n+1}(x)
\end{equation*}
where $\mathscr{P}u_1=u_1$, $\mathscr{P}v_1=\beta^{-1} v_1$, and $\widetilde{\psi}_{n+1}$ is in the range of the projections corresponding to eigenvalues of $\widetilde{\mathcal{P}}_{2,n}$ with a length at most equal to $|\lambda_*|/\beta<1/\beta$. The coefficient $C_1$ must be zero because the integrals of $H_{n+1}$ and $v_1$ are zero, where again we use that $\mathscr{P}^j$ preserves the value of the integral.  This leads to \eqref{cdh10}.
\end{proof}

}

\section{Splitting  the interval $[0,1]$}\label{sec:splittingofinterval}
We recall that the first splitting of the interval $[0,1]$ is given by
\begin{align*}
    t_{k_1}^{j_1} = 
    \begin{cases}
\frac{j_1}{\beta} & 0 \leq j_1 \leq a_{0}-1 \qquad \textup{when } k_1 = 1 \\
\frac{a_0}{\beta} + \frac{j_1}{\beta^2} & 0 \leq j_1 \leq a_{1}-1 \qquad \textup{when } k_1 = 2 \\
\frac{a_0}{\beta} + \frac{a_1}{\beta^2} + \frac{j_1}{\beta^3} & 0 \leq j_1 
\leq a_{2}-1 \qquad \textup{when } k_1 = 3 \\
\hspace{1cm}\vdots & \\
\frac{a_0}{\beta} + \frac{a_1}{\beta^2} + \cdots + \frac{a_{n-2}}{\beta^{n-1}} + \frac{j_1}{\beta^n} & 0 \leq j_1 \leq a_{n-1}-1 \qquad \textup{when } k_1 = n
    \end{cases}
\end{align*}
where we use the convention that if $a_{i-1}=0$, for $i \in  \{2,\ldots,n-1\}$, then we omit the case $k_1=i$, since this implies that we have no intervals of length $\beta^{-i}$. 

After $m$ splittings, we have
\begin{align*}
    t_{k_1k_2\cdots k_m}^{j_1j_2 \cdots j_m} = t_{k_1k_2\cdots k_{m-1}}^{j_1j_2 \cdots j_{m-1}} + \beta^{-(k_1 + k_2 + \cdots + k_{m-1})} t_{k_m}^{j_m}.
\end{align*}
Furthermore, recall the shorthand notation
\begin{align*}
    \bk_m &\coloneqq (k_1,k_2,\ldots,k_m), \\
    \bj_m &\coloneqq (j_1,j_2,\ldots,j_m), \\
    t_{\bk_m}^{\bj_m} &\coloneqq t_{k_1k_2\cdots k_m}^{j_1j_2 \cdots j_m}, \\
    \vert \bk_m \vert & \coloneqq k_1 + k_2+ \cdots + k_m
\end{align*}
and that the points $t_{\bk_m}^{\bj_m}$ and $t_{\bk_m}^{\bj_m} + \beta^{- \vert \bk_m \vert}$ are ``consecutive points''.

\begin{lemma}\label{lem:splittingofpoints0to1}
    Let $m \geq 2$. Then for all $1\leq i\leq n$  with $a_{i-1}>0$ and $0\leq j_1\leq a_{i-1}-1$ we have 
    \begin{align}\label{fejl1}
        t_{1k_2\cdots k_m}^{\bj_m} &= \frac{j_1}{\beta} + \beta^{-1} t_{k_2\cdots k_m}^{j_2\cdots j_m}
        \quad  \text{when} \quad i=1,\end{align}
        and if $i\geq 2$
        \begin{align}\label{fejl2}
         t_{ik_2\cdots k_m}^{\bj_m} &= \frac{a_0}{\beta} + \frac{a_1}{\beta^2} + \cdots + \frac{a_{i-2}}{\beta^{i-1}} + \beta^{-(i-1)} t_{1k_2\cdots k_m}^{\bj_m}. 
    \end{align}
    
\end{lemma}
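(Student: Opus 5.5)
The plan is to unwind the recursive definition of $t_{\bk_m}^{\bj_m}$ into a closed form and then read off both identities directly. First I will establish, by induction on $m$, the closed expression
\begin{equation*}
t_{k_1k_2\cdots k_m}^{j_1j_2\cdots j_m} = t_{k_1}^{j_1} + \beta^{-k_1}\, t_{k_2\cdots k_m}^{j_2\cdots j_m}, \qquad m\geq 2.
\end{equation*}
For $m=2$ this is exactly the definition $t_{k_1k_2}^{j_1j_2}=t_{k_1}^{j_1}+\beta^{-k_1}t_{k_2}^{j_2}$. For the inductive step I apply the recursive definition to the left-hand side:
\begin{equation*}
t_{\bk_m}^{\bj_m}=t_{\bk_{m-1}}^{\bj_{m-1}}+\beta^{-|\bk_{m-1}|}t_{k_m}^{j_m}.
\end{equation*}
Then I use the induction hypothesis on $t_{\bk_{m-1}}^{\bj_{m-1}}$. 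Finally I regroup using
\begin{equation*}
\beta^{-|\bk_{m-1}|}=\beta^{-k_1}\beta^{-(k_2+\cdots+k_{m-1})}
\end{equation*}
together with the recursive definition applied to the shorter word $t_{k_2\cdots k_m}^{j_2\cdots j_m}$. This is a one-line associativity check, so the induction closes.

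With this formula in hand, \eqref{fejl1} follows immediately. Take $k_1=1$, so that $t_1^{j_1}=j_1/\beta$ and $\beta^{-k_1}=\beta^{-1}$.

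For \eqref{fejl2}, take $k_1=i\geq 2$. The closed form gives
\begin{equation*}
t_{ik_2\cdots k_m}^{\bj_m}=\frac{a_0}{\beta}+\cdots+\frac{a_{i-2}}{\beta^{i-1}}+\frac{j_1}{\beta^{i}}+\beta^{-i}t_{k_2\cdots k_m}^{j_2\cdots j_m}.
\end{equation*}
On the other hand, applying \eqref{fejl1}, which has already been proved, to the same $j_1$ and the same tail gives
\begin{equation*}
\beta^{-(i-1)}t_{1k_2\cdots k_m}^{\bj_m}=\beta^{-(i-1)}\Big(\frac{j_1}{\beta}+\beta^{-1}t_{k_2\cdots k_m}^{j_2\cdots j_m}\Big)=\frac{j_1}{\beta^i}+\beta^{-i}t_{k_2\cdots k_m}^{j_2\cdots j_m}.
\end{equation*}
Comparing the two displays yields \eqref{fejl2}.

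The only subtle point, and the step I would check most carefully, is the meaning of $t_{1k_2\cdots k_m}^{\bj_m}$ when $j_1$ ranges up to $a_{i-1}-1$ rather than $a_0-1$. Here I will treat it formally as the expression $j_1/\beta+\beta^{-1}t_{k_2\cdots k_m}^{j_2\cdots j_m}$, which is legitimate because the recursion is purely algebraic. One then has to verify that this is consistent with the admissible range. This holds for $i\geq 2$ when $a_{i-1}\leq a_0$, which is guaranteed by the Parry condition \eqref{Parrycon} applied to the first digits. Even without this consistency check the identities hold as formal equalities, which is all that is used later, in Lemma \ref{lemmacrux} and Lemma \ref{lemma-relabeling}.
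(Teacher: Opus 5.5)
Your proof is correct and is essentially the paper's argument, namely induction on $m$ via the recursion $t_{\bk_m}^{\bj_m}=t_{\bk_{m-1}}^{\bj_{m-1}}+\beta^{-|\bk_{m-1}|}t_{k_m}^{j_m}$. The only difference is organizational: you prove the identity $t_{k_1\cdots k_m}^{j_1\cdots j_m}=t_{k_1}^{j_1}+\beta^{-k_1}t_{k_2\cdots k_m}^{j_2\cdots j_m}$ once for every $k_1$ and then get \eqref{fejl2} algebraically from it and \eqref{fejl1}, whereas the paper runs a second, parallel induction for \eqref{fejl2}.

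Do not present your check that $a_{i-1}\le a_0$ (from \eqref{Parrycon}) as optional. The identities do hold formally without it. However, Lemma \ref{lemmacrux} reduces the case $k_1=i$ to the case $k_1=1$, and that step needs $t_{1k_2\cdots k_m}^{\bj_m}$ to be a genuine partition point with $j_1\le a_0-1$.
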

\begin{proof}
    We first prove \eqref{fejl1} by induction in $m\geq 2$. If $m=2$ then this is just the definition of $t_{1k_2}^{j_1j_2}$.  Then
    \begin{align*}
    t_{1k_2\cdots k_{m+1}}^{\bj_{m+1}} &= t_{1k_2\cdots k_{m}}^{\bj_{m}}+\beta^{-(1+k_2+\cdots + k_{m})}t_{k_{m+1}}^{j_{m+1}} \\
        &= j_1\beta^{-1} + \beta^{-1} t_{k_2\cdots k_{m}}^{j_2\cdots j_m} +\beta^{-(1+k_2+\cdots + k_{m})}t_{k_{m+1}}^{j_{m+1}} \\
        &= j_1\beta^{-1} + \beta^{-1} t_{k_2\cdots k_{m+1}}^{j_2 \cdots j_{m+1}},
        \end{align*}
        where in the second equality we used the induction step. 

        Now let us prove \eqref{fejl2}, again by induction in $m\geq 2$.  If $m=2$ we have 
        \begin{align*}t_{ik_2}^{j_1j_2}&=t_{i}^{j_1} +\beta^{-i} t_{k_2}^{j_2} =\sum_{j=0}^{i-2}\frac{a_j}{\beta^{j+1}} +\frac{j_1}{\beta^i} +\beta^{-i} t_{k_2}^{j_2}=\sum_{j=0}^{i-2}\frac{a_j}{\beta^{j+1}} +\beta^{-(i-1)}\Big (\frac{j_1}{\beta} +\beta^{-1} t_{k_2}^{j_2}\Big )\\
        &=\sum_{j=0}^{i-2}\frac{a_j}{\beta^{j+1}} +\beta^{-(i-1)}t_{1k_2}^{j_1j_2}. 
        \end{align*}
Now let $m\geq 2$. We have 
\begin{align*}
    t_{ik_2\cdots k_{m+1}}^{j_1\cdots j_{m+1}}&=t_{ik_2\cdots k_{m}}^{j_1\cdots j_{m}}+\beta^{-(i+k_2+\cdots k_m)} t_{k_{m+1}}^{j_{m+1}}=\sum_{j=0}^{i-2}\frac{a_j}{\beta^{j+1}} +\beta^{-(i-1)}t_{1k_2\cdots k_{m}}^{j_1\cdots j_{m}} +\beta^{-(i+k_2+\cdots +k_m)} t_{k_{m+1}}^{j_{m+1}}\\
    &=\sum_{j=0}^{i-2}\frac{a_j}{\beta^{j+1}} +\beta^{-(i-1)}t_{1k_2\cdots k_{m+1}}^{j_1\cdots j_{m+1}},
\end{align*}
where in the second equality we used the induction step. 
\end{proof}

We are now ready to prove Lemma~\ref{lemmacrux}.
\subsection{Proof of Lemma~\ref{lemmacrux}}
The functions appearing below were introduced in Definition \ref{Giz}. We will use induction and for the initial step we put $m=1$. 

First, let $k_1=1$, then for $0 \leq j_1 \leq a_0-1$:
\begin{align*}
    G_{s,1}^{j_1}(x) &= \frac{\beta}{\Vert B_s \Vert_{L^1([0,1])}} B_s \Big(\beta\big(x - t_1^{j_1}\big)\Big) \chi_{\big[t_1^{j_1},t_1^{j_1+1} \big]}(x) \\
    &= \frac{\beta}{\Vert B_s \Vert_{L^1([0,1])}} B_s \bigg(\beta\bigg(x - \frac{j_1}{\beta}\bigg)\bigg) \chi_{\big[j_1\beta^{-1},(j_1+1)\beta^{-1} \big]}(x).
\end{align*}
 Thus by applying the operator $\mathscr{P}$, we get for a.e. $x \in [0,1]$:
\begin{align*}
    &\beta\big(\mathscr{P}G_{s,1}^{j_1}\big)(x) = \sum_{j=0}^{a_0-1} G_{s,1}^{j_1}\bigg( \frac{x+j}{\beta}\bigg)\chi_{[0,1]}(x) +  G_{s,1}^{j_1}\bigg( \frac{x+a_0}{\beta}\bigg) \chi_{[0,1]}(x) \\
    &= \sum_{j=0}^{a_0-1} \frac{\beta}{\Vert B_s \Vert_{L^1([0,1])}} B_s \bigg(\beta\bigg(\frac{x+j}{\beta} - \frac{j_1}{\beta}\bigg)\bigg) \chi_{\big[j_1\beta^{-1},(j_1+1)\beta^{-1} \big]}\bigg( \frac{x+j}{\beta}\bigg) \chi_{[0,1]}(x)\\
    &\qquad + \frac{\beta}{\Vert B_s \Vert_{L^1([0,1])}} B_s \bigg(\beta\bigg(\frac{x+a_0}{\beta} - \frac{j_1}{\beta}\bigg)\bigg) \chi_{\big[j_1\beta^{-1},(j_1+1)\beta^{-1} \big]}\bigg( \frac{x+a_0}{\beta}\bigg) \chi_{[0,1]}(x) \\
    &= \sum_{j=0}^{a_0-1} \frac{\beta}{\Vert B_s \Vert_{L^1([0,1])}} B_s \big(x+j -j_1\big) \chi_{[j_1-j,j_1-j+1 ]}(x)\chi_{[0,1]}(x) \\
    &\qquad +  \frac{\beta}{\Vert B_s \Vert_{L^1([0,1])}} B_s \big(x+a_0 - j_1\big) \chi_{[j_1-a_0,j_1-a_0+1 ]}(x)\chi_{[0,1]}(x).
\end{align*}
Since $0 \leq j_1 \leq a_0-1$ it follows that the second term is zero a.e.. Also, the only term in the sum which is different from zero a.e. is when $j=j_1$, hence
    \begin{align}
        \big(\mathscr{P}G_{s,1}^{j_1}\big)(x) &= \frac{B_s(x)}{\Vert B_s\Vert_{L^1([0,1])}}\chi_{[0,1]}(x)= H_{sn+1}(x). \label{eq:mathscrPGnequal1}
    \end{align}
    
    Next, let $k_1=i$ for $i \in \{2,\ldots,n\}$. Notice, that this implies that $a_{i-1} \neq 0$, since otherwise we would have omitted $k_1=i$. Then for $0\leq j_1 \leq a_{i-1}-1$: 
    \begin{align*}
        G^{j_1}_{s,i} (x)&= \frac{\beta^i}{\Vert B_s \Vert_{L^1([0,1])}}B_s\Big( \beta^i \big( x - t_i^{j_1} \big) \Big)\chi_{\big[t_i^{j_1},t_i^{j_1+1} \big]}(x) \\
        &=\frac{\beta^i}{\Vert B_s \Vert_{L^1([0,1])}}B_s\bigg( \beta^i \bigg( x - \sum_{k=0}^{i-2}\frac{a_{k}}{\beta^{k+1}} -  \frac{j_1}{\beta^i}  \bigg) \bigg) \chi_{\big [\sum_{k=0}^{i-2}\frac{a_{k}}{\beta^{k+1}}+ \frac{j_1}{\beta^i},\, \sum_{k=0}^{i-2}\frac{a_{k}}{\beta^{k+1}}+ \frac{j_1+1}{\beta^i}\big ]}(x)
    \end{align*}
and 
\begin{align*}
    &\beta\big(\mathscr{P}G_{s,i}^{j_1}\big)(x) = \sum_{j=0}^{a_0-1} G_{s,i}^{j_1}\bigg( \frac{x+j}{\beta}\bigg) \chi_{[0,1]}(x)+  G_{s,i}^{j_1}\bigg( \frac{x+a_0}{\beta}\bigg) \chi_{[0,1]}(x) \\
    &= \sum_{j=0}^{a_0-1} \frac{\beta^i}{\Vert B_s \Vert_{L^1([0,1])}}B_s\bigg( \beta^i \bigg( \frac{x+j}{\beta} - \frac{a_0}{\beta} - \frac{a_1}{\beta^2} - \cdots - \frac{a_{i-2}}{\beta^{i-1}} - \frac{j_1}{\beta^i} \bigg) \bigg) \\
        &\qquad \cdot \chi_{[a_0 + \frac{a_1}{\beta} + \cdots + \frac{a_{i-2}}{\beta^{i-2}} + \frac{j_1}{\beta^{-1}}-j,a_0 + \frac{a_1}{\beta} + \cdots + \frac{a_{i-2}}{\beta^{i-2}} + \frac{j_1+1}{\beta^{i-1}}-j]}(x) \chi_{[0,1]}(x) \\
    &\qquad + \frac{\beta^i}{\Vert B_s \Vert_{L^1([0,1])}}B_s\bigg( \beta^{i-1} \bigg( x - \frac{a_1}{\beta} - \cdots - \frac{a_{i-2}}{\beta^{i-2}} - \frac{j_1}{\beta^{i-1}} \bigg) \bigg) \\
        &\qquad \cdot \chi_{[\frac{a_1}{\beta} + \cdots + \frac{a_{i-2}}{\beta^{i-2}} + \frac{j_1}{\beta^{i-1}},\frac{a_1}{\beta} + \cdots + \frac{a_{i-2}}{\beta^{i-2}} + \frac{j_1+1}{\beta^{i-1}}]}(x) \chi_{[0,1]}(x) .
\end{align*}
Since $0 \leq j \leq a_0-1$, the sum over $j$ is zero almost-everywhere and thus we get
\begin{align*}
    \Big(\mathscr{P}G_{s,i}^{j_1}\Big)(x) 
    =& \frac{\beta^{i-1}}{\Vert B_s \Vert_{L^1([0,1])}} B_s \Big( \beta^{i-1} \big(x -  \frac{a_1}{\beta} - \cdots - \frac{a_{i-2}}{\beta^{i-2}} - \frac{j_1}{\beta^{i-1}} \big)\Big)\\
    &\times\chi_{[\frac{a_1}{\beta} + \cdots + \frac{a_{i-2}}{\beta^{i-2}} + \frac{j_1}{\beta^{i-1}},\frac{a_1}{\beta} + \cdots + \frac{a_{i-2}}{\beta^{i-2}} + \frac{j_1+1}{\beta^{i-1}}]}(x) 
\end{align*}
Another $(i-1)$ applications of $\mathscr{P}$ (while noticing that each time only one term is different from zero a.e., either one in the sum if $a_\nu < a_0$ or the last term if $a_\nu = a_0$, for $\nu \in \{1,\ldots,n-1\}$) and using \eqref{eq:mathscrPGnequal1} (since $a_{i-1} \leq a_0$), gives
\begin{align*}
    \Big(\mathscr{P}^iG_{s,i}^{j_1}\Big)(x) =  H_{sn+1}(x),
\end{align*}
which finishes the basis step with $m=1$ of our induction proof. 

Now we assume that the equality is true for $m=\ell$ and let $m=\ell+1$. We first consider the case when $k_1=1$. By Lemma~\ref{lem:splittingofpoints0to1} we may write
\begin{align*}
        &G_{s,1k_2\cdots k_{\ell+1}}^{\bj_{\ell+1}} (x) = \frac{\beta^{1+k_2+\cdots + k_{\ell+1}}}{\Vert B_s \Vert_{L^1([0,1])}}B_s \Big( \beta^{1+k_2+\cdots+k_{\ell+1}} \Big(x-t_{1k_2\cdots k_{\ell+1}}^{\bj_{\ell+1}}\Big)\Big) \chi_{\big[t_{1k_2\cdots k_{\ell+1}}^{\bj_{\ell+1}},t_{1k_2 \cdots k_{\ell+1}}^{\bj_\ell,j_{\ell+1}+1}\big]}(x) \\
        &= \frac{\beta^{1+k_2+\cdots+ k_{\ell+1}}}{\Vert B_s \Vert_{L^1([0,1])}}B_s \bigg( \beta^{1+k_2+\cdots+k_{\ell+1}} \bigg(x-\frac{j_1}{\beta }-\beta^{-1}t_{k_2\cdots k_{\ell+1}}^{j_2\cdots j_{\ell+1}}\bigg)\bigg) \\
        &\qquad \cdot \chi_{\big[j_1\beta^{-1} + \beta^{-1}t_{k_2\cdots k_{\ell+1}}^{j_2 \cdots j_{\ell+1}},j_1\beta^{-1}+\beta^{-1}t_{k_2 \cdots k_{\ell+1}}^{j_2\ldots j_{\ell}(j_{\ell+1}+1)}\big]}(x),
    \end{align*}
    for $0 \leq j_1 \leq a_0-1$. Thus for a.e. $x \in [0,1]$ we have 
    \begin{align*}
        &\beta\Big( \mathscr{P}^{1 + k_2 + \cdots + k_{\ell+1}} G_{s,1k_2\cdots k_{\ell+1}}^{\bj_{\ell+1}} \Big)(x) = \Big( \mathscr{P}^{k_2 + \cdots + k_{\ell+1}} \Big( \beta \mathscr{P} G_{s,1k_2\cdots k_{\ell+1}}^{\bj_{\ell+1}} \Big)\Big)(x) \\
        &= \mathscr{P}^{k_2+\cdots +k_{\ell+1}}\bigg(\sum_{j=0}^{a_0-1} G_{s,1k_2\cdots k_{\ell+1}}^{\bj_{\ell+1}}\bigg( \frac{x+j}{\beta}\bigg)\chi_{[0,1]}(x) + G_{s,1k_2\cdots k_{\ell+1}}^{\bj_{\ell+1}}\bigg( \frac{x+a_0}{\beta}\bigg) \chi_{[0,1]}(x)\bigg) \\
        &= \Big(\mathscr{P}^{k_2+\cdots +k_{m+1}}G_{s,k_2\cdots k_{m+1}}^{j_2\cdots j_{m+1}}\Big)(x) \\
        &=H_{sn+1}(x),
    \end{align*}
    where in the third equality we have used that the second term is zero a.e., and that in the sum, only the term $j=j_1$ is not zero a.e.. Furthermore, in the last equality we have used the induction hypothesis.

    Next let $i \in \{2,\ldots,n\}$ and $k_1=i$. Recall, that this implies that $a_{i-1} \neq 0$. By Lemma~\ref{lem:splittingofpoints0to1} we may write
    \begin{align*}
        &G_{s,ik_2\cdots k_{\ell+1}}^{\bj_{\ell+1}} (x) = \frac{\beta^{i+k_2+\cdots + k_{\ell+1}}}{\Vert B_s \Vert_{L^1([0,1])}}B_s \Big( \beta^{i+k_2+\cdots+k_{\ell+1}} \Big(x-t_{ik_2\cdots k_{\ell+1}}^{\bj_{\ell+1}}\Big)\Big) \chi_{\big[t_{ik_2\cdots k_{\ell+1}}^{\bj_{\ell+1}},t_{ik_2 \cdots k_{\ell+1}}^{\bj_\ell,j_{\ell+1}+1}\big]}(x) \\
        &= \frac{\beta^{i+k_2+\cdots + k_{\ell+1}}}{\Vert B_s \Vert_{L^1([0,1])}}B_s \bigg( \beta^{i+k_2+\cdots+k_{\ell+1}} \bigg(x-\sum_{m=0}^{i-2}\frac{a_m}{\beta^{m+1}} -\beta^{-(i-1)}t_{1 k_2\cdots k_{\ell+1}}^{\bj_{\ell+1}}\bigg)\bigg) 
        \\
        &\qquad \cdot \chi_{\big[\sum_{m=0}^{i-2}\frac{a_m}{\beta^{m+1}} +{\beta^{-(i-1)}t_{1 k_2\cdots k_{\ell+1}}^{\bj_{\ell+1}}},\sum_{m=0}^{i-2}\frac{a_m}{\beta^{m+1}} + {\beta^{-(i-1)}t_{1 k_2\cdots k_{\ell+1}}^{\bj_{\ell},j_{\ell+1}+1}}\big]}(x),
        \end{align*}
        for $0 \leq j_1 \leq a_{i-1}-1$. Thus for a.e. $x \in [0,1]$ we have
        \begin{align*}
        &\beta\Big( \mathscr{P}^{i + k_2 + \cdots + k_{\ell+1}} G_{s,ik_2\cdots k_{\ell+1}}^{\bj_{\ell+1}} \Big)(x) = \Big(\mathscr{P}^{(i-1)+k_2 + \cdots + k_{\ell+1}} \Big( \beta \mathscr{P} G_{s,ik_2\cdots k_{\ell+1}}^{\bj_{\ell+1}} \Big)\Big)(x) \\
        &= \mathscr{P}^{(i-1)+k_2+\cdots +k_{\ell+1}}\bigg(\sum_{j=0}^{a_0-1} G_{s,ik_2\cdots k_{\ell+1}}^{\bj_{\ell+1}}\bigg( \frac{x+j}{\beta}\bigg)\chi_{[0,1]}(x) + G_{s,ik_2\cdots k_{\ell+1}}^{\bj_{\ell+1}}\bigg( \frac{x+a_0}{\beta}\bigg) \chi_{[0,1]}(x)\bigg) \\
        &= \mathscr{P}^{(i-1)+k_2+\cdots +k_{m+1}}  \bigg( \frac{\beta^{i+k_2+\cdots + k_{\ell+1}}}{\Vert B_s \Vert_{L^1([0,1])}}B_s \bigg( \beta^{i+k_2+\cdots+k_{\ell+1}} \bigg(\frac{x}{\beta}- \sum_{m=1}^{i-2}\frac{a_{m}}{\beta^{m+1}}-{ \beta^{-(i-1)}t_{1k_2\cdots k_{\ell+1}}^{\bj_{\ell+1}}}\bigg)\bigg) \\
        &\qquad \cdot \chi_{\big[\frac{a_0}{\beta} + \cdots + \frac{a_{i-2}}{\beta^{i-1}}+{\beta^{-(i-1)}t_{1k_2\cdots k_{\ell+1}}^{\bj_{\ell+1}}},\frac{a_0}{\beta} + \cdots + \frac{a_{i-2}}{\beta^{i-1}}+{ \beta^{-(i-1)}t_{1k_2\cdots k_{\ell+1}}^{\bj_{\ell},j_{\ell+1}+1}}\big]}\bigg(\frac{x+a_0}{\beta}\bigg)\bigg) \\
        &= \mathscr{P}^{(i-1)+k_2+\cdots +k_{m+1}}  \bigg( \frac{\beta^{i+k_2+\cdots + k_{\ell+1}}}{\Vert B_s \Vert_{L^1([0,1])}}B_s \bigg( \beta^{(i-1)+k_2+\cdots+k_{\ell+1}} \bigg(x - \sum_{m=1}^{i-2}\frac{a_m}{\beta^{m}}-{\beta^{-(i-2)}t_{1k_2\cdots k_{\ell+1}}^{\bj_{\ell+1}}}\bigg)\bigg) \\
        &\qquad \cdot \chi_{\big[\sum_{m=1}^{i-2}\frac{a_m}{\beta^{m}} +{\beta^{-(i-2)}t_{1 k_2\cdots k_{\ell+1}}^{\bj_{\ell+1}}},\sum_{m=1}^{i-2}\frac{a_m}{\beta^{m}}+{ \beta^{-(i-2)}t_{1 k_2\cdots k_{\ell+1}}^{\bj_{\ell},j_{\ell+1}+1}}\big]}(x)\bigg),
    \end{align*}
    where in the third equality, we have used that the sum over $0\leq j\leq a_0-1$ equals zero a.e.. Applying $\mathscr{P}$ another $(i-2)$ times (where each time only one term is different from zero a.e., either one in the sum if $a_\nu < a_0$ or the last term if $a_\nu = a_0$, for $\nu\in \{1,\ldots,n-1\}$) we get back to $k_1=1$, which we have already considered.
\qed 

\subsection{Proof of Lemma \ref{lemma-relabeling}}\label{sec-relab}
{
By definition, there exists $j_1<a_{{k_1}-1}$ such that 
\begin{align*}
    t_{k_1k_2\cdots k_m}^{j_1j_2\cdots j_m}=a_0/\beta +a_1/\beta^2+\cdots + a_{k_1-2}/\beta^{k_1-1} + j_1/\beta^{k_1} +\beta^{-{k_1}}t_{k_2\cdots k_m}^{j_2\cdots j_m}. 
\end{align*}
If ${k_1}=2$ then 
\begin{align*}
    t_{2k_2\cdots k_m}^{j_1j_2\cdots j_m}=a_0/\beta +j_1/\beta^2+\beta^{-2}t_{k_2\cdots k_m}^{j_2\cdots j_m}  
\end{align*}
hence $\tilde{t}=t_{1k_2\cdots k_m}^{j_1j_2\cdots j_m}$. Thus we may assume that ${k_1}\geq 3$. 

If $a_1,\ldots ,a_{{k_1}-2}<a_0$ then 
\begin{align*}
    \tilde{t}=a_1/\beta+\cdots + a_{{k_1}-2}/\beta^{{k_1}-2} + j_1/\beta^{{k_1}-1} +\beta^{-({k_1}-1)}t_{k_2\cdots k_m}^{j_2\cdots j_m}=t_{1\times ({k_1}-1) k_2\cdots k_m}^{a_1\cdots a_{{k_1}-2}j_1\cdots j_m} 
\end{align*}
where again the notation $1\times ({k_1}-1)$ means $1$ repeated $k_1-1$ times. 
Now assume that there exists $1\leq r\leq {k_1}-2$ such that $a_r=a_0$. Let $r_1$ be the smallest such index. This means that $a_1,a_2,\ldots,a_{r_1-1}<a_0$ and 
\begin{equation*}
    a_1/\beta +\cdots +a_{r_1-1}/\beta^{r_1-1}=t_{1\times(r_1-1)}^{a_1\cdots a_{r_1-1}}.
\end{equation*}
Then with $a_{r_1}=a_0$ we have 

\begin{align*}
    \tilde{t}&=t_{1\times(r_1-1)}^{a_1\cdots a_{r_1-1}} +\beta^{-(r_1-1)} \big (a_{0}/\beta +a_{r_1+1}/\beta^2+\cdots  +a_{{k_1}-2}/\beta^{{k_1}-r_1-1}+ j_1/\beta^{{k_1}-r_1}\big )\\
    &\quad +\beta^{-({k_1}-1)}t_{k_2\cdots k_m}^{j_2\cdots j_m}.
\end{align*}
We need to control the parenthesis. Parry's lexicographic condition implies: 
\begin{equation*}
    [a_{r_1},a_{r_1+1},\ldots ,a_{{k_1}-2}, j_1,0,\ldots ]\prec [a_0,a_1,\ldots, a_{n-1},0,\ldots ]. 
\end{equation*}
In the ``shortest case scenario", we have a ``best saturation" of the above inequality where $a_{r_1+1}=a_1$, $a_{r_1+2}=a_2$, ..., $a_{{k_1}-2}=a_{{k_1}-r_1-2}$ and $j_1<a_{{k_1}-1}\leq a_{{k_1}-r_1-1}$ so 
\begin{equation*}
    a_{0}/\beta +a_{r_1+1}/\beta^2+\cdots  +a_{s-2}/\beta^{k_1-r_1-1}+ j_1/\beta^{k_1-r_1}=t_{k_1-r_1}^{j_1}
\end{equation*}
and 
\begin{equation*}
    \tilde{t}=t_{1\times(r_1-1)}^{a_1\cdots a_{r_1-1}} +\beta^{-(r_1-1)}\Big (t_{{k_1}-r_1}^{j_1} +\beta^{-({k_1}-r_1)}t_{k_2\cdots k_m}^{j_2\cdots j_m}\Big )=t_{1\times(r_1-1),{k_1}-r_1,k_2\cdots k_m}^{a_1\cdots a_{r_1-1}j_1j_2\cdots j_m}.
\end{equation*}

If the best saturation does not happen, for example when $a_{r_1+1}<a_1$, we have 
\begin{equation*}
    a_{0}/\beta +a_{r_1+1}/\beta^2+\cdots  +a_{{k_1}-2}/\beta^{{k_1}-r_1-1}+ j_1/\beta^{{k_1}-r_1}=t_{2}^{a_{r_1+1}}+\beta^{-2}\big ( a_{r_1+2}/\beta +\cdots + j_1/\beta^{{k_1}-r_1-2}\big )
\end{equation*}
and we continue by induction using Parry's condition until the parenthesis is exhausted. 

Now let us prove the other way around, that is if $a_0/\beta +\beta^{-1} t_{\bk_m}^{\bj_m}<1$ then it must equal some $t_{\tilde{\bk}_p}^{\tilde{\bj}_p}$. Let us start with $k_1=1$, i.e.  
\begin{equation*}
    a_0/\beta +\beta^{-1} t_{1k_2\cdots k_m}^{j_1j_2\cdots j_m}=a_0/\beta +j_1/\beta^2 +\beta^{-2} t_{k_2\cdots k_m}^{j_2\cdots j_m}.
\end{equation*}
If $j_1>a_1$ then this number is larger than $1$. If $j_1<a_1$ then we can write it as $t_{2k_2\cdots k_m}^{j_1j_2\cdots j_m}$. If $j_1=a_1$ then we can write it as $t_{3k_2\cdots k_m}^{0j_2\cdots j_m}$. 

If $k_1=2$ then 
\begin{equation*}
    a_0/\beta +\beta^{-1} t_{2k_2\cdots k_m}^{j_1j_2\cdots j_m}=a_0/\beta +a_0/\beta^2 +j_1/\beta^3 +\beta^{-3} t_{k_2\cdots k_m}^{j_2\cdots j_m},\quad j_1<a_1.
\end{equation*}
First of all, if $a_1<a_0$ this number is larger than $1$. If $a_1=a_0$ and $j_1<a_2$ then this number can be written as $t_{3k_2\cdots k_m}^{j_1j_2\cdots j_m}$. If $j_1>a_2$ the number is larger than $1$, and if $j_1=a_2$ then it can be written as $t_{4k_2\cdots k_m}^{0j_2\cdots j_m}$. 
When $k_1\geq 3$ the argument is similar.

\qed }

\section{Euler-Bernoulli approximation formula}\label{sec:eulerbernoulliformula}
We refer the reader to \cite{CS} for proofs of the results in this section. Define ${B}_0(x)=1$ and ${B}_1(x)=x-1/2$ for $x\in [0,1)$ and $\Z$-periodically extend both $B_0$ and $B_1$ to all of $\mathbb{R}$; the periodic functions are denoted by $\widetilde{B}_0$ and $\widetilde{B}_1$. Let $r\geq 2$ and define 
\begin{align*}
\widetilde{B}_r(x)\coloneqq -r!\sum_{m\neq 0} e^{2\pi i m x}\, (2\pi i m)^{-r},\quad \frac{d\widetilde{B}_r}{dx}(x)= r\widetilde{B}_{r-1}(x),\quad 0<x<1.
\end{align*}
Then $\widetilde{B}_r$ are $\mathbb{Z}$-periodic functions which coincide with the Bernoulli polynomials $B_r$ on $(0,1)$ for $r \in \mathbb{N}$. If $\widetilde{F}\in C^N([0,1])$ with $N\geq 1$, the Euler-Bernoulli approximation formula reads as:
\begin{equation}\label{cdh22}
    \begin{aligned}
    \widetilde{F}(x)=   \int_0^1 \widetilde{F}(t) \d t &+\sum_{s=1}^N \big ( \widetilde{F}^{(s-1)}(1)-\widetilde{F}^{(s-1)}(0)\big ) \frac{\widetilde{B}_s(x)}{s!}  \\
     &-\int_{0}^{1}\, \widetilde{F}^{(N)}(u)\, \frac{\widetilde{B}_N(x-u)}{N!} \d u,\quad 0< x< 1.  
    \end{aligned}
\end{equation}
This can be rescaled to general intervals. If $F \colon [a,b]\to \R$ belongs to  $C^N([a,b])$ we can define $\widetilde{F}(x)=F\big (a(1-x)+bx\big )$ with $0\leq x\leq 1$. Then for all $a<y<b$ we have
\begin{equation}\label{BAF}
    \begin{aligned}
     F(y)&=   (b-a)^{-1}\int_a^b F(u) \d u +\sum_{s=1}^N (b-a)^{s-1}\big ( F^{(s-1)}(b)-F^{(s-1)}(a)\big ) \frac{\widetilde{B}_s\big( \frac{y-a}{b-a}\big)}{s!} \\
     &\qquad -(b-a)^N\int_{0}^{1}\, F^{(N)}\big (a(1-u)+bu\big )\, \frac{\widetilde{B}_N\big (\frac{y-a}{b-a}-u \big )}{N!} \d u. 
    \end{aligned}
\end{equation}
By this rescaled Bernoulli approximation formula applied to each interval in the splitting, i.e. $b=t_{\bk_m}^{\bj_m} + \beta^{- \vert \bk_m \vert}$ and $a=t_{\bk_m}^{\bj_m}$ we get
\begin{equation}\label{aug22}
    \begin{aligned}
     F(y)&=  \beta^{|\bk_m|} \int_{t_{\bk_m}^{\bj_m} }^{t_{\bk_m}^{\bj_m} + \beta^{- \vert \bk_m \vert} } F(u) \d u \\
     &\qquad +\sum_{s=1}^N \beta^{-(s-1)|\bk_m|}\big ( F^{(s-1)}(t_{\bk_m}^{\bj_m} + \beta^{- \vert \bk_m \vert} )-F^{(s-1)}(t_{\bk_m}^{\bj_m} )\big ) \frac{\widetilde{B}_s\Big( \beta^{|\bk_m|}\big (y-t_{\bk_m}^{\bj_m}\big )\Big)}{s!} \\
     &\qquad -\beta^{-N|\bk_m|}\int_{0}^{1}\, F^{(N)}\big (t_{\bk_m}^{\bj_m} (1-u)+t_{\bk_m}^{\bj_m} + \beta^{- \vert \bk_m \vert} u\big )\, \frac{\widetilde{B}_N\Big( \beta^{|\bk_m|}\big (y-t_{\bk_m}^{\bj_m}\big )-u\Big)}{N!} \d u \\
     &=   \beta^{|\bk_m|} \int_{t_{\bk_m}^{\bj_m} }^{t_{\bk_m}^{\bj_m} + \beta^{- \vert \bk_m \vert} } F(u) \d u  \\
     &\qquad +\sum_{s=1}^N \beta^{-(s-1)|\bk_m|}\big ( F^{(s-1)}(t_{\bk_m}^{\bj_m} + \beta^{- \vert \bk_m \vert} )-F^{(s-1)}(t_{\bk_m}^{\bj_m} )\big ) \frac{\widetilde{B}_s\Big( \beta^{|\bk_m|}\big (y-t_{\bk_m}^{\bj_m}\big )\Big)}{s!}\\
     &\qquad +\mathcal{O}\Big( \beta^{-|\bk_m|N}\Vert F^{(N)}\Vert_{C([0,1])}\Big),\quad \forall y\in \big [t_{\bk_m}^{\bj_m}\, ,\, t_{\bk_m}^{\bj_m} + \beta^{- \vert \bk_m \vert}\big ].
    \end{aligned}
    \end{equation} 
        
    \section{Proofs of lemmas regarding bounded variation}
    \subsection{Proof of Lemma \ref{lem1}}\label{sec-lem1}

    Let us denote by $X_1=\W f -g_1$. Consider a partition $0=x_0<x_1<\cdots <x_m=1$ as in \eqref{beve}. There exists some $1\leq j_0\leq m-1$ such that $x_{j_0}\leq b_1<x_{j_0+1}$ which leads to  
\begin{align*}
\sum_{j=0}^{m-1}\big \vert X_1(x_{j+1})-X_1(x_j)\vert &=\sum_{j=0}^{j_0-1}\big \vert (\W f) (x_{j+1})-(\W f)(x_j)\vert \\
&\quad + |X_1(x_{j_0+1})-X_1(x_{j_0})|+\sum_{j=j_0+1}^{m-1}\big \vert (\W f) (x_{j+1})-(\W f)(x_j)\vert .
\end{align*}

We have $f\big ((a_0+x_{j_0+1})/\beta\big )=g_1\big (x_{j_0+1}\big )=0$ and $g_1\big (x_{j_0}\big )=f(1)\beta^{-1}$ hence 
\begin{align*}
X_1(x_{j_0+1})-X_1(x_{j_0})&=(\W f)(x_{j_0+1})-(\W f)(x_{j_0})+f(1)\beta^{-1}\\
&=\beta^{-1}\sum_{k=0}^{a_0-1} \Big (f((k+x_{j_0+1})/\beta)-f((k+x_{j_0})/\beta)\Big )\\
&\quad +\beta^{-1}\Big (f(1)-f((a_0+x_{j_0})/\beta)\Big ).
\end{align*}

In the sum over $j>j_0$, the term with $k=a_0$ is absent, thus adding all three contributions we have
\begin{align*}
\sum_{j=0}^{m-1}\big \vert X_1(x_{j+1})-X_1(x_j)\big \vert &\leq \beta^{-1} \sum_{j=0}^{j_0-1}\sum_{k=0}^{a_0} \big \vert  f((k+x_{j+1})/\beta)-f((k+x_{j})/\beta)\big \vert \\
&\quad +\beta^{-1} \sum_{k=0}^{a_0-1}\big \vert f\big ((k+x_{j_0+1})/\beta\big )-f\big ((k+x_{j_0})/\beta\big )  \\
&\quad +\beta^{-1}   |f(1)-f((a_0+x_{j_0})/\beta)| \\
&\quad + \beta^{-1}\sum_{j=j_0+1}^{m-1}
\sum_{k=0}^{a_0-1} \big \vert  f((k+x_{j+1})/\beta)-f((k+x_{j})/\beta)\big \vert. 
\end{align*}

The points $y_{k,j}:=(k+x_j)/\beta$ live in the intervals $[k/\beta,(k+1)/\beta]$ and form a partition of $[0,(a_0+1)/\beta]$. We see that the largest element of this partition for which $f(y_{k,j})$ might be non zero is $(a_0+x_{j_0})/\beta$. Rearranging the sums by first summing over $k\leq a_0-1$ and after that for $k=a_0$, the right hand side of the above inequality becomes: 
\begin{align*}
& \beta^{-1} \sum_{k=0}^{a_0-1} \sum_{j=0}^{m-1}\big \vert  f((k+x_{j+1})/\beta)-f((k+x_{j})/\beta)\big \vert \\
&\quad +\beta^{-1} \sum_{j=0}^{j_0-1}\big \vert f\big ((a_0+x_{j+1})/\beta\big )-f\big ((a_0+x_{j})/\beta\big )  +\beta^{-1}   |f(1)-f((a_0+x_{j_0})/\beta)|\\
&=\beta^{-1}\big(\sum_{k=0}^{a_0-1}\sum_{j=0}^{m-1}|f(y_{k,j+1})-f(y_{k,j})| + \sum_{j=0}^{j_0-1}|f(y_{a_0,j+1})-f(y_{a_0,j})|+ |f(1)-f(y_{a_0,j_0})|\big)\\
&\leq \beta^{-1} {\rm Var}_0^1(f). 
\end{align*} \qed 

\subsection{Proof of Lemma \ref{lemmap2}} \label{sec-lemmap2}
Let $g_1(x)=f(1)\beta^{-1} \big (-b_1 +\chi_{[0,b_1]}(x)\big )$. It only differs by a global constant from the function $g_1$ from Lemma \ref{lem1}, while here $\int_0^1 g_1(x) \d x=0$. We still have  
    ${\rm Var}_0^1(\W f -g_1)\leq \beta^{-1}{\rm Var}_0^1(f)$. For $r\geq 1$ we introduce 
    \begin{equation}\label{jun1'}
    g_{r+1}(x):=(\W g_r)(x)+\big ((\W^rf)(1)-g_r(1)\big ) \beta^{-1} 
    \big (-b_1 +\chi_{[0,b_1]}(x)\big ).
    \end{equation}
    We have 
    \begin{equation*}
        \W^{r+1}f-g_{r+1}=\W (\W^rf-g_r) -\big ((\W^rf)(1)-g_r(1)\big ) \beta^{-1} \big (-b_1 +\chi_{[0,b_1]}\big )
    \end{equation*}
    hence the induction step implies 
    \begin{equation*}
        {\rm Var}_0^1 (\W^{r+1}f-g_{r+1})\leq \beta^{-1} {\rm Var}_0^1 (\W^{r}f-g_{r})\leq \beta^{-(r+1)}{\rm Var}_0^1(f).
    \end{equation*}
    In conclusion, we have just proved that the function $X_r=\W^r f-g_r$ obeys for all $r\geq 1$ that 
    \begin{equation}\label{jun2}
        {\rm Var}_0^1(X_r)\leq \beta^{-r} {\rm Var}_0^1(f)\quad \text{and}\quad  \int_0^1 X_r(x) \d x=0, 
    \end{equation}
where the last equality comes from the fact that $\W$ preserves the value of integrals. Since $\int_0^1 X_r(x) \d x=0$ then $X_r$ must change sign, thus $|X_r(x)|\leq {\rm Var}_0^1(X_r)\leq \beta^{-r}{\rm Var}_0^1(f)$ for all $x\in [0,1]$. 
    In particular, if we denote by 
    $c_r=(\W^rf)(1)-g_r(1)$ we have 
    \begin{equation}\label{jun3}
        |c_r|\leq \beta^{-r}{\rm Var}_0^1(f),\quad r\geq 1.
    \end{equation}
Let $h:=\beta^{-1} (-b_1+\chi_{[0,b_1]})$.  We can rewrite \eqref{jun1'} as 
\begin{equation*}
    g_{r+1}=\W g_r +c_r h,\quad r\geq 1.
\end{equation*}
Then for all $r\geq 2$ we have 
\begin{equation}\label{gem}
\begin{aligned} 
g_r&=\W g_{r-1} +c_{r-1} h= \W^2 g_{r-2} +c_{r-2}\W h +c_{r-1} h\\
&=\W^{r-1} g_1 +\sum_{k=1}^{r-1}c_{r-k}\W^{k-1}h.
\end{aligned}
\end{equation}
We observe that $h$ and all the functions $g_r$ are linear combinations of the functions $\chi_{[0,b_j]}$ with $0\leq j\leq n-1$, which form an invariant subspace for $\W$, see Appendix \ref{sec:invsubspace}. The restriction of $\beta \W$ to this $n$ dimensional subspace generates the matrix from \eqref{Pn}. The spectral properties of this matrix are analyzed in Lemma \ref{lemmahc1}.

Since $\int_0^1 g_1(x) \d x=\int_0^1 h(x) \d x=0$, both $g_1$ and $h$ must have a zero component on the subspace corresponding to the eigenvalue $\beta$ of the matrix $\mathcal{P}_n$ from \eqref{Pn}.  This can be seen as follows. Let for example $h(x)=c_1 u_1(x)+\tilde{h}(x)$ where $\tilde{h}(x)$ belongs to the spectral subspaces of all other eigenvalues of $\mathcal{P}_n$. Then $\W^m h-c_1 u_1 =\W^m \tilde{h}\to 0$ when $m\to\infty$. Since $\W$ preserves the integral of $h$, we must have $c_1=0$. 

Let $\delta=1$ if the algebraic  multiplicity of $\lambda_*$ is larger than $1$ (but at most $n-1$), and $\delta=0$ if $\lambda_*$ is simple. Then there exists $C>0$ such that for all $j\geq 1$ we have
\begin{equation}\label{juli1}
\max\big \{ \Vert \W^{j}g_1\Vert_{BV}\, ,\,  \Vert \W^{j}h\Vert_{BV}\big \} \leq   C\, j^{\delta (n-1)} (|\lambda_*|/\beta)^j.
\end{equation} 
This together with \eqref{jun3} applied to \eqref{gem} leads to (remember that $|f(1)|\leq \Vert f\Vert_{L^\infty}\leq {\rm Var}_0^1(f)$ because $f$ changes sign):
\begin{equation}\label{jun4}
\Vert g_r-(\beta^{-1}\mathcal{P}_n)^{r-1} g_1\Vert_{BV}\leq  C {\rm Var}_0^1(f)\sum_{k=1}^{r-1}\beta^{-(r-k)}  k^{\delta (n-1)} (|\lambda_*|/\beta)^{k}.
\end{equation}
We have the following alternatives: 
\begin{itemize}
    \item If $|\lambda_*|< 1$ then  we know from Lemma \ref{simple-eig} that all roots of $p(z)$ are simple, thus $\delta=0$ in \eqref{juli1} and \eqref{jun4} hence $\sum_{k=1}^{r-1}\beta^{-(r-k)}  (|\lambda_*|/\beta)^{k}\leq \beta^{-r}(1-|\lambda_*|)^{-1}$ and 
    \begin{equation*}
        \Vert g_r-(\beta^{-1}\mathcal{P}_n)^{r-1} g_1\Vert_{BV}\leq C {\rm Var}_0^1(f)\, (1-|\lambda_*|)^{-1}\beta^{-r};
    \end{equation*}
    \item If $|\lambda_*|=1$ then $\Vert g_r-(\beta^{-1}\mathcal{P}_n)^{r-1} g_1\Vert_{BV}\leq C {\rm Var}_0^1(f)\, \beta^{-r}\,  r^n.$
    \item If $1< |\lambda_*|< \beta$ then $\sum_{k=1}^{r-1}\beta^{-(r-k)}  k^{n-1} (|\lambda_*|/\beta)^{k}= r^{n-1} \beta^{-r}\sum_{k=1}^{r-1}|\lambda_*|^k\leq \frac{ r^{n-1}}{|\lambda_*|-1}(|\lambda_*|/\beta)^r$ and 
    \begin{equation*}
        \Vert g_r-(\beta^{-1}\mathcal{P}_n)^{r-1} g_1\Vert_{BV}\leq C {\rm Var}_0^1(f)\,\frac{ r^{n-1}(|\lambda_*|/\beta)^{r}}{|\lambda_*|-1} .
    \end{equation*}
\end{itemize}

Going back to \eqref{jun2} and using that $X_r=\W^r f-g_r$ changes sign we have 
\begin{equation*}
    \Vert \W^r f-g_r\Vert_{L^\infty}\leq {\rm Var}_0^1(\W^r f-g_r)\leq {\rm Var}_0^1(f)\, \beta^{-r}
\end{equation*}
which together with the above estimates on \eqref{jun4} and the triangle inequality we obtain \eqref{jun1}.  \qed
\subsection{Proof of Lemma \ref{lemmap1}}\label{sec-lemmap1}

Let $g=f-\int_0^1f(x)\d x$. Then $\int_0^1g(x)\d x=0$. We will now reuse some of the construction from Lemma \ref{lemmap2}. Namely, from \eqref{jun1'} we see that all $g_r$'s with $r\geq 1$ belong to the subspace generated by the $\chi_{[0,b_j]}$'s.  From \eqref{jun2}  we have ${\rm Var}_0^1(X_r)\leq \beta^{-r} {\rm Var}_0^1(g)$. Thus by putting $h_r=g_r+\chi_{[0,1]} \int_0^1 f(x)\d x $ we obtain the result.

\section{Various properties of $\beta$}

\subsection{Criteria for $p(z)$ to be a Parry-Pisot  polynomial}

We start by giving here a short alternative proof for a result of Brauer \cite{Br} which holds for all $n\geq 2$.  
\begin{lemma}\label{brauer}Assume that $n\geq 2$ and $a_0\geq \cdots \geq a_{n-1}\geq 1$ are natural numbers. Let 
\begin{equation*}
    p(z)=z^n-(a_0z^{n-1}+\cdots +a_{n-1}).
\end{equation*} 
Then $p(z)$ is a Parry-Pisot polynomial according to Definition \ref{def1}.  
\end{lemma}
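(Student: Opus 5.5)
Two things must be shown: that $\{a_j\}$ is a finite Parry sequence, and that all roots of $p$ other than $\beta$ lie in the open unit disk. The Parry property is immediate from \eqref{dec4}, which is a special case of \eqref{dec5}. Directly: for $1\le j\le n-1$, compare $[a_0,\dots,a_{n-1},0,\dots]$ with $[a_j,\dots,a_{n-1},0,\dots]$ entrywise. Since the sequence is nonincreasing, $a_{j+i}\le a_i$ for all $i$. Moreover, at position $n-j$ the shifted sequence has $0$ while the original has $a_{n-j}\ge 1$. So the two sequences cannot be equal, and the first differing entry favours the original, giving \eqref{Parrycon}. The root $\beta$ is handled by Lemma \ref{lemmahc1}.

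For the location of the other roots, the plan is to multiply by $(z-1)$, the classical trick behind Eneström--Kakeya. Put $a_n:=0$ and
\begin{equation*}
q(z):=(z-1)p(z)=z^{n+1}-(a_0+1)z^n+\sum_{k=1}^{n}(a_{k-1}-a_k)z^{n-k},
\end{equation*}
where all the coefficients $c_k:=a_{k-1}-a_k$ are $\ge 0$ and $\sum_{k=1}^n c_k=a_0$. On $|z|=1$ the triangle inequality gives
\begin{equation*}
\Big|z^{n+1}+\sum_{k}c_kz^{n-k}\Big|\le 1+a_0=|(a_0+1)z^n|.
\end{equation*}
So the non-strict form of Rouché is available. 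To make it strict, perturb to $|z|=1+\epsilon$ or $|z|=1-\epsilon$ and compare. At $|z|=1-\epsilon$ the dominant term $(a_0+1)|z|^n$ beats $|z|^{n+1}+\sum_k c_k|z|^{n-k}$ to first order in $\epsilon$. Indeed, the derivative in $\rho=|z|$ of $(a_0+1)\rho^n-\rho^{n+1}-\sum_k c_k\rho^{n-k}$ at $\rho=1$ equals $(a_0+1)n-(n+1)-\sum_k c_k(n-k)=\sum_k kc_k-1$. This is $\ge a_0-1$ and could vanish, which is delicate. So instead I would count zeros differently: show there are no roots in $|z|>1$ except $\beta$, and no roots on $|z|=1$.

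For the first part, I would first argue that $q$ has exactly $n$ zeros in $|z|\le 1$, counting $z=1$. On $|z|=R$ with $R$ slightly larger than $1$, the function $q$ has exactly one zero outside, near $\beta$. For this I would directly use $p(z)=z^n(1-\sum_k a_{k-1}z^{-k})$. For $|z|>1$ with $z\neq\beta$, a root would require $\sum_k a_{k-1}z^{-k}=1$. Rather than pursue that, the cleaner route is via $q$: if $|z|\ge 1$ and $q(z)=0$, then
\begin{equation*}
(a_0+1)|z|^n\le |z|^{n+1}+\sum_k c_k|z|^{n-k}\le |z|^{n+1}+a_0|z|^{n-1}.
\end{equation*}
Dividing by $|z|^{n-1}$ gives $|z|^2-(a_0+1)|z|+a_0\ge 0$, i.e.\ $(|z|-1)(|z|-a_0)\ge 0$. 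This shows that roots with $|z|>1$ satisfy $|z|\ge a_0$, which is not sufficient on its own. So I will instead apply Rouché to $p$ on $|z|=1$ directly in the form $|p(z)+z^n-\dots|$; honestly, the robust approach is the following.

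Rouché with $g(z)=-(a_0+1)z^n$ and $f=q-g$ on $|z|=1$: we have $|f|\le|g|$, with equality only if $z^{n+1}$ and all $c_kz^{n-k}$ with $c_k>0$ point in the same direction as $z^n$. Since $c_n=a_{n-1}\ge 1$, this forces $z^{n+1}=z^n$, i.e.\ $z=1$. Away from $z=1$ the inequality is strict. Near $z=1$, I would excise a small arc and use the factored form $q=(z-1)p$ together with $p(1)=1-\sum a_k<0\neq 0$. This yields that the zeros of $q$ in $|z|<1$, plus the zero at $1$, number $n$. Hence $p$ has $n-1$ zeros in $|z|<1$, and no zero on $|z|=1$, since any such zero would force equality in the estimate, i.e.\ $z=1$, where $p(1)\ne0$. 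The main obstacle is making this boundary Rouché argument at $z=1$ rigorous. I would handle it by applying the argument principle to $p$ on the contour $|z|=1$ indented slightly around $1$, tracking the winding of $q/g$ there explicitly.
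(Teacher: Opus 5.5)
Your proof of the Parry property is correct, and so is your argument that $z=1$ is the only zero of $q$ on $|z|=1$, so that $p$ has no zeros there. The gap is the zero count. You never establish that $q$ has exactly $n$ zeros in $|z|\le 1$; you leave the analysis at $z=1$ as ``the main obstacle''.

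The justification you give for strictness away from $z=1$ is also false. The condition that all terms point in the direction of $z^n$ characterizes $q(z)=0$, not $|f(z)|=|g(z)|$. Equality $|f|=|g|$ only needs the terms of $f$ to be aligned with each other. For example, if $a_0=\cdots=a_{n-1}=a$, then $f(z)=z^{n+1}+a$. Hence $|f|=a+1=|g|$ at every $(n+1)$-th root of unity, while $q\neq 0$ at those roots other than $1$. So the hypothesis $|f|<|g|$ of Rouch\'e fails at $n+1$ points of the circle, and excising an arc around $1$ does not repair it. To follow your route you would need the symmetric form $|q-g|<|q|+|g|$, together with an explicit computation on an outward indentation at $1$, where $q/g\approx \frac{a_0+\cdots+a_{n-1}-1}{a_0+1}(z-1)$. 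You did not carry this out.

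The simple route, which is the paper's, is the perturbation you discarded because of two slips. Put $\phi(\rho)=(a_0+1)\rho^n-\rho^{n+1}-\sum_k c_k\rho^{n-k}$, so that $|g(z)|-|f(z)|\ge\phi(|z|)$.
\begin{enumerate}
\item The circle must be $|z|=1+\epsilon$, not $|z|=1-\epsilon$. Since $\phi(1)=0$, we get $\phi(1-\epsilon)\approx-\epsilon\,\phi'(1)<0$. In fact $|f|>|g|$ at $z=1-\epsilon$, as all terms of $f$ are positive there.
\item $\phi'(1)$ cannot vanish. Telescoping gives $\sum_k k c_k=a_0+a_1+\cdots+a_{n-1}$, so $\phi'(1)=a_0+\cdots+a_{n-1}-1\ge a_0\ge 1$. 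You lost this by bounding $\sum_k kc_k\ge\sum_k c_k$, which ignores that $c_n=a_{n-1}\ge 1$ carries weight $n\ge 2$.
\end{enumerate}
Hence $|g|>|f|$ on $|z|=1+\epsilon$ for all small $\epsilon>0$. So $q$ has $n$ zeros in $|z|<1+\epsilon$ for every small $\epsilon$, and therefore $n$ zeros in the closed unit disk, one of which is $z=1$. Your argument on the circle then shows that $p$ has exactly $n-1$ zeros in the open unit disk.
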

\begin{proof}
    We already know from the analysis of \eqref{dec2} that $f(x)=x^{-n}p(x)$ (and thus $p$) has a unique and simple positive root $\beta>1$. Also, from \eqref{Parrycon}  
    we infer  that $p(z)$ is a Parry polynomial. We now need to show that the other roots of $p(z)$ belong to the open unit disk.
    
    Let 
    \begin{equation}\label{dec20}
    \begin{aligned}q(z)&=(1-z)p(z)=(1-z)\big (z^n-( a_0z^{n-1}+\cdots+ a_{n-1})\big )\\
    &=(a_0+1)z^n -\Big (z^{n+1}+(a_0-a_1)z^{n-1}+\cdots +(a_{n-2}-a_{n-1})z+a_{n-1}\Big )\\
    &=: H(z)-M(z).
    \end{aligned}
    \end{equation}
    Let us show that if $|z|=1+r$ with $r>0$ sufficiently small then $|H(z)|>|M(z)|$, hence $q$ has no zeros on that circle. Indeed, if $r$ is small we have 
    \begin{equation*}
        \vert H(z) \vert=(a_0+1)(1+nr)+\mathcal{O}(r^2)
    \end{equation*}
    and by the triangle inequality
\begin{align*}
&\vert M(z) \vert \leq 1+(n+1)r+(a_0-a_1)(1+(n-1)r)+\cdots +(a_{n-2}-a_{n-1})(1+r) +a_{n-1}+\mathcal{O}(r^2)\\
&=1+a_0 +r\Big (n+1 +(a_0-a_1)(n-1)+(a_1-a_2)(n-2)+\cdots +(a_{n-2}-a_{n-1})\Big )+\mathcal{O}(r^2).
\end{align*}
Replacing $n-k$ with $n-1$ we have 
\begin{align*}(a_0-a_1)(n-1)+(a_1-a_2)(n-2)\cdots +(a_{n-2}-a_{n-1})&\leq (n-1)(a_0-a_{n-1})
\end{align*}
and thus
\begin{align*}|M(z)|&\leq 1+a_0 +r\big (n+1 +(a_0-a_{n-1})(n-1)\big )+\mathcal{O}(r^2).
\end{align*}
Hence for small $r>0$:

\begin{align*}r^{-1}\big (|H(z)|-|M(z)|\big )&\geq  (1+a_0)n -(n+1) -(a_0-a_{n-1})(n-1)+\mathcal{O}(r)\\
& = a_0-1 +a_{n-1}(n-1)+\mathcal{O}(r) \geq  1+\mathcal{O}(r)>0.
\end{align*}
Now fix such an $r$ and consider the polynomial 
\begin{equation*}
    q_t(z)=H(z)-tM(z),\quad \vert z \vert \leq 1+r,\quad 0\leq t\leq 1,\quad q(z)=q_1(z),\quad H(z)=q_0(z).
\end{equation*}
As in Rouch{\'e}'s theorem, the number of zeros (counting algebraic multiplicities) of $q_t$ inside the disk $|z|\leq 1+r$ is given by the Cauchy integral 
\begin{equation*}
    N_t=\frac{1}{2\pi i}\oint_{|z|=1+r}\frac{q_t'(z)}{q_t(z)}\, dz
\end{equation*}
which is integer valued and (Lipschitz) continuous on $0\leq t\leq 1$, thus constant and equal to $N_0=n$. This remains true for all small enough $r$, thus $q(z)=q_1(z)$ has exactly $n$ zeros inside the closed unit disk $\vert z \vert \leq 1$. 

By construction, $z=1$ is a root for $q$ but not for $p$. Let us show that $q$ has no other roots on $\vert z \vert=1$ other than $z=1$. Indeed, if $z=e^{i\theta}$ is such a root with $-\pi<\theta\leq \pi$ then $z^{-n}q(z)=0$ and from \eqref{dec20} we get
\begin{align*}a_0+1&=\cos(\theta)+(a_0-a_1)\cos(\theta)+\cdots + (a_{n-2}-a_{n-1})\cos((n-1)\theta)+a_{n-1}\cos(n\theta)\\
&\leq \cos(\theta)+a_0
\end{align*}
which demands $\theta=0$. We conclude that $q$ thus $p$ have $n-1$ zeros inside the open unit disk.  
    \end{proof}

    \begin{lemma}\label{brauer'}Assume that $\{a_j\}_{j=0}^{n-1}$ with $n\geq 2$ are natural numbers such that 
    \begin{equation*}
        a_{n-1}\geq 1 \quad \text{and}\quad a_0\geq 2+a_1+\cdots +a_{n-1}.
    \end{equation*}
Then they form a finite Parry sequence  and  $p(z)=z^n-(a_0z^{n-1}+\cdots +a_{n-1})$ 
is Parry-Pisot according to Definition \ref{def1}. 
\end{lemma}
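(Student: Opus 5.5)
Two things must be shown: that the coefficients form a finite Parry sequence, and that all roots of $p$ other than $\beta$ lie in the open unit disk.

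\emph{Parry property.} The hypothesis gives $a_0\geq 2+a_1+\cdots+a_{n-1}>a_j$ for every $j\geq 1$. Hence the sequence is of the type \eqref{dec5} with $j=0$, and the lexicographic conditions \eqref{Parrycon} hold immediately: each shifted sequence $[a_j,a_{j+1},\ldots]$ begins with a digit strictly smaller than $a_0$. Lemma \ref{a0islarger} also gives $a_0<\beta<a_0+1$.

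\emph{Location of the other roots.} I would use Rouché's theorem on the unit circle with the splitting
\begin{equation*}
p(z)=-a_0z^{n-1}+\big(z^n-a_1z^{n-2}-\cdots-a_{n-1}\big).
\end{equation*}
On $|z|=1$ the dominant term satisfies $|a_0z^{n-1}|=a_0$. The remainder is bounded by
\begin{equation*}
\big|z^n-a_1z^{n-2}-\cdots-a_{n-1}\big|\leq 1+a_1+\cdots+a_{n-1}\leq a_0-1<a_0.
\end{equation*}
The inequality is strict, so there are no zeros on $|z|=1$. Rouché then says $p$ has the same number of zeros in the open unit disk as $a_0z^{n-1}$, namely $n-1$, counted with multiplicity.

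The remaining root lies outside the closed disk, and it must be $\beta$, since $\beta>a_0\geq 2$. Therefore the other $n-1$ roots lie in the open unit disk and $p$ is Parry-Pisot in the sense of Definition \ref{def1}.

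I do not expect a serious obstacle. The one point to check is that Rouché is applied with a strict inequality on the whole circle, and this is exactly what the margin of $2$ in the hypothesis provides. With only $a_0\geq 1+\sum_{j\geq1}a_j$, the bound would degenerate at $z=1$.
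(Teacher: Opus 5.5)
Your proof is correct and follows the paper's argument: the Parry property comes from \eqref{dec5} with $j=0$, and Rouché on $|z|=1$ with dominant term $-a_0z^{n-1}$ against $z^n-a_1z^{n-2}-\cdots-a_{n-1}$ gives $n-1$ zeros in the open disk, leaving $\beta$ as the only other root. One small correction to your closing aside: under the weaker hypothesis $a_0\geq 1+\sum_{j\geq 1}a_j$, equality in the triangle bound can only occur where $z^n=-1$, because all terms must align with $-a_{n-1}<0$, so it never occurs at $z=1$.
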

\begin{proof}
The sequence $\{a_j\}_{j=0}^{n-1}$ obeys \eqref{dec5} with $j=0$ hence $p(z)$ is a Parry polynomial. We write $p(z)=H(z)+M(z)$ where \begin{equation*}
    H(z)=-a_0z^{n-1} \quad \text{and}\quad  M(z)=z^n -a_1z^{n-2}-\cdots -a_{n-1}.
\end{equation*} 
If $|z|=1$ then 
$\vert H(z) \vert - \vert M(z) \vert\geq a_0-(1+a_1+\cdots+a_{n-1})\geq 1$ hence reasoning as in the previous Lemma we have that $p(z)$ and $H(z)$ have exactly $n-1$ zeros inside the open unit disk while $\beta>1$ is also a zero for $p(z)$, hence $\beta$ is a Pisot number. 
\end{proof}

    It turns out that if $n\leq 3$, then $p(z)$ is always Parry-Pisot: 
    \begin{lemma}\label{pisot}
For any finite Parry sequence with $n\leq 3$, $p(z)$ is a Parry-Pisot polynomial.
\end{lemma}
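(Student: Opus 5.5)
The plan is to treat $n=2$ and $n=3$ separately, in each case using the Parry condition \eqref{Parrycon} to restrict the coefficients and then locating the roots of $p$ other than $\beta$ by elementary means (Vieta's formulas, Rouché-type arguments, or Lemmas \ref{brauer} and \ref{brauer'}).

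For $n=2$, $p(z)=z^2-a_0z-a_1$ with $a_0\ge a_1\ge 1$, the last inequality coming from \eqref{Parrycon} with $j=1$. This already satisfies \eqref{dec4}, so Lemma \ref{brauer} gives the claim. Directly: the two roots are real, and their product is $-a_1$, so the second root is $\lambda_2=-a_1/\beta$. Since $\beta>a_0\ge a_1$ by Lemma \ref{a0islarger}, we get $|\lambda_2|<1$.

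For $n=3$, $p(z)=z^3-a_0z^2-a_1z-a_2$ with $a_0\ge 1$ and $a_2\ge 1$. Condition \eqref{Parrycon} gives $[a_0,a_1,a_2]\succ[a_1,a_2,0]$ and $a_0\ge a_2$ (strictly, or with equality, which is still fine). So $a_0\ge a_1$, and if $a_0=a_1$ then $a_1\ge a_2$, with $a_1=a_2$ forcing $0<a_2$, which is allowed, so this case is fine too. I will split into two cases.

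\begin{itemize}
\item[(a)] $a_0\ge a_1\ge a_2\ge 1$. This is covered by Lemma \ref{brauer}.
\item[(b)] $a_1<a_2\le a_0$, which includes the possibility $a_1=0$.
\end{itemize}

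For case (b), I will try a Rouché argument on $|z|=1$. I cannot simply use $H(z)=-a_0z^2$, since that would require $a_0>1+a_1+a_2$. Instead I will use the Brauer trick: multiply by $(1-z)$, giving
\begin{equation*}
q(z)=(a_0+1)z^3-\big(z^4+(a_0-a_1)z^2+(a_1-a_2)z+a_2\big).
\end{equation*}
Now $a_1-a_2<0$, so the triangle inequality gives $|M|\le 1+a_0-a_1+a_2-a_1+a_2$ on $|z|=1$, which may exceed $a_0+1$. That approach fails in general.

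A cleaner route is the following. The product of the roots is $a_2$, so $|\lambda_2\lambda_3|=a_2/\beta<1$ because $\beta>a_0\ge a_2$. If $\lambda_2,\lambda_3$ are complex conjugates, then $|\lambda_2|^2=a_2/\beta<1$ and we are done. If they are real, I will show $p$ has no root in $(-\infty,-1]$ and none in $[1,\beta)$. The interval $[1,\beta)$ is excluded by the uniqueness of the positive root. For $x\le -1$, write $x=-y$ with $y\ge1$; then $p(-y)=-y^3-a_0y^2+a_1y-a_2$. Since $a_1y\le a_1y^2<a_0y^2$, this is negative, so there is no real root $\le -1$. Both real roots therefore lie in $(-1,1)$.

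The main point to check is the complex-conjugate step, which needs $a_2<\beta$; this holds since $a_2\le a_0<\beta$. The real case has the elementary sign estimate above, using $a_1\le a_0$. The same two-case argument in fact handles (a) as well, so Lemma \ref{brauer} is not strictly needed.
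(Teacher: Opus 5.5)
Your proof is correct and follows essentially the same route as the paper. For $n=2$ you use the explicit second root $-a_1/\beta$ (or Brauer), and for $n=3$ you use the product-of-roots identity $|\lambda_2|^2=a_2/\beta\le a_0/\beta<1$ in the complex-conjugate case. The only variation is in the real case: the paper uses Vieta's sum $-\lambda_2-\lambda_3=\beta-a_0\in(0,1)$ for two negative roots, whereas you use the sign bound $p(-y)\le -y^3-a_2<0$ for $y\ge 1$. Your strict inequality $a_1y^2<a_0y^2$ should be non-strict when $a_0=a_1$, which is harmless.
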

\begin{proof}
If $n=2$ then we must have $a_0\geq a_1$ hence Brauer's criterion is always satisfied, but one can also directly use the formula which gives the second root $z_2=-a_1/\beta$ which shows that $|z_2|<1$. If $n=3$ we might have that $a_0\geq \max\{a_1,a_2\}$ and $a_1<a_2$, which still generate a finite Parry sequence according to Lemma \ref{a0islarger} but do not satisfy Brauer's criterion.  Nevertheless, let us show that 
\begin{equation*}
    p(z)=z^3-a_0 z^2-a_1 z-a_2
\end{equation*}
is Parry-Pisot. 
Denote its roots with $\beta>1$,  $z_2$ and $z_3$. If $z_2$ has a non-zero imaginary part then $z_3=\overline{z_2}$. We must have $\beta z_2z_3=a_2$, hence $|z_2|=|z_3|=\sqrt{a_2/\beta}\leq \sqrt{a_0/\beta}< 1$. If $z_2$ and $z_3$ are both real, then they must be negative since $\beta$ is the only positive root. We also have 
\begin{equation*}
    \beta+z_2+z_3 =a_0
\end{equation*}
hence $0<-z_2-z_3=\beta-a_0<1$ which implies that $z_{2,3}\in (-1,0)$. We note that in the ``non Brauer" case with $a_1<a_2$ we cannot have negative roots at all because for $z\in [-1,0]$ we have 
\begin{equation*}
    p(z)<-a_1 z-a_2\leq a_1-a_2<0.
\end{equation*}
\end{proof}

\begin{lemma}\label{simple-eig}
If $p(z)$ is Parry-Pisot, then it is irreducible over the rationals and all its roots are simple.
\end{lemma}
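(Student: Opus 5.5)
The plan is to use the classical argument for monic integer polynomials whose only root outside the open unit disk is a single real root $\beta>1$. The unimodular roots cannot occur here, since Parry-Pisot requires all other roots to lie strictly inside the unit disk.

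\emph{Irreducibility.} Suppose $p=gh$ with $g,h\in\mathbb{Q}[z]$ both nonconstant. By Gauss's lemma we may take $g,h\in\mathbb{Z}[z]$ monic. Exactly one factor, say $g$, has $\beta$ as a root. Every root of $h$ is then a root of $p$ different from $\beta$, so all roots of $h$ lie in the open unit disk. The constant term $h(0)$ is, up to sign, the product of the roots of $h$, so $0\leq |h(0)|<1$. Since $h(0)\in\mathbb{Z}$, this forces $h(0)=0$, so $z=0$ is a root of $p$. But $p(0)=-a_{n-1}\neq 0$ because $a_{n-1}\geq 1$. This contradiction shows $p$ is irreducible over $\mathbb{Z}$, hence over $\mathbb{Q}$ by Gauss's lemma.

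\emph{Simplicity of roots.} An irreducible polynomial over $\mathbb{Q}$, a field of characteristic zero, is separable. Indeed, $\gcd(p,p')$ divides $p$, so by irreducibility it is either $1$ or a constant multiple of $p$. The latter is impossible because $p'\neq 0$ has smaller degree than $p$. Hence $\gcd(p,p')=1$ and $p$ has no repeated roots. For $\beta$ itself this is also Lemma \ref{lemmahc1}.

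The only step needing care is the reduction to monic integer factors, which is the standard Gauss lemma. Everything else is immediate, so I expect no real obstacle.
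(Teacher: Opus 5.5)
Your proof is correct and follows essentially the same route as the paper. You reduce via Gauss's lemma to monic integer factors, get a contradiction from the factor not vanishing at $\beta$ having an integer constant term of modulus in $(0,1)$, and then deduce simplicity from $\gcd(p,p')=1$, which irreducibility forces. You are slightly more explicit than the paper in justifying $h(0)\neq 0$ via $p(0)=-a_{n-1}\neq 0$; note that the claim that \emph{exactly} one factor vanishes at $\beta$ uses the simplicity of $\beta$, which is Lemma \ref{lemmahc1}.
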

\begin{proof}
   Let us first prove that $p(z)$ cannot be written as a product of two nontrivial monic polynomials with integer coefficients. If this {were} possible, then one of the factors { would have} $\beta$ as a root, while the other factor can only have roots of absolute value obeying $0<|\lambda|<1$ hence its ``free" term equals (up to a sign)  the product of its roots, which cannot equal an integer.   

From Gauss' Lemma we have that $p(z)$ is also irreducible over the rationals. Denote by $d(z)$ the greatest common divisor of $p(z)$ and $p'(z)$. From B{\'e}zout's identity we know that $d(z)$ can be written as a linear combination of $p(z)$ and $p'(z)$, hence if $p(z)$ had a double root it would also be a root for $d(z)$, implying that the degree of $d(z)$ is at least $1$. This contradicts the irreducibility of $p(z)$ over the rationals. 
\end{proof}

\subsection{All roots of $p(z)$ can get arbitrarily close to the unit circle}\label{appE2}
The interest of this result is more that there exist Parry polynomials whose roots can get  arbitrarily close to 1 in absolute value, that is given $\epsilon$ there is a Parry polynomial all of whose roots $z$ obey $|1-|z|| < \epsilon $ .
Let us consider the polynomial $p(z)=z^n-z^{n-1}-1$ with $n\geq 3$ for which $a_0=a_{n-1}=1$ and $a_1=\cdots =a_{n-2}=0$.  

Let $f(z)=-p(z)/z^{n-1}=1+1/z^{n-1} -z$. The zeros of $f$ coincide with the zeros of $p$.  Let us estimate the position of $\beta$. We have $f(1)>0$ and 
\begin{equation*}
    f\big (1+1/\sqrt{n-1}\big )=e^{-(n-1)\ln (1+1/\sqrt{n-1})}-1/\sqrt{n-1}=-1/\sqrt{n-1} + \mathcal{O}(e^{-\sqrt{n-1}})<0
\end{equation*}
if $n$ is large enough. Thus $\beta\in (1,1+1/\sqrt{n-1})$ for $n$ large enough.

Since $a_0=1$  we have $\beta<a_0+1<2$. Thus 
if $z$ is any zero of $p(z)$ we have  $|z|\leq \beta \leq 2$ and 
\begin{equation*}
    \vert 1-z\vert =1/\vert z\vert ^{n-1}\leq 3,\quad \vert z\vert \geq 1/3^{1/(n-1)}.
\end{equation*}
This means that for large enough $n$, the zeros of $p$ obey 
\begin{equation*}
    1/3^{1/(n-1)}\leq \vert z\vert \leq 1+1/\sqrt{n-1},
\end{equation*}
thus  $\vert z \vert $ can get arbitrarily close to 1 when $n$ grows. 
\end{appendix}

\begin{Backmatter}

\begin{ack}
We are grateful for the technical assistance of A. Author.
\end{ack}

\end{Backmatter}

\end{document}